% Set this variable to control whether the report format or the journal format 
% is used
\def\coralreport{1}

% Unfortunately, there is no way to automatically switch document classes.
% You have to manually switch to article format when using the tech report 
% style.  
\documentclass[11pt]{article}
%\documentclass[matprg]{svjour}

% This is so you can use conditionals depending on whether it is a tech report 
% or not
\usepackage{ifthen}

% This is the block in which you set the report number if it is a tech report
\ifthenelse{\coralreport = 1}{
\usepackage{isetechreport}

% The report number is the same one used in the ISE tech report series

% This is the revision number (increment for each revision)

% This is the date f the original report

% This is the date of the latest revision

% Set these variables according to whether this should be a CORAL or CVCR 
% report
\coraltrue
\cvcrfalse

}{}

\usepackage{graphicx}%
\usepackage{multirow}%
\usepackage{amsmath,amssymb,amsfonts}%
\usepackage{amsthm}%
\usepackage{mathrsfs}%
\usepackage[title]{appendix}%
\usepackage{xcolor}%
\usepackage{textcomp}%
\usepackage{manyfoot}%
\usepackage{booktabs}%
\usepackage{algorithm}%
\usepackage{algorithmicx}%
\usepackage{algpseudocode}%
\usepackage{listings}%
%%%%
\theoremstyle{thmstyleone}%
\newtheorem{theorem}{Theorem}%  

\theoremstyle{thmstyletwo}%

\theoremstyle{thmstylethree}%
\newtheorem{definition}{Definition}%
\newtheorem{assumption}{Assumption}

\newtheorem{lemma}{Lemma}[section]
\newtheorem{corollary}{Corollary}[section]

\begin{document}

\title{A Preconditioned Inexact Infeasible Quantum Interior Point Method for Linear Optimization}

% You probably need different styles for authors. the tech report uses the 
% authblk package to parse the list of authors and format it for both the 
% title page and the list of authors on the first page. There are a number
% of ways of listing authors with this package, but the easiest is to list 
% them one by one. 

% \author{first author} 
% \affil{first author's affiliation}
% \author{second author} 
% \affil{second author's affiliation}

% If more than one author is listed before giving an affiliation, then all
% authors listed before each affiliation will be given that same affiliation.

% \author{first author} 
% \author{second author} 
% \affil{first and second author's affiliations}
% \author{third author} 
% \affil{third author's affiliations}

% Use \thanks to add e-mail addresses if desired 

\ifthenelse{\coralreport = 1}{

\author{Zeguan Wu\thanks{E-mail: \texttt{zew220@lehigh.edu}}}
\author{Xiu Yang\thanks{E-mail: \texttt{xiy518@lehigh.edu}}}
\author{Tamás Terlaky\thanks{E-mail: \texttt{tat208@lehigh.edu}}}
\affil{Department of Industrial and Systems Engineering, Lehigh University, USA}

%You must have this command persent or you will get weird errors with
%the tabular environment
\titlepage

}{

\author{Zeguan Wu \and Xiu Yang \and Tamás Terlaky}
\institute{Zeguan Wu\\
Department of Industrial and Systems Engineering, Lehigh University, USA\\
\email{zew220@lehigh.edu}
\and 
Xiu Yang\\
Department of Industrial and Systems Engineering, Lehigh University, USA\\
\email{xiy518@podunk.edu}
\and
Tamás Terlaky\\
Department of Industrial and Systems Engineering, Lehigh University, USA
}
\subclass{81P68, 90C05, 90C51, 65F08}

\date{\today}

}

\maketitle

\begin{abstract}
Quantum Interior Point Methods (QIPMs) have been attracting significant interests recently due to their potential of solving optimization problems substantially faster than state-of-the-art conventional algorithms. In general, QIPMs use Quantum Linear System Algorithms (QLSAs) to substitute classical linear system solvers. However, the performance of QLSAs depends on the condition numbers of the linear systems, which are typically proportional to the square of the reciprocal of the duality gap in QIPMs. To improve conditioning, a preconditioned inexact infeasible QIPM (II-QIPM) based on optimal partition estimation is developed in this work. 
We improve the condition number of the linear systems in II-QIPMs from quadratic dependence on the reciprocal of the duality gap to linear, and obtain better dependence with respect to the accuracy when compared to other II-QIPMs. Our method also attains better dependence with respect to the dimension when compared to other inexact infeasible Interior Point Methods.

\ifthenelse{\coralreport = 0}{
\bigskip\noindent
{\bf Keywords:} Quantum Computing, Linear Optimization, Interior Point Method, Preconditioning Method
}{}

\end{abstract}

%%%%%%%%%%%%%%%%%%%%%%%%%%%%%%%%%%%%%%%%%%%%%%%%%%%%%%%%%%%%%%%%%%%%%%%%%%%%%%%%%%%%%%%%%
\section{Introduction}\label{sec: intro}

% \textcolor{blue}{The Introduction section, of referenced text \cite{bib1} expands on the background of the work (some overlap with the Abstract is acceptable). The introduction should not include subheadings.
% \\
% Springer Nature does not impose a strict layout as standard however authors are advised to check the individual requirements for the journal they are planning to submit to as there may be journal-level preferences. When preparing your text please also be aware that some stylistic choices are not supported in full text XML (publication version), including coloured font. These will not be replicated in the typeset article if it is accepted. }

Linear Optimization (LO) is defined as 
optimizing a linear objective function under a set of linear constraints. The two most popular families of algorithms for solving LO problems are simplex algorithms and Interior Point Methods (IPMs) \cite{bertsimas1997introduction,roos1997theory}. Despite its efficiency for many practical problems, simplex methods may take exponentially many iterations to find an optimal solution, whereas IPMs guarantee an optimal solution in polynomial number of iterations.

The modern age of IPMs were launched by Karmarkar's projective method for LO problems \cite{karmarkar1984new}. Since then, many IPM variants have been proposed and studied for not only LO problems but also nonlinear optimization problems \cite{roos1997theory,nesterov1994interior,polik2010interior}.
Contemporary IPMs start from an interior point and progress to the optimal set by moving within a neighbourhood of an analytic curve, known as the central path. Depending on whether feasibility is satisfied by the iterates, IPMs can be categorized into feasible IPMs and infeasible IPMs.
Feasible IPMs start with a strictly feasible solution and maintain feasibility at each iterations, whereas infeasible IPMs does not require feasibility to be exactly satisfied by any iterates.
To find an $\epsilon$-approximate solution for an LO problem with $n$ variables and $m$ constraints $(m\leq n)$, feasible IPMs require $\mathcal{O}\left(\sqrt{n} \log(1/\epsilon)\right)$ IPM iterations while infeasible IPMs require $\mathcal{O}\left(n \log(1/\epsilon)\right)$ IPM iterations \cite{potra1996infeasible,roos2006full}. 

In each IPM iteration, a Newton linear system needs to be solved for the search direction, which is called the Newton direction. The Newton linear system can take different forms, including full Newton system, augmented system, and normal equation system. Specifically, the full Newton system is asymmetric and can be solved exactly using Gaussian elimination with $\mathcal{O}\left(n^3\right)$ arithmetic operations.
The augmented system is symmetric but indefinite, and it can be solved exactly using Bunch–Parlett factorization with $\mathcal{O}\left(n^3\right)$ arithmetic operations \cite{bunch1971direct}. 
The normal equation system is symmetric positive definite and can be solved exactly using Cholesky decomposition with $\mathcal{O}\left(n^3\right)$ arithmetic operations. These systems can also be solved inexactly using methods including Krylov subspace methods, which may require fewer iterations if the target accuracy is moderate.
However, the Newton directions obtained in this way are inexact, which may make the IPM iterates infeasible. Thus, the resulting IPMs are infeasible IPMs and have worse iteration complexity. On the other hand, it is possible to maintain feasibility when using inexact Newton system solutions. For example, in \cite{mohammadisiahroudi2021inexact,zeguanlcqo,augustino2023quantum}, the authors proposed an orthogonal subspace system whose inexact solutions give feasible Newton directions.

The recent development of quantum computing has triggered the study of novel quantum algorithms, including hybrid quantum-classic algorithms \cite{dalzell2023quantum}. {Many hybrid algorithms use Quantum Linear System Algorithms (QLSAs) as they can solve some linear systems faster than classical linear system algorithms \cite{harrow2009quantum, childs2017quantum, chakraborty2018power}.} {For example, based on the seminal work of the QLSA \cite{harrow2009quantum}, various approaches that improve the performance of IPMs by solving the Newton systems with QLSAs are proposed \cite{kerenidis2020quantumipm, mohammadisiahroudi2022efficient, augustino2023quantum}.} 
Although QLSAs are more efficient w.r.t. dimension, there are several issues when applying them to IPMs. One issue is that QLSAs only produce quantum states, which cannot be used by classical IPMs directly. So QLSAs are always coupled with quantum state tomography algorithms (QTAs) \cite{gilyen2018quantum}, which extract classical results from the quantum results. At each iteration of these quantum IPMs (QIPMs), QLSAs are used to solve quantized Newton systems and QTAs are used to obtain classical solutions. 
{Here, QTAs have polynomial complexity with respect to the reciprocal of their precisions. Also, such precisions should be of the same magnitude as the precision of QIPMs.} As a result, although many QIPMs attain better complexities with respect to problem size than their classical counterparts, they could have worse complexity with respect to precision.
{Another issue with QLSAs is their linear (or worse) complexity dependence on the condition number of Newton systems.} The condition number of the Newton systems arising from IPMs can increase to infinity when IPMs approach optimality. Despite the improved dependence on the problem size, the dependence on condition number indicates the necessity to precondition the Newton systems arising from QIPMs.

In this work, we propose a preconditioned II-QIPM using the preconditioning method proposed by Chai and Toh in \cite{chai2007preconditioning} and the II-QIPM framework proposed by Mohammadisiahroudi et al in \cite{mohammadisiahroudi2022efficient}.
{We prove the condition number of the preconditioned linear system is improved from $\mathcal{O} \left({1}/\mu^2\right)$ to $\mathcal{O} \left({1}/\mu\right)$, where $\mu$ is the corresponding central path parameter.}
Our algorithm demonstrates $\mathcal{O}(1/\epsilon^3)$ speed-up compared with the II-QIPM proposed in \cite{mohammadisiahroudi2022efficient} when the iterative refinement method is not used.

\section{Preliminaries}\label{sec: Preliminary}
%%%%%%%%%%%%%%%%%%%%%%%%%%%%%%%%%%%%%%%%%%%%%%%%%%%%%%
In this section, we introduce the notation used in this paper. Then we give a brief introduction to classical inexact infeasible IPMs (II-IOMs) for LO problems. Finally, we introduce the II-QIPM for LO proposed in \cite{mohammadisiahroudi2022efficient}.

\subsection{Notation}
%%%%%%%%%%%%%%%%%%%%%%%%%%%%%%%%%%%%%%%%%%%%%%%%%%%%%%
In this work, we take the following conventions. Vectors are denoted by lowercase letters, matrices by capital letters. For a vector $v\in\mathbb{R}^n$, its $i$th element is denoted by $v_{[i]}$, i.e., $v=\left(v_{[1]}, \dots, v_{[n]}\right)^T$. For a vector with subscript, e.g., $v_1\in\mathbb{R}^n$, its $i$th element is denoted by $v_{1,[i]}$. 
For a diagonal matrix $M\in \mathbb{R}^{n\times n}$, its $i$th diagonal element is denoted by $M_{[i]}$.  For a diagonal matrix with subscript, e.g., $M_1\in\mathbb{R}^{n\times n}$, its $i$th diagonal element is denoted by $M_{1,[i]}$. 

For a matrix $M\in \mathbb{R}^{m\times n}$ with $m\leq n$, it has $m$ singular values. We assume that the singular values are sorted in non-increasing order starting from index $1$. The singular values of $M$ are denoted by
\begin{equation*}
    \sigma_1(M)\geq \sigma_2(M)\geq \cdots\geq \sigma_m(M),
\end{equation*}
except for the special notation $\sigma_0(M)$ denoting the smallest nonzero singular value of $M$. For a matrix $M\in\mathbb{R}^{m\times n}\  (m\leq n)$  with $k\ (k<m)$ nonzero singular values, we have
\begin{equation*}
    \sigma_1(M)\geq \cdots \geq \sigma_k(M) = \sigma_0(M) >\sigma_{k+1}(M)=\cdots=\sigma_m(M)=0.
\end{equation*}
The condition number of matrix $M$ is denoted by $\kappa(M)$ and is defined as
\begin{equation*}
    \begin{aligned}
        \kappa(M) = \frac{\sigma_1(M)}{\sigma_0(M)},
    \end{aligned}
\end{equation*}
which implies that, in this work, the condition number for any matrix, except zero matrices, is always finite.
For a nonsingular matrix $M\in \mathbb{R}^{(n+m)\times(n+m)}$ with the following 2-2 block structure
\begin{equation*}
    \begin{aligned}
        M=\begin{bmatrix}
        M_{11}&M_{12}\\M_{21}&M_{22}
        \end{bmatrix},
    \end{aligned}
\end{equation*}
where $M_{11}\in \mathbb{R}^{n\times n}$ and $M_{22}\in \mathbb{R}^{m\times m}$ and both nonsingular, we denote the Schur complements \cite{crabtree1969identity} of $M$ by
\begin{equation*}
    (M/M_{11}) = M_{22}- M_{21} M_{11}^{-1} M_{12} \text{ and } (M/M_{22}) = M_{11} - M_{12} M_{22}^{-1} M_{21}.
\end{equation*}
We use $e_n$ and $0_n$ to denote the $n$-dimensional all one vector and all zero vector, respectively. When the dimension is obvious from the context, we may use $e$ and $0$ for simplicity.
We use $I_{n\times m}$ and $0_{n\times m}$ to denote the $n\times m$ dimensional identity matrix and zero matrix, respectively. When the dimension is obvious from the context, we may write $I$ or $0$.
{We take the convention of big-$O$, big-Omega, and big-Theta notations for complexity. We also use $\tilde{}$ above these notations when ignoring the polylogarithmic overhead. For example, $\tilde{\mathcal{O}}_{n}(n^2) = \mathcal{O}(n^2{\rm poly}\log(n))$.}

%%%%%%%%%%%%%%%%%%%%%%%%%%%%%%%%%%%%%%%%%%%%%%%%

%%%%%%%%%%%%%%%%%%%%%%%%%%%%%%%%%%%%%%%%%%%%%%%%%%%%%%
\subsection{Inexact Infeasible IPMs for LO Problems}
%%%%%%%%%%%%%%%%%%%%%%%%%%%%%%%%%%%%%%%%%%%%%%%%%%%%%%
In this section, we briefly review II-IPMs for LO problems. In this work, an LO problem is defined as follows. 
\begin{definition}[LO Problem]\label{LOdef: LO}
    For vectors $b\in \mathbb{R}^m,\ c\in \mathbb{R}^n$, and matrix $A\in \mathbb{R}^{m\times n}$ with ${\rm rank}(A)=m\leq n,$ we define the primal LO problem as:
    \begin{equation}\label{LOprimal}\tag{P}
        \min_{x\in \mathbb{R}^n}\  c^T x, \ {\rm s.t.\ } 
        Ax = b,\  x \geq 0
    \end{equation}
and the dual LO problem as
\begin{equation}\label{LOdual}\tag{D}
    \max_{y\in \mathbb{R}^m, s\in \mathbb{R}^n} \  b^T y, \ {\rm s.t.\ }
    A^Ty + s = c,\ s \geq 0.
\end{equation}
\end{definition}
We make the following assumptions on problems~\eqref{LOprimal} and~\eqref{LOdual}.
\begin{assumption}\label{LOassumption: integer data}
    The input data, i.e., entries of $b$, $c$, and $A$, are integers.
\end{assumption}
\begin{assumption}\label{LOassumption: interior}
There exists a solution $(x,y,s)$ such that
\begin{equation*}
    Ax=b,\ A^Ty+s=c, \text{ and } (x,s)>0.
\end{equation*}
\end{assumption}
{Assumption~\ref{LOassumption: integer data} implicitly exists in most IPMs work. In this paper, it is explicitly used to prove Theorem~\ref{LOtheorem: main}.} 
Assumption \ref{LOassumption: interior} is not restrictive because we can apply the self-dual embedding model \cite{roos1997theory} to problem~\eqref{LOprimal} and~\eqref{LOdual} to obtain an equivalent LO problem which has a strictly feasible solution.
The set of primal-dual feasible solutions can be defined as
\begin{equation*}
    \mathcal{PD} = \left\{(x,y,s)\in \mathbb{R}^n\times\mathbb{R}^m\times\mathbb{R}^n:\ Ax=b,\ A^Ty + s = c,\ (x,s)\geq 0\right\}
\end{equation*}
and the set of interior primal-dual feasible solutions can be defined as
\begin{equation*}
    \mathcal{PD}^0 = \left\{(x,y,s)\in \mathbb{R}^n\times\mathbb{R}^m\times\mathbb{R}^n:\ Ax=b,\ A^Ty + s = c,\ (x,s)> 0\right\}.
\end{equation*}
According to the Strong Duality Theorem \cite{roos1997theory}, the set of optimal solutions can be defined as
\begin{equation*}
    \mathcal{PD}^\ast = \left\{(x,y,s)\in \mathcal{PD}:XSe = 0\right\},
\end{equation*}
where $X={\rm diag} (x)$ and $S={\rm diag} (s)$.
Next, let $\epsilon>0$, and the set of $\epsilon$-approximate solutions to problem~\eqref{LOprimal} and~\eqref{LOdual} can be defined as
\begin{equation*}
    \mathcal{PD}_{\epsilon} = \left\{ (x,y,s)\in \mathbb{R}^n\times\mathbb{R}^m\times\mathbb{R}^n: x^Ts\leq n\epsilon,\ \left\|(r_p,r_d)\right\|_2\leq\epsilon \right\},
\end{equation*}
where $r_p = b-Ax,\ r_d = c - A^T y - s$. Under Assumption~\ref{LOassumption: interior}, for any $\mu>0$, the perturbed optimality conditions
\begin{equation}\label{LOsys: kkt}
    \begin{aligned}
        Ax      &=b,\ x\geq 0\\
        A^Ty + s&=c,\ s\geq 0\\
        XSe     &=\mu e,
    \end{aligned}
\end{equation}
have a unique solution $\left(x(\mu), y(\mu), s(\mu)\right)$ that defines the primal and dual central path
\begin{equation*}
    \mathcal{CP} = \left\{ (x,y,s)\in \mathcal{PD}^0\ | \ XSe= \mu e,\ \forall \mu >0  \right\}.
\end{equation*}
IPMs apply Newton's method to solve system~\eqref{LOsys: kkt} approximately. At each iteration of inexact infeasible IPMs (II-IPMs), a candidate solution to problem~\eqref{LOprimal} and~\eqref{LOdual} is updated by moving along the solution to the full Newton system (NS), i.e., {by solving}
\begin{equation}\label{LOsys: Newton system}\tag{NS}
    \begin{aligned}
    \begin{bmatrix}
    A & 0 & 0\\
    0 & A^T & I\\
    S &0 &X
    \end{bmatrix}
    \begin{bmatrix}
    \Delta x\\
    \Delta y\\
    \Delta s
    \end{bmatrix}=\begin{bmatrix}
    r_p\\r_d\\r_c
    \end{bmatrix},
\end{aligned}
\end{equation}
where $r_c = \beta_1 \mu e - X Se$ and $\beta_1\in(0,1)$ is the central path parameter reduction factor. {Here,} $(r_p,\ r_d,\ r_c)$ are defined at each iteration but for simplicity here we omit the superscript for iteration index.
\eqref{LOsys: Newton system} can be reduced into the so-called augmented system (AS)
\begin{align}\label{LOsys: augmented system}\tag{AS}
    \begin{bmatrix}
    -D &A^T\\ A & 0
    \end{bmatrix}
    \begin{bmatrix}
    \Delta x\\\Delta y
    \end{bmatrix}=\begin{bmatrix}
    r_{dc}\\r_p
    \end{bmatrix},
\end{align}
where $D= X^{-1}S$ and $r_{dc}=r_d - X^{-1}r_c$. \eqref{LOsys: augmented system} can be further reduced into the normal equation system (NES)
\begin{equation}\label{LOsys: normal equation system}\tag{NES}
    \begin{aligned}
      AD^{-1}A^T \Delta y = AD^{-1}r_{dc} + r_p.
    \end{aligned}
\end{equation}
In each II-IPM iterations, a linear solver is used to inexactly solve for the Newton {direction} $\left(\Delta x, \Delta y, \Delta s\right)$. Depending on which linear system is solved and which linear solver is used, the requirements and properties of the inexact Newton directions might be different \cite{dembo1982inexact,kelley1995iterative, al2009convergence}.
If one uses any inexact method to solve~\eqref{LOsys: Newton system}, then all the three residual terms will be nonzero, i.e., $A\Delta x - r_p\neq 0, A^T\Delta y + \Delta s -r_d\neq 0, S\Delta x + X\Delta s - r_c\neq 0$. 
These nonzero residual terms will bring more challenges in the analysis of II-IPMs.
In \cite{al2009convergence,monteiro2003convergence}, the authors used similar techniques to eliminate the residuals for the first two equations in ~\eqref{LOsys: Newton system} and to move all the residuals into the third equation. This is helpful for convergence analysis and we discuss this in more details in the next section.
Once an inexact Newton direction is obtained, the candidate solution is updated by moving along the Newton direction while staying in a neighbourhood of the central path. In this work, for any $0<\gamma_1<1$ and $1\leq\gamma_2$, we define a central path neighborhood as
\begin{equation}
    \begin{aligned}\label{LOdef: neighbourhood}
    \mathcal{N}(\gamma_1,\gamma_2)=\left\{ (x,y,s)\in\mathbb{R}^{n+m+n}:\ (x,s)\geq0,\ xs\geq\gamma_1\mu e,\ \|(r_p,r_d)\|_2\leq\gamma_2\mu \right \},
\end{aligned}
\end{equation}
where $\mu= x^Ts/n$.

%%%%%%%%%%%%%%%%%%%%%%%%%%%%%%%%%%%%%%%%%%%%%%%%%%%%
\subsection{Inexact Infeasible QIPMs for LO Problems}\label{LOsection: IIQIPM LO}
%%%%%%%%%%%%%%%%%%%%%%%%%%%%%%%%%%%%%%%%%%%%%%%%%%%%
%
QIPMs use quantum algorithms to solve Newton linear systems. Due to the non-negligible inaccuracy in current quantum computers, the Newton directions obtained from quantum algorithms are inexact and thus might be infeasible. In \cite{mohammadisiahroudi2022efficient}, the authors designed and analyzed the first II-QIPM for LO problems. They followed the general framework of the II-IPM proposed in \cite{al2009convergence} and  used quantum algorithms to obtain the Newton directions.
Instead, if one solves \eqref{LOsys: normal equation system} using quantum algorithms to get $\Delta y$ and sets
\begin{equation*}
    \begin{aligned}
        \Delta s &= r_d - A^T  \Delta y\\
        \Delta x &= S^{-1}\left(r_c - X\Delta s\right),
    \end{aligned}
\end{equation*}
then the second and third equation in ~\eqref{LOsys: Newton system} are satisfied exactly while the residual of the first equation in ~\eqref{LOsys: Newton system} equals to the residual of \eqref{LOsys: normal equation system}.
It was recognized in \cite{al2009convergence,dembo1982inexact} that one can move the residual from the first equation to the third, which makes the first two equations in ~\eqref{LOsys: Newton system} be satisfied exactly and makes the analysis easier.
To do so, one first needs a basis for $A$. In \cite{mohammadisiahroudi2022efficient}, the authors claimed that the cost to find such a basis is dominated by the cost of solving ~\eqref{LOsys: Newton system} and thus ignored. In this work, for simplicity and w.l.o.g., we assume such a basis is known.
\begin{assumption}\label{LOassumption: A basis}
    Matrix $A$ is of the form $A = \begin{bmatrix}
        A_B& A_N
    \end{bmatrix}$ with $A_B\in \mathbb{R}^{m\times m}$ being nonsingular and the condition number of $A_B$ being no larger than that of $A$.
\end{assumption}
In fact, Assumption \ref{LOassumption: A basis} is not restrictive because we can rewrite problem \eqref{LOprimal} into the canonical form
\begin{equation*}
    \begin{aligned}
        \min_{x\in\mathbb{R}^n} c^Tx&\\
        {\rm s.t.} \begin{bmatrix}
            A\\-A
        \end{bmatrix}\begin{bmatrix}
            x\\x
        \end{bmatrix}&\leq\begin{bmatrix}
            b\\-b
        \end{bmatrix}\\
        x&\geq 0.
    \end{aligned}
\end{equation*}
Then, we can apply the self-dual embedding model to the canonical form and obtain another standard form LO problem with both Assumption \ref{LOassumption: interior} and \ref{LOassumption: A basis} satisfied. See \cite{mohammadisiahroudi2021inexact} for details.

With Assumption \ref{LOassumption: A basis}, we partition $D$ correspondingly as
\begin{equation*}
    D = \begin{bmatrix}
        D_B&0\\0&D_N
    \end{bmatrix},
\end{equation*}
where $D_B$ is an $m\times m$ matrix and $D_N$ is an $(n-m)\times (n-m)$ diagonal matrix.
We also define $\hat{A} = A_B^{-1}A$ and $\hat{b} = A_B^{-1}b$. In the general case where $A_B$ is not trivial, this one-time computation contributes $\mathcal{O}(mn^2)$ arithmetic operations to total complexity, which can also be avoided if one uses the aforementioned canonical form reformulation and applies the self-dual embedding model as described in \cite{mohammadisiahroudi2021inexact}. Following \cite{mohammadisiahroudi2022efficient}, we can rewrite \eqref{LOsys: normal equation system} as the following modified normal equation system (MNES)
\begin{equation}\label{LOsys: mnes}\tag{MNES}
    \begin{aligned}
        \left(D_B^{\frac{1}{2}}\hat{A}\right)D^{-1}\left(D_B^{\frac{1}{2}}\hat{A}\right)^T z = \left(D_B^{\frac{1}{2}}A_B^{-1}\right)\left(AD^{-1}r_{dc} + r_p\right).
    \end{aligned}
\end{equation}
We introduce the following notations:
\begin{equation*}
    \begin{aligned}
        M_{\rm NES}  &= AD^{-1}A^T\\
        M_{B} &= \left(D_B^{\frac{1}{2}}A_B^{-1}\right)^T\\
        M_{\rm MNES} &= M_B^T M_{\rm NES} M_B\\
        v_{\rm MNES} &=M_B^T\left(AD^{-1}r_{dc} + r_p\right).
    \end{aligned}
\end{equation*}
$\Delta y$ and $z$ satisfies the bijective relationship:
$\Delta y = M_{B} z$.
In  \cite{mohammadisiahroudi2022efficient}, the authors introduced the following procedure to obtain inexact Newton directions.
\begin{enumerate}
    \item[] \textbf{Step 1.} Find $z$ such that $\|r\|_2\leq\eta \sqrt{\mu/{n}}$, where $\eta\in(0,1)$ and 
    \begin{equation}\label{LOeq: r}
        \begin{aligned}
            r &= M_{\rm MNES} z - v_{\rm MNES}.
        \end{aligned}
    \end{equation}
    \item[] \textbf{Step 2.} Calculate \begin{equation}\label{LOeq: delta y and z}
        \Delta y = M_B z.
    \end{equation}
    \item[] \textbf{Step 3.} Calculate $\nu = \begin{bmatrix}
        D_B^{-\frac{1}{2}}r\\ 0
    \end{bmatrix}\in \mathbb{R}^n$.
    \item[] \textbf{Step 4.} Calculate $\Delta s = r_d - A^T  \Delta y$ and $\Delta x = S^{-1}\left(r_c - X\Delta s\right) - \nu$.
\end{enumerate}
We are not using this procedure to calculate Newton steps, but introducing this procedure helps us determine the convergence condition of our proposed algorithm.
For the Newton directions obtained in this way, the residual of ~\eqref{LOsys: Newton system} only shows up in its third equation as $X\Delta s +  S\Delta x - r_c = -S\nu.$ To get the II-QIPM converge, the authors in \cite{mohammadisiahroudi2022efficient} proposed the stopping criteria
\begin{equation*}
    \|S\nu\|_{\infty} \leq \eta \mu.
\end{equation*}
They proved the following lemma.
\begin{lemma}[Lemma 4.2 in \cite{mohammadisiahroudi2022efficient}]\label{LOlemma: convergence condition}
    If $\|r\|_2\leq\eta {\sqrt{\mu/n}}$, then $\|S\nu\|_{\infty} \leq \eta \mu$.
\end{lemma}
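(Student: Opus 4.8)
The plan is to unwind the definition of $\nu$ and reduce everything to a bound on $\|r\|_2$. Writing $D = X^{-1}S$ and partitioning according to Assumption~\ref{LOassumption: A basis}, we have $D_B = X_B^{-1}S_B$, hence $D_B^{-\frac12} = X_B^{\frac12}S_B^{-\frac12}$, and therefore $S_B D_B^{-\frac12} = S_B X_B^{\frac12} S_B^{-\frac12} = X_B^{\frac12} S_B^{\frac12} = (X_B S_B)^{\frac12}$, where all these matrices are diagonal and the square roots are the entrywise positive square roots (legitimate since $(x,s)>0$ along the central-path neighborhood). Consequently
\begin{equation*}
    S\nu = \begin{bmatrix} S_B D_B^{-\frac12} r \\ 0 \end{bmatrix} = \begin{bmatrix} (X_B S_B)^{\frac12} r \\ 0 \end{bmatrix}.
\end{equation*}

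Next I would pass from the $\infty$-norm to a product of the $2$-norms: since the zero block contributes nothing, $\|S\nu\|_\infty = \|(X_B S_B)^{\frac12} r\|_\infty \le \|(X_B S_B)^{\frac12} r\|_2 \le \big\|(X_B S_B)^{\frac12}\big\|_2 \, \|r\|_2$. Because $(X_B S_B)^{\frac12}$ is diagonal with nonnegative entries, its spectral norm is $\max_{i \in B} \sqrt{x_{[i]} s_{[i]}}$.

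The remaining step is to bound this maximum by $\sqrt{n\mu}$. Here I would use that the iterate lies in $\mathcal{N}(\gamma_1,\gamma_2)$, so $(x,s)\ge 0$ and $x^Ts = n\mu$; since every term $x_{[i]} s_{[i]}$ is nonnegative, each individual term satisfies $x_{[i]} s_{[i]} \le \sum_{j=1}^n x_{[j]} s_{[j]} = n\mu$, and in particular this holds for the basic indices. Hence $\big\|(X_B S_B)^{\frac12}\big\|_2 \le \sqrt{n\mu}$, and combining with the hypothesis $\|r\|_2 \le \eta\sqrt{\mu/n}$ yields $\|S\nu\|_\infty \le \sqrt{n\mu}\cdot \eta\sqrt{\mu/n} = \eta\mu$, as claimed.

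I do not anticipate a genuine obstacle here: the argument is essentially the identity $S_B D_B^{-1/2} = (X_B S_B)^{1/2}$ together with the elementary componentwise estimate $x_{[i]}s_{[i]}\le x^Ts = n\mu$. The only point requiring a little care is making sure the entrywise square roots are well defined, i.e. that $(x_B, s_B)>0$, which is guaranteed for any iterate in the neighborhood used by the algorithm.
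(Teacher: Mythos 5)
Your argument is correct and is essentially the standard proof of this lemma (the paper itself only cites it from \cite{mohammadisiahroudi2022efficient} without reproducing the proof): the key identity $S_B D_B^{-1/2}=(X_B S_B)^{1/2}$, the norm chain $\|\cdot\|_\infty\le\|\cdot\|_2$ with submultiplicativity, and the componentwise bound $x_{[i]}s_{[i]}\le x^Ts=n\mu$ are exactly the intended steps, and the positivity of $(x_B,s_B)$ needed for the square roots is indeed guaranteed by $xs\ge\gamma_1\mu e>0$ in $\mathcal{N}(\gamma_1,\gamma_2)$.
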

To solve \eqref{LOsys: mnes} to the desired accuracy above, i.e., $\|r\|_2\leq \eta \sqrt{\mu/n}$,  the authors in \cite{mohammadisiahroudi2022efficient} analyzed the accuracy required from the quantum algorithm. Specifically, they used the QLSA by \cite{chakraborty2018power} and the QTA by \cite{van2023quantum}.
Before introducing their conclusion on the required accuracy for quantum algorithm, {we} first compare the different meanings of accuracy between classical linear system {solves} and quantum linear system {solves}.
Different from {classical solvers}, QLSAs compute a quantum state representing the normalized vector of the classical solution and store the quantum state in the quantum machine.
We use QTAs to get a classical normalized vector close to the quantum state and then rescale it properly. {Therefore, we take into account the precision of QTAs in the analysis.}

Now we discuss the accuracy needed for the quantum algorithms. Let us consider \eqref{LOsys: normal equation system}. 
When we use classical algorithms to compute an $\epsilon_{\rm c}$-approximate solution $\Delta y$, we get $\Delta\hat{y}$ such that 
\begin{equation*}
   \epsilon_c =  \left\| M_{\rm NES} \Delta \hat{y} - AD^{-1}r_{dc} - r_p\right\|_2 = \left\|M_{\rm NES} (\Delta \hat{y}-\Delta y)\right\|_2. %\leq \epsilon_{\rm c}.
\end{equation*}
According to the definition of \eqref{LOsys: mnes} and the definition of $r$, see equation \eqref{LOeq: r}, it is obvious that
\begin{equation}\label{LOeq: error residual}
    \begin{aligned}
        \epsilon_{\rm c} = \left\|M_{\rm NES} (\Delta \hat{y}-\Delta y)\right\|_2 = \left\|\left(M_B^T\right)^{-1} r\right\|_2.
    \end{aligned}
\end{equation}
In contrast, when we use QLSAs and QTAs to solve \eqref{LOsys: normal equation system} with accuracy $\epsilon_{\rm q}$, we get a classical unit vector $\Delta \tilde{y}$ with accuracy $\epsilon_{\rm q}$ such that
\begin{equation}\label{LOexample: quantum error}
    \mathcal{E}_q = \frac{\Delta \tilde{y}}{\|\Delta \tilde{y}\|_2} - \frac{\Delta y}{\|\Delta y\|_2} = \Delta \tilde{y} - \frac{\Delta y}{\|\Delta y\|_2} \quad \text{\rm with }\left\|\mathcal{E}_q\right\|_2 = \epsilon_{\rm q}.
\end{equation}
The norm of the actual solution $\Delta y$ can be retrieved inexactly using quantum algorithms \cite{chakraborty2018power}. But this approach introduces extra inexactness to the algorithm. Instead, we consider rescaling $\Delta \Tilde{y}$ to minimize the $\ell_2$ norm of the residual of \eqref{LOsys: mnes}.
Denote the rescaling factor by $l_{\Delta \Tilde{y}}$. 
The minimization problem is a univariate convex quadratic optimization problem with unique optimal solution
\begin{equation*}
    \begin{aligned}
        l_{\Delta \Tilde{y}}^\ast = \arg\min_{l_{\Delta \Tilde{y}} \in \mathbb{R}} \ \left\|    M_{\rm MNES} M_B^{-1}  l_{\Delta \Tilde{y}} \Delta\Tilde{y} - v_{\rm MNES}\right\|_2.
    \end{aligned}
\end{equation*}
With this rescaling choice, one can show the resulting residual $r_{\rm MNES}$ satisfies
\begin{equation*}
    \begin{aligned}
        \|r_{\rm MNES}\|_2 &\leq \left\|  M_{\rm MNES} M_B^{-1}  \|\Delta y\|_2 \Delta\Tilde{y} - v_{\rm MNES} \right\|_2\\
                         &= \left\|  M_{\rm MNES} M_B^{-1}  (\Delta y + \|\Delta y\|_2 \mathcal{E}_q) - v_{\rm MNES} \right\|_2\\
                         &= \left\|  M_{\rm MNES} M_B^{-1}  \|\Delta y\|_2 \mathcal{E}_q \right\|_2\\
                         &= \left\|  M_{\rm MNES} M_B^{-1}  \mathcal{E}_q \right\|_2 \|\Delta y\|_2\\
                         &= \left\|  M_{\rm MNES} M_B^{-1}  \mathcal{E}_q \right\|_2 \left\| \left( M_{\rm MNES} M_B^{-1}\right)^{-1} v_{\rm MNES}\right\|_2\\
                         &\leq \kappa_{M_{\rm MNES} M_{B}^{-1}} \|v_{\rm MNES}\|_2 \epsilon_{q}.
    \end{aligned}
\end{equation*}
Combining the inequality and Lemma \ref{LOlemma: convergence condition}, we get the following choice of accuracy
\begin{equation}\label{LOeq: convergence condition pre}
    \begin{aligned}
        \epsilon_{\rm QLSA} = \epsilon_{\rm QTA} = \epsilon_q = \frac{\eta}{\kappa_{M_{\rm MNES} M_{B}^{-1}} \|v_{\rm MNES}\|_2} \sqrt{\frac{\mu}{n}}.
    \end{aligned}
\end{equation}
After obtaining the solution from QLSA and QTA, one updates the solution by taking a step along the Newton direction with the step length introduced in \cite{mohammadisiahroudi2022efficient}. 
Before we provide the pseudocode for the II-QIPM proposed in \cite{mohammadisiahroudi2022efficient}, we define the following quantities:
\begin{equation}\label{LOdef:omega}
    \begin{aligned}
        \omega^\ast &= \max_{\mathcal{PD}^\ast}\|(x,\ 0\times y,\ s)\|_{\infty},\\
        \omega^k    &= \|(x^k,\ 0\times y^k,\  s^k)\|_{\infty},\\
        \bar{\omega}&= \max_{k=1,\dots,\bar{k}} \omega^k,
    \end{aligned}
\end{equation}
where $(x^k,\ y^k,\ s^k)$ is the iterate in the $k$th iteration and $\bar{k}$ is the {total number of iterations}. In some sections later, the superscript for iteration index might be ignored. In those context, we simply use $\omega$ for $\omega^k$, i.e., $\omega = \omega^k$.
The pseudocode for the II-QIPM proposed in~\cite{mohammadisiahroudi2022efficient} is provided in Algorithm \ref{LOalg: II-QIPM jota}.

\begin{algorithm}
\caption{II-QIPM~\cite{mohammadisiahroudi2022efficient}}
\label{LOalg: II-QIPM jota}
\begin{algorithmic}[1]
\State{Choose $\epsilon >0$, $\gamma_1\in(0,1)$, $\gamma_2>0$, $0<\eta<\beta_1<\beta_2<1$,}
\State{$k \gets 0$, $(x^0, y^0, s^0) \gets (\omega^{\ast} e, 0 e, \omega^{\ast} e) $, and $\gamma_2 \gets \max\left\{1, \frac{\|(r_p^0,r_d^0)\|_2}{\mu^0}\right\}$,}
\While{$(x^k,y^k,s^k)\notin \mathcal{PD}_\epsilon$}
\State{$\mu^k \gets \frac{(x^k)^Ts^k}{n}$} %\label{LOalg-step:mu^k},
\State{set $\epsilon_{\rm QLSA}^k \text{ and }\epsilon_{\rm QTA}^k$ using \eqref{LOeq: convergence condition pre},} %\label{LOalg:QTA error}
\State{$(\Delta x^k,\Delta y^k,\Delta s^k) \gets$ \textbf{solve} \eqref{LOsys: mnes}($\beta_1$) by QLSA+QTA \\ 
 \hspace{4truecm} with precisions $\epsilon_{\rm QLSA}^k \text{ and }\epsilon_{\rm QTA}^k$, respectively,} %\label{LOalg-step:NES solve}
\State{$\hat{\alpha}^k\gets 
    \max \Big\{\bar{\alpha} \in [0,1] \ | \  \text{for all } \alpha \in [0,\bar{\alpha}]$\text{ we have }\\ \hspace{3.5truecm}$\big((x^k,y^k,s^k)+\alpha(\Delta x^k,\Delta y^k,\Delta s^k)\big) \in \mathcal{N}(\gamma_1, \gamma_2)$ \text{ and } \\
    \hspace{3.5truecm}$(x^k+\alpha \Delta x^k)^T(s^k+\alpha \Delta s^k)\leq \big(1-\alpha (1-\beta_2)\big)(x^k)^Ts^k \Big\}$, } %\label{LOalg-step: alpha_hat^k}
\State{$(x^{k+1},y^{k+1},s^{k+1}) \gets (x^k,y^k,s^k)+\hat{\alpha}^k(\Delta x^k,\Delta y^k,\Delta s^k)$,} %\label{LOalg-step:update solution}
\If{$\|x^{k+1},s^{k+1}\|_{\infty}>\omega^\ast$} 
 %\label{LOalg-step:check infeasibility}
% 
\State \textbf{return\ }{Primal or dual is infeasible.}
\EndIf
\State{$k \gets k+1$}
\EndWhile
\State \textbf{return\ }{$(x^k,y^k,s^k)$}
\end{algorithmic}
\end{algorithm}

While II-QIPMs have better dependence on the dimension of the problem, they have linear dependence on the condition number of the coefficient matrix of \eqref{LOsys: mnes}, which in the worst case can be larger than the condition number of the coefficient matrix of \eqref{LOsys: normal equation system}.
It is known that the condition number of normal equation systems can go to infinity for LO problems \cite{guler1993degeneracy}. In this work, we are interested in investigating how to precondition the linear systems arising from II-QIPMs and analyzing how preconditioning can improve the complexity of II-QIPMs.

%%%%%%%%%%%%%%%%%%%%%%%%%%%%%%%%%%%%%%%%%%%%%%%%%%%%%%%%%%%%%%%%%
\section{Preconditioned Quantum Interior Point Method}\label{LOsection: PQIPM}
%%%%%%%%%%%%%%%%%%%%%%%%%%%%%%%%%%%%%%%%%%%%%%%%%%%%%%%%%%%%%%%%%
In this section, we propose a new QIPM using the preconditioning method proposed by Chai and Toh in \cite{chai2007preconditioning}, analyze the condition number of the preconditioned linear system, and prove the complexity of our preconditioned II-QIPM. The preconditioning method we present here is a special adaption of the method proposed in \cite{chai2007preconditioning}. Then, we discuss how to use this method in the QIPM setting and analyze the effect of the proposed preconditioning. Finally, we discuss the complexity of the proposed preconditioned QIPM.

%%%%%%%%%%%%%%%%%%%%%%%%%%%%%%%%%%%%%%%%%%%%%%%%%%%%%%%%%%%%%%%%%
\subsection{A Special Adaptation \& Analysis of Chai \& Toh's Method}
%%%%%%%%%%%%%%%%%%%%%%%%%%%%%%%%%%%%%%%%%%%%%%%%%%%%%%%%%%%%%%%%%
%
As explained in \cite{chai2007preconditioning}, when IPMs approach optimality with duality gap $\mu$, according to the optimal partition of the problem, the diagonal entries in $D$ will go into two clusters, one is of magnitude $\mathcal{O}(\mu)$ and the other $\mathcal{O}(1/\mu)$.
Before the optimal partition is revealed, one can predict the optimal partition according to some partition rules and permute the matrix $D$ into the following form:
\begin{equation*}
    \begin{aligned}
        D=\begin{bmatrix}
        D_1&\\
        &D_2
        \end{bmatrix},
    \end{aligned}
\end{equation*}
where $D_1$ and $D_2$ are both diagonal matrices and the diagonal elements of $D_1$ and $D_2$ are $\mathcal{O}(\mu)$ and $\mathcal{O}({1}/{\mu})$ respectively. Subsequently, one can partition all the related objects in the same manner, i.e.,
\begin{equation*}
    \begin{aligned}
        A = \begin{bmatrix}
        A_1 & A_2
        \end{bmatrix}
    \end{aligned}
\end{equation*}
and
\begin{equation*}
    \begin{aligned}
        \Delta x = \begin{bmatrix}
        \Delta x_1\\
        \Delta x_2
        \end{bmatrix}, \quad r_{dc} = \begin{bmatrix}
        (r_{dc})_1\\(r_{dc})_2
        \end{bmatrix},
    \end{aligned}
\end{equation*}
etc. 
These matrices and vectors are redefined\footnote{Ultimately, when the optimal partition is correctly identified, $\Delta x_1$ and $\Delta x_2$ correspond to the index partition in the optimal partition.} at each iteration, but for simplicity we have dropped the iteration index superscripts here and in the remaining of this section.

With the aforementioned partition, it is shown in \cite{chai2007preconditioning} that ~\eqref{LOsys: augmented system} can be transformed into another linear system as specified in the following lemma.
\begin{lemma}[Lemma 4.1 in \cite{chai2007preconditioning}\footnote{Lemma \ref{LOlemma: rae} is a special case of Lemma 4.1 in \cite{chai2007preconditioning} when their matrix $E_1$ is the identity matrix.}]\label{LOlemma: rae}
The solution of ~\eqref{LOsys: augmented system} can be computed from the following reduced augmented system (RAS):
\begin{equation}\label{LOsys: RAS}\tag{RAS}
    \begin{aligned}
        \begin{bmatrix}
        H & B\\ B^T & -D_1
        \end{bmatrix} \begin{bmatrix}
        \Delta y\\ \Delta \Tilde{x}_1
        \end{bmatrix}=\begin{bmatrix}
        h\\F_1^{-1/2}(r_{dc})_1
        \end{bmatrix}
    \end{aligned},
\end{equation}
where $F_1 = I + D_1$, and
\begin{equation*}
    \begin{aligned}
         \Delta \Tilde{x}_1  &= F_1^{-1/2} \Delta x_1,\\
         B &= A_1 F_1^{-1/2},\\
         H &= A \, {\rm diag}(F_1^{-1}, D_2^{-1}) A^T = BB^T + A_2 D_2^{-1} A_2^T,\\
         h &= r_p + A \, {\rm diag}(F_1^{-1}, D_2^{-1})r_{dc}.
    \end{aligned}
\end{equation*}
\end{lemma}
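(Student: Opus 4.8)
The plan is to derive \eqref{LOsys: RAS} from \eqref{LOsys: augmented system} by a block elimination/substitution that symmetrizes the $D_1$-block. Recall \eqref{LOsys: augmented system} reads
\begin{equation*}
    \begin{bmatrix} -D & A^T\\ A & 0 \end{bmatrix}\begin{bmatrix}\Delta x\\ \Delta y\end{bmatrix}=\begin{bmatrix} r_{dc}\\ r_p\end{bmatrix},
\end{equation*}
which, under the partition $D=\mathrm{diag}(D_1,D_2)$, $A=[A_1\ A_2]$, $\Delta x=(\Delta x_1,\Delta x_2)$, $r_{dc}=((r_{dc})_1,(r_{dc})_2)$, becomes the three equations
\begin{align*}
    -D_1\Delta x_1 + A_1^T\Delta y &= (r_{dc})_1,\\
    -D_2\Delta x_2 + A_2^T\Delta y &= (r_{dc})_2,\\
    A_1\Delta x_1 + A_2\Delta x_2 &= r_p.
\end{align*}
First I would eliminate $\Delta x_2$ using the second equation: $\Delta x_2 = D_2^{-1}(A_2^T\Delta y - (r_{dc})_2)$, substitute into the third equation, and collect terms. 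This produces $A_1\Delta x_1 + A_2D_2^{-1}A_2^T\Delta y = r_p + A_2D_2^{-1}(r_{dc})_2$.

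Next, rather than also eliminating $\Delta x_1$ (which would reintroduce $D_1^{-1}$ and the ill-conditioning the method is designed to avoid), I would keep $\Delta x_1$ but rescale it. Introduce $F_1 = I + D_1$ and the change of variables $\Delta\tilde x_1 = F_1^{-1/2}\Delta x_1$, equivalently $\Delta x_1 = F_1^{-1/2}\Delta\tilde x_1$. Rewriting the first equation $-D_1\Delta x_1 + A_1^T\Delta y = (r_{dc})_1$ in terms of $\Delta\tilde x_1$, and then left-multiplying by $F_1^{-1/2}$, gives $-F_1^{-1/2}D_1F_1^{-1/2}\Delta\tilde x_1 + F_1^{-1/2}A_1^T\Delta y = F_1^{-1/2}(r_{dc})_1$, i.e.\ $B^T\Delta y - (F_1^{-1}D_1)\Delta\tilde x_1 = F_1^{-1/2}(r_{dc})_1$ with $B = A_1F_1^{-1/2}$. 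The key observation is the identity $F_1^{-1}D_1 = D_1 + I - (I+D_1)\,(I+D_1)^{-1}$... more simply, $F_1^{-1}D_1 = I - F_1^{-1}$, so to match the stated $-D_1$ in the $(2,2)$ block I would instead apply the substitution to the modified first equation obtained by adding $\Delta x_1$ to both sides: $(I - D_1)\Delta x_1$ is not quite it either; the cleanest route is to observe that \eqref{LOsys: RAS} as written has $-D_1$, not $-F_1^{-1}D_1$, in the $(2,2)$ slot, so the intended derivation is the one in \cite{chai2007preconditioning} with their $E_1 = F_1^{1/2}$: one splits $D = F_1 - I$ on the block-1 rows, moves the $-I\,\Delta x_1 = -\Delta x_1$ contribution to where it combines with the Schur complement in $H$, and is left with $-F_1$ which the symmetric scaling turns into $-I$... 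I will need to reconcile the exact bookkeeping with the statement.

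Concretely, the clean derivation: from the first equation write $A_1\Delta x_1 = $ (solve nothing), instead use $-D_1\Delta x_1 = (r_{dc})_1 - A_1^T\Delta y$, hence $\Delta x_1 = F_1^{-1}\big(-(r_{dc})_1 + A_1^T\Delta y + \Delta x_1\big)$ after adding $\Delta x_1$ to both sides of $-D_1\Delta x_1 + \cdots$, so $\Delta x_1 - F_1\Delta x_1 = \cdots$ giving $\Delta x_1 = F_1^{-1}(A_1^T\Delta y - (r_{dc})_1) + F_1^{-1}\Delta x_1$ — circular, so the right move is: treat $\Delta x_1$ as a retained variable, and substitute $\Delta x_1 = F_1^{-1/2}\Delta\tilde x_1$ directly into the reduced third equation $A_1\Delta x_1 + A_2D_2^{-1}A_2^T\Delta y = r_p + A_2D_2^{-1}(r_{dc})_2$ to get $B\Delta\tilde x_1 + (A_2D_2^{-1}A_2^T)\Delta y = h_0$, and into the rescaled first equation. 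To land exactly on the $(2,2)$ block $-D_1$ and the $(1,1)$ block $H = BB^T + A_2D_2^{-1}A_2^T$, one uses the trick of writing the reduced third equation as $BB^T\Delta y - BB^T\Delta y + B\Delta\tilde x_1 + A_2D_2^{-1}A_2^T\Delta y = h_0$ and substituting $B^T\Delta y$ from the rescaled first equation; after collecting, the $BB^T\Delta y$ term appears in $H$, the cross terms give $B\Delta\tilde x_1$, and the residual reshuffles into $h = r_p + A\,\mathrm{diag}(F_1^{-1},D_2^{-1})r_{dc}$. I would then verify the right-hand side of the first block row equals $h$ by expanding $A\,\mathrm{diag}(F_1^{-1},D_2^{-1})r_{dc} = A_1F_1^{-1}(r_{dc})_1 + A_2D_2^{-1}(r_{dc})_2 = B F_1^{-1/2}(r_{dc})_1 + A_2D_2^{-1}(r_{dc})_2$, which matches the terms produced by the two substitutions. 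Finally I would note invertibility/equivalence: the map $(\Delta x,\Delta y)\mapsto(\Delta y,\Delta\tilde x_1)$ together with the recovery formulas $\Delta x_1 = F_1^{1/2}\Delta\tilde x_1$ and $\Delta x_2 = D_2^{-1}(A_2^T\Delta y - (r_{dc})_2)$ is a bijection, so the solution of \eqref{LOsys: augmented system} is recovered from \eqref{LOsys: RAS}, and cite Lemma 4.1 of \cite{chai2007preconditioning} for the general $E_1$ version, specializing $E_1 = I$ (equivalently their scaling equal to $F_1^{1/2}$).

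The main obstacle I anticipate is purely the algebraic bookkeeping of getting the $(2,2)$ block to be exactly $-D_1$ (rather than $-F_1$ or $-F_1^{-1}D_1$) while simultaneously getting $H = BB^T + A_2D_2^{-1}A_2^T$ in the $(1,1)$ block — these two normalizations are coupled through where the "$+I$" in $F_1 = I + D_1$ gets absorbed, and it is easy to misplace a factor of $F_1^{\pm 1/2}$. The cleanest safeguard is to not rederive from scratch but to invoke Lemma 4.1 of \cite{chai2007preconditioning} directly, substitute their $E_1 \to I$ as the footnote indicates, and then just double-check the four defining expressions ($F_1$, $B$, $H$, $h$) and the recovery relation $\Delta\tilde x_1 = F_1^{-1/2}\Delta x_1$ against their statement; everything else (existence, uniqueness) follows since \eqref{LOsys: augmented system} is nonsingular under Assumption~\ref{LOassumption: interior} and $\mathrm{rank}(A)=m$.
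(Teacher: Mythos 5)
The paper offers no derivation at all for this lemma: it is imported verbatim as Lemma~4.1 of \cite{chai2007preconditioning} with $E_1=I$, which is exactly the fallback you settle on at the end, so in that sense your proposal matches the paper. Your attempt to rederive it directly is also salvageable, but the long ``reconciliation'' digression in the middle is caused by a single inversion slip: from $\Delta\tilde x_1 = F_1^{-1/2}\Delta x_1$ you must substitute $\Delta x_1 = F_1^{1/2}\Delta\tilde x_1$ (not $F_1^{-1/2}\Delta\tilde x_1$). With the correct substitution, multiplying the first block equation $-D_1\Delta x_1 + A_1^T\Delta y = (r_{dc})_1$ by $F_1^{-1/2}$ gives
\begin{equation*}
-F_1^{-1/2}D_1F_1^{1/2}\,\Delta\tilde x_1 + B^T\Delta y = F_1^{-1/2}(r_{dc})_1,
\end{equation*}
and since the diagonal matrices commute, $F_1^{-1/2}D_1F_1^{1/2}=D_1$: the $(2,2)$ block is $-D_1$ immediately, with no factor of $F_1^{-1}$ to explain away. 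The first block row then follows cleanly by adding $B$ times this rescaled equation to your reduced third equation $A_1\Delta x_1 + A_2D_2^{-1}A_2^T\Delta y = r_p + A_2D_2^{-1}(r_{dc})_2$: writing $A_1\Delta x_1 = BF_1\Delta\tilde x_1$, the combined $\Delta\tilde x_1$ coefficient is $B(F_1-D_1)=B$, the $\Delta y$ coefficient is $BB^T + A_2D_2^{-1}A_2^T = H$, and the right-hand side picks up $BF_1^{-1/2}(r_{dc})_1 = A_1F_1^{-1}(r_{dc})_1$, giving exactly $h$. Everything else in your outline (elimination of $\Delta x_2$, the bijection back to the solution of \eqref{LOsys: augmented system}) is fine; I would simply delete the exploratory middle portion and present either the two-line corrected computation above or the bare citation, as the paper does.
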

In this work, we take the following partition rule:
\begin{definition}\label{LOdef: rule}
If $x_{[i]} \geq \sqrt{[(1-\gamma_1)n + \gamma_1]\mu}$, then $i$ is  included in the index set defining partition indicated by subscript $"1"$; else, $i$ is included in the partition indicated by $"2"$.
\end{definition}
\begin{lemma}\label{LOlemma: bound: Psi F D}
Under the partition rule in Definition \ref{LOdef: rule}, the value of $D_{1,[i]}$ lies in the interval $[\frac{\gamma_1 \mu}{\omega^2}, 1]$; the value of $F_{1,[i]}$ lies in the interval $[\frac{\gamma_1\mu}{\omega^2}+1, 2]$; the value of $D_{2,[j]}$ lies in the interval $[\frac{\gamma_1}{(1-\gamma_1)n + \gamma_1}, \frac{\omega^2}{\gamma_1\mu}]$.
\end{lemma}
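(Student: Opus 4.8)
The plan is to derive all three bounds directly from the partition rule in Definition~\ref{LOdef: rule}, using the feasibility/neighborhood bounds $0 < x_{[i]} \le \omega$ and $0 < s_{[i]} \le \omega$ (which hold by the definition of $\omega$ in \eqref{LOdef:omega}), together with the complementarity-type lower bound $x_{[i]} s_{[i]} \ge \gamma_1 \mu$ coming from membership in the neighborhood $\mathcal{N}(\gamma_1,\gamma_2)$, and the fact that $\mu = x^T s / n$ so that each product $x_{[i]} s_{[i]} \le x^T s = n\mu$. Since $D = X^{-1}S$ is diagonal with $D_{[i]} = s_{[i]}/x_{[i]}$, I would express $D_{[i]}$ as $x_{[i]} s_{[i]} / x_{[i]}^2 = s_{[i]}^2/(x_{[i]} s_{[i]})$, which lets me convert the bounds on $x_{[i]}$, $s_{[i]}$, and $x_{[i]} s_{[i]}$ into bounds on $D_{[i]}$.

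For the $D_1$ bound: if $i$ is in partition ``$1$'' then by Definition~\ref{LOdef: rule} we have $x_{[i]}^2 \ge [(1-\gamma_1)n + \gamma_1]\mu$. Writing $D_{1,[i]} = x_{[i]} s_{[i]}/x_{[i]}^2$ and using $x_{[i]} s_{[i]} \ge \gamma_1\mu$ for the lower bound gives $D_{1,[i]} \ge \gamma_1 \mu / x_{[i]}^2 \ge \gamma_1\mu/\omega^2$; for the upper bound I would instead write $D_{1,[i]} = s_{[i]}^2/(x_{[i]} s_{[i]})$ and bound $s_{[i]}^2 \le x^T s = n\mu$ (since $s_{[i]}^2 \le \sum_j x_{[j]} s_{[j]}$ is \emph{not} immediate — actually one needs $s_{[i]} x_{[i]} \le n\mu$, so $s_{[i]}^2 \le n\mu \cdot s_{[i]}/x_{[i]} = n\mu \, D_{1,[i]}$, which is circular). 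The clean route is: $s_{[i]} = (x_{[i]} s_{[i]})/x_{[i]} \le n\mu / \sqrt{[(1-\gamma_1)n+\gamma_1]\mu}$, hence $D_{1,[i]} = s_{[i]}/x_{[i]} \le n\mu / ([(1-\gamma_1)n+\gamma_1]\mu) = n/[(1-\gamma_1)n+\gamma_1] \le 1$, where the last inequality holds because $(1-\gamma_1)n + \gamma_1 \ge n$ is equivalent to $\gamma_1 \ge \gamma_1 n$, i.e.\ to $n \le 1$ — so this needs a second look; more likely the intended bound uses $x_{[i]} s_{[i]} \le n\mu$ and $x_{[i]}^2 \ge [(1-\gamma_1)n+\gamma_1]\mu \ge n\mu$ is what fails, so the correct upper estimate is $D_{1,[i]} = x_{[i]} s_{[i]}/x_{[i]}^2 \le n\mu / x_{[i]}^2$ and one needs $x_{[i]}^2 \ge n\mu$, i.e.\ the threshold in Definition~\ref{LOdef: rule} must satisfy $(1-\gamma_1)n + \gamma_1 \ge n$. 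That inequality is false for $\gamma_1 \in (0,1)$ and $n>1$, so the upper bound ``$D_{1,[i]} \le 1$'' must instead follow from the complementary observation that $i \in$ partition ``$1$'' means the \emph{other} index is small; concretely $x_{[i]}^2 \ge [(1-\gamma_1)n+\gamma_1]\mu$ combined with $x_{[i]} s_{[i]} \le n\mu$ and $\gamma_1 \le 1$ gives $s_{[i]} \le n\mu/x_{[i]} \le n\mu/\sqrt{[(1-\gamma_1)n+\gamma_1]\mu} \le \sqrt{n\mu}/\sqrt{(1-\gamma_1)+\gamma_1/n} \cdot \sqrt{1}$, and then $D_{1,[i]} = s_{[i]}/x_{[i]} \le 1$ follows since $s_{[i]} \le \sqrt{n\mu} \cdot c \le x_{[i]}$ once the constants are tracked; I would carefully choose which pair of the three elementary bounds to multiply so that the threshold constant $[(1-\gamma_1)n+\gamma_1]$ exactly cancels the $n$ from $x^Ts \le n\mu$.

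Having settled $D_1$, the bound on $F_1 = I + D_1$ is immediate: adding $1$ to each entry of $[\gamma_1\mu/\omega^2,\,1]$ yields $[\gamma_1\mu/\omega^2 + 1,\, 2]$. For $D_2$: if $j$ is in partition ``$2$'' then $x_{[j]}^2 < [(1-\gamma_1)n+\gamma_1]\mu$, so $D_{2,[j]} = x_{[j]} s_{[j]}/x_{[j]}^2 \ge \gamma_1\mu/x_{[j]}^2 > \gamma_1\mu/([(1-\gamma_1)n+\gamma_1]\mu) = \gamma_1/[(1-\gamma_1)n+\gamma_1]$, giving the lower bound; and $D_{2,[j]} = s_{[j]}/x_{[j]} = s_{[j]}^2/(x_{[j]}s_{[j]}) \le \omega^2/(\gamma_1\mu)$ using $s_{[j]} \le \omega$ and $x_{[j]} s_{[j]} \ge \gamma_1\mu$, giving the upper bound. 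The main obstacle, as the scratch work above shows, is getting the upper bound $D_{1,[i]} \le 1$ to come out with the constant exactly $1$: one must pick the right combination of the three primitive inequalities ($x_{[i]},s_{[i]} \le \omega$; $x_{[i]} s_{[i]} \ge \gamma_1\mu$; $\sum_j x_{[j]} s_{[j]} = n\mu$ hence $x_{[i]} s_{[i]} \le n\mu$) and verify that the threshold $\sqrt{[(1-\gamma_1)n+\gamma_1]\mu}$ in Definition~\ref{LOdef: rule} is calibrated precisely so that $s_{[i]}/x_{[i]} \le 1$; I expect the argument to be $s_{[i]} \le n\mu/x_{[i]}$ and $x_{[i]} \ge \sqrt{[(1-\gamma_1)n+\gamma_1]\mu}$ together with the elementary fact $(1-\gamma_1)n+\gamma_1 \ge \gamma_1 \cdot n \ge \cdots$, so the bookkeeping of where the factor $n$ goes is the only delicate point; everything else is substitution.
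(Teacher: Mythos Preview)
Your approach is essentially the paper's, and everything except the upper bound $D_{1,[i]}\le 1$ is correct and matches. The gap is precisely where you flagged it: using only $x_{[i]}s_{[i]}\le n\mu$ you get
\[
D_{1,[i]}=\frac{x_{[i]}s_{[i]}}{x_{[i]}^2}\le\frac{n\mu}{[(1-\gamma_1)n+\gamma_1]\mu}=\frac{n}{(1-\gamma_1)n+\gamma_1},
\]
and since $(1-\gamma_1)n+\gamma_1<n$ for $\gamma_1\in(0,1)$ and $n>1$, this ratio exceeds $1$. No amount of rearranging the three inequalities you listed ($x_{[i]},s_{[i]}\le\omega$; $x_{[i]}s_{[i]}\ge\gamma_1\mu$; $x_{[i]}s_{[i]}\le n\mu$) will fix this, because the crude bound $x_{[i]}s_{[i]}\le n\mu$ is simply too weak.

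The missing idea is a \emph{sharper} upper bound on a single product $x_{[i]}s_{[i]}$, obtained by using the lower bound on the \emph{other} products. Since $x_{[j]}s_{[j]}\ge\gamma_1\mu$ for every $j\neq i$ and $\sum_j x_{[j]}s_{[j]}=n\mu$, one has
\[
x_{[i]}s_{[i]}=n\mu-\sum_{j\neq i}x_{[j]}s_{[j]}\le n\mu-(n-1)\gamma_1\mu=[(1-\gamma_1)n+\gamma_1]\mu.
\]
This is exactly the square of the threshold in Definition~\ref{LOdef: rule}, so
\[
D_{1,[i]}=\frac{x_{[i]}s_{[i]}}{x_{[i]}^2}\le\frac{[(1-\gamma_1)n+\gamma_1]\mu}{[(1-\gamma_1)n+\gamma_1]\mu}=1.
\]
In other words, the threshold in the partition rule was calibrated to match this refined upper bound, not the crude one; that is the ``bookkeeping'' you were looking for.
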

\begin{proof}
From the definition of the neighbourhood~\eqref{LOdef: neighbourhood}, we have
\begin{align*}
    x_{[i]}s_{[i]} \geq \gamma_1 \mu, \text{ for all } i \in \{1,\dots, n\},
\end{align*}
so
\begin{align*}
    x_{[i]}s_{[i]} = x^Ts - \sum_{j\neq i}x_{[j]}s_{[j]} \leq n\mu - \gamma_1(n-1)\mu.
\end{align*}
From the first inequality we have
\begin{align*}
    D_{1,[i]} = x_{1,[i]}^{-1}s_{1,[i]} = x_{1,[i]}s_{1,[i]}/x_{1,[i]}^2 \geq \frac{\gamma_1 \mu}{x_{1,[i]}^2} \geq \frac{\gamma_1 \mu}{\omega^2},
\end{align*}
and from the second inequality and Definition \ref{LOdef: rule} we derive
\begin{align*}
    D_{1,[i]} = x_{1,[i]}^{-1}s_{1,[i]} = x_{1,[i]}s_{1,[i]}/x_{1,[i]}^2 \leq \frac{n\mu - \gamma_1 (n-1)\mu}{x_{1,[i]}^2} \leq 1.
\end{align*}
Similarly, we can obtain the conclusion for $F_{1,[i]}$ since $F_{1,[i]} = D_{1,[i]} +1$. As for $D_{2,[j]}$, from the definition of the neighbourhood~$\mathcal{N}(\gamma_1,\gamma_2)$, see~\eqref{LOdef: neighbourhood}, we have
\begin{align*}
    D_{2,[j]} = x_{2,[j]}^{-1} s_{2,[j]} = (x_{2,[j]} s_{2,[j]})^{-1} s_{2,[j]}^2 \leq \frac{s_{2,[j]}^2}{\gamma_1\mu} \leq \frac{\omega^2}{\gamma_1\mu}
\end{align*}
and
\begin{align*}
    D_{2,[j]} = x_{2,[j]}^{-1} s_{2,[j]} = \frac{x_{2,[j]} s_{2,[j]}}{x_{2,[j]}^2} \geq \frac{\gamma_1\mu}{x_{2,[j]}^2} \geq \frac{\gamma_1}{(1-\gamma_1)n + \gamma_1},
\end{align*}
that completes the proof.
\end{proof}

Denote the coefficient matrix of \eqref{LOsys: RAS} by $K$, i.e.,
\begin{equation*}
    K = \begin{bmatrix}
        H & B\\ B^T & -D_1
        \end{bmatrix}.
\end{equation*}
It is proven in \cite{chai2007preconditioning} that $K, H$ and $D_1$ are all nonsingular. So one can use the Schur complement to write out the inverse matrix of $K$ as
\begin{equation*}
    K^{-1} = \begin{bmatrix}
    H^{-1} - H^{-1}BZ^{-1}B^TH^{-1} & H^{-1}BZ^{-1}\\
    Z^{-1}B^T H^{-1}                & -Z^{-1}
    \end{bmatrix},
\end{equation*}
where $Z = -(K/H) = B^T H^{-1} B + D_1$.
This naturally leads to the preconditioner with the following block structure
\begin{equation*}
    P_c^{-1} = \begin{bmatrix}
    \hat{H}^{-1} - \hat{H}^{-1}B\hat{Z}^{-1}B^T\hat{H}^{-1} & \hat{H}^{-1}B\hat{Z}^{-1}\\
    \hat{Z}^{-1}B^T \hat{H}^{-1}                & -\hat{Z}^{-1}
    \end{bmatrix},
\end{equation*}
where $\hat{H}$ is a selected positive definite matrix and $\hat{Z}$ is an approximation of $Z$ by using $\hat{H}$ as the approximation of $H$. The preconditioned \eqref{LOsys: RAS} system, that we refer to as P-RAS, is
\begin{equation}\label{LOsys: P_RAS}\tag{P-RAS}
    \begin{aligned}
        P_c^{-1}K = P_c^{-1} \begin{bmatrix}
        h\\F_1^{-1/2}(r_{dc})_1
        \end{bmatrix}.
    \end{aligned}
\end{equation}
The coefficient matrix of \eqref{LOsys: P_RAS} is
\begin{equation*}
    \begin{aligned}
        P_c^{-1}K &= \begin{bmatrix}
        \hat{H}^{-1}H - \hat{H}^{-1}B\hat{Z}^{-1}B^T(\hat{H}^{-1}H - I) & \hat{H}^{-1}B\hat{Z}^{-1}(\hat{Z} - B^T \hat{H}^{-1}B - D_1)\\
        \hat{Z}^{-1}B^T(\hat{H}^{-1}H - I) & \hat{Z}^{-1}(B^T \hat{H}^{-1}B + D_1)
        \end{bmatrix},
    \end{aligned}
\end{equation*}
and the right-hand-side vector is
\begin{equation*}
    \begin{aligned}
        \begin{bmatrix}
        \xi^\prime\\\xi^{\prime\prime}
        \end{bmatrix}= P_c^{-1}\begin{bmatrix}
        h\\F_1^{-\frac{1}{2}}(r_{dc})_1
        \end{bmatrix}=\begin{bmatrix}
        \hat{H}^{-1}h - \hat{H}^{-1}B\hat{Z}^{-1}B^T\hat{H}^{-1}h + \hat{H}^{-1}B\hat{Z}^{-1}F_1^{-\frac{1}{2}}(r_{dc})_1\\
        \hat{Z}^{-1}B^T \hat{H}^{-1}h -\hat{Z}^{-1}F_1^{-\frac{1}{2}}(r_{dc})_1
        \end{bmatrix}.
    \end{aligned}
\end{equation*}
In this work, we choose 
\begin{equation*}
    \begin{aligned}
        \hat{H} &= \hat{h}I\\
        \hat{h} &= \gamma_1\sigma_0^2(A)/\omega^2\\
        \hat{Z} &= B^T \hat{H}^{-1}B + D_1.
    \end{aligned}
\end{equation*} 
Then the coefficient matrix of \eqref{LOsys: P_RAS} can be simplified to
\begin{equation*}
    \begin{aligned}
        P_c^{-1}K &= \begin{bmatrix}
        \hat{H}^{-1}H +  \hat{H}^{-1}B\hat{Z}^{-1}B^T - \hat{H}^{-1}B\hat{Z}^{-1}B^T\hat{H}^{-1}H & 0\\
        \hat{Z}^{-1}B^T(\hat{H}^{-1}H - I) & I
        \end{bmatrix}.
    \end{aligned}
\end{equation*}
Take the following notations:
\begin{equation*}
    \begin{aligned}
        (P_c^{-1}K)_{11} &= \hat{H}^{-1}H +  \hat{H}^{-1}B\hat{Z}^{-1}B^T - \hat{H}^{-1}B\hat{Z}^{-1}B^T\hat{H}^{-1}H\\
        (P_c^{-1}K)_{21} &= \hat{Z}^{-1}B^T(\hat{H}^{-1}H - I).
    \end{aligned}
\end{equation*}
Then \eqref{LOsys: P_RAS} can be simplified as
\begin{equation*}
    \begin{aligned}
        (P_c^{-1}K)_{11}\Delta y &= \xi^\prime\\
        (P_c^{-1}K)_{21}\Delta y + \Delta \Tilde{x}_1&= \xi^{\prime\prime}.
    \end{aligned}
\end{equation*}
Notice that we only need the first equation to compute $\Delta y$. So we call the normalized first equation of \eqref{LOsys: P_RAS} as reduced \eqref{LOsys: P_RAS}, that we refer to as \eqref{LOsys: RP-RAS}, i.e.,
\begin{equation}\label{LOsys: RP-RAS}\tag{RP-RAS}
    \Xi\Delta y = \xi,
\end{equation}
where
\begin{equation*}
    \Xi=\frac{(P_c^{-1}K)_{11}}{\|(P_c^{-1}K)_{11}\|_2},\ \xi = \frac{\xi^\prime}{\|(P_c^{-1}K)_{11}\|_2}.
\end{equation*}
We build and solve \eqref{LOsys: RP-RAS} instead of \eqref{LOsys: mnes} using the QLSA in \cite{chakraborty2018power} and the QTA in \cite{van2023quantum}. We discuss how to build and solve \eqref{LOsys: RP-RAS} in Section \ref{LOsec: solve}. 
Before that, we analyze \eqref{LOsys: RP-RAS} and determine the target accuracy for the quantum subroutine in the next section. The pseudocode for preconditioned II-QIPM is provided here.
\begin{algorithm}
\caption{Preconditioned II-QIPM} \label{LOalg: II-QIPM pre}
\begin{algorithmic}[1]
 \State{Choose $\epsilon >0$,\ $\gamma_1\in(0,1)$,\ $\gamma_2>0$,\ $0<\eta<\beta_1<\beta_2<1$,}
 \State{$k \gets 0$,\ $(x^0, y^0, s^0) \gets (\omega^{\ast} e, 0 e, \omega^{\ast} e) $, and $\gamma_2 \gets \max\left\{1, \frac{\|(r_p^0,r_d^0)\|_2}{\mu^0}\right\}$,}
\While{$(x^k,y^k,s^k)\notin \mathcal{PD}_\epsilon$}
\State{$\mu^k \gets \frac{(x^k)^Ts^k}{n}$} %\label{LOalg-step:mu^k}
\State{Partition according to Definition \ref{LOdef: rule}}
\State{set $\epsilon_{\rm QLSA}^k \text{ and }\epsilon_{\rm QTA}^k$ using \eqref{LOeq: convergence condition pre}} %\label{LOalg:QTA error pre}
\State{$(\Delta x^k,\Delta y^k,\Delta s^k) \gets$ \textbf{build and solve} \eqref{LOsys: RP-RAS} by QLSA+QTA \\
  \hspace{4cm} with precision $\epsilon_{\rm QLSA}^k \text{ and }\epsilon_{\rm QTA}^k$} %\label{LOalg-step:NES solve}
\State{$\begin{aligned}
    \hat{\alpha}^k\gets \max \Big\{\bar{\alpha} \in [0,1] \ | \ & \text{for all } \alpha \in [0,\bar{\alpha}]\text{ we have }\\ &\big((x^k,y^k,s^k)+\alpha(\Delta x^k,\Delta y^k,\Delta s^k)\big) \in \mathcal{N}(\gamma_1, \gamma_2) \text{ and } \\
    &(x^k+\alpha \Delta x^k)^T(s^k+\alpha \Delta s^k)\leq \big(1-\alpha (1-\beta_2)\big)(x^k)^Ts^k \Big\}
\end{aligned}$} %\label{LOalg-step: alpha_hat^k}
\State{$(x^{k+1},y^{k+1},s^{k+1}) \gets (x^k,y^k,s^k)+\hat{\alpha}^k(\Delta x^k,\Delta y^k,\Delta s^k)$} %\label{LOalg-step:update solution}
\If{$\|x_{k+1},s_{k+1}\|_{\infty}>\omega^\ast$} %\label{LOalg-step:check infeasibility}
\State \textbf{return\ }{Primal or dual is infeasible.}
\EndIf
\State{$k \gets k+1$}
\EndWhile
\State \textbf{return\ }{$(x^k,y^k,s^k)$}
\end{algorithmic}
\end{algorithm}

%%%%%%%%%%%%%%%%%%%%%%%%%%%%%%%%%%%%%%%%%%%%%%%%%%%%%
\subsection{Condition Number Analysis}
%%%%%%%%%%%%%%%%%%%%%%%%%%%%%%%%%%%%%%%%%%%%%%%%%%%%%
In this section, we analyze the condition number of the coefficient matrix of \eqref{LOsys: RP-RAS}, or more precisely, the condition numbers of matrix $\hat{Z}$ and $\Xi$. 
To do so, we define
\begin{equation*}
    \begin{aligned}
            G=\hat{H}^{-1/2}B\hat{Z}^{-1}B^T\hat{H}^{-1/2}
    \end{aligned}
\end{equation*}
and
\begin{equation*}
    \begin{aligned}
        Y = G+ (I-G)\hat{H}^{-1/2}H\hat{H}^{-1/2}.
    \end{aligned}
\end{equation*}
By construction, $Y = (P_c^{-1}K)_{11}$. We first show that the smallest nonzero singular value of matrix $G$ is bounded from below by a constant.
\begin{lemma}\label{LOlemma: Glowerbound}
The smallest nonzero singular value of $G$ satisfies $\sigma_0(G) \geq \frac{1}{1+2\hat{h}\sigma_0^{-2}(A_1)}$.
\end{lemma}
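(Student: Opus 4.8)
The plan is to work with $G = \hat{H}^{-1/2} B \hat{Z}^{-1} B^T \hat{H}^{-1/2}$, where $\hat{H} = \hat h I$, $B = A_1 F_1^{-1/2}$, and $\hat Z = B^T \hat H^{-1} B + D_1 = \hat h^{-1} B^T B + D_1$. Since $\hat H = \hat h I$ is a scalar matrix, $G = \hat h^{-1} B \hat Z^{-1} B^T$, and $G$ is symmetric positive semidefinite with the same range as $B$ (equivalently, as $A_1$). So the nonzero singular values of $G$ coincide with its nonzero eigenvalues. First I would observe that $G$ restricted to $\operatorname{range}(B)$ has the same nonzero spectrum as $\hat h^{-1} \hat Z^{-1/2} B^T B \hat Z^{-1/2} = \hat Z^{-1/2}(\hat h^{-1} B^T B)\hat Z^{-1/2}$ acting on $\operatorname{range}(B^T)$ — the standard ``$MN$ vs.\ $NM$'' trick — so it suffices to bound from below the smallest nonzero eigenvalue of $\hat Z^{-1/2} (\hat h^{-1} B^T B) \hat Z^{-1/2}$.

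Next, write $\hat Z = \hat h^{-1} B^T B + D_1$. On $\operatorname{range}(B^T B) = \operatorname{range}(B^T)$, for any unit vector $v$ in that subspace we want $\dfrac{v^T (\hat h^{-1} B^T B) v}{v^T \hat Z v} = \dfrac{v^T(\hat h^{-1}B^TB)v}{v^T(\hat h^{-1}B^TB)v + v^T D_1 v} \ge \dfrac{1}{1 + \frac{v^T D_1 v}{v^T(\hat h^{-1}B^TB)v}}$. Now $v^T D_1 v \le \|D_1\|_2 \le 1$ by Lemma~\ref{LOlemma: bound: Psi F D}, and $v^T (\hat h^{-1} B^T B) v \ge \hat h^{-1} \sigma_0^2(B) \|v\|_2^2 = \hat h^{-1}\sigma_0^2(B)$ for $v$ in the range of $B^T$. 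It remains to relate $\sigma_0(B)$ to $\sigma_0(A_1)$: since $B = A_1 F_1^{-1/2}$ and $F_1^{-1/2}$ is diagonal with entries bounded below by $1/\sqrt{2}$ (again from Lemma~\ref{LOlemma: bound: Psi F D}, as $F_{1,[i]} \le 2$), we get $\sigma_0(B) \ge \sigma_0(A_1)/\sqrt{2}$, hence $v^T(\hat h^{-1}B^TB)v \ge \hat h^{-1}\sigma_0^2(A_1)/2$. Plugging in gives $\dfrac{v^T(\hat h^{-1}B^TB)v}{v^T D_1 v} \ge \dfrac{\hat h^{-1}\sigma_0^2(A_1)}{2}$, so the ratio above is at least $\dfrac{1}{1 + 2\hat h \sigma_0^{-2}(A_1)}$, which is exactly the claimed bound $\sigma_0(G) \ge \dfrac{1}{1 + 2\hat h \sigma_0^{-2}(A_1)}$.

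The one technical point I would be careful about is the subspace bookkeeping in the ``$MN$ vs.\ $NM$'' step: I need to argue that $\operatorname{range}(B^T B) = \operatorname{range}(B^T)$, that the map $v \mapsto Bv$ is a bijection between $\operatorname{range}(B^T)$ and $\operatorname{range}(B)$, and that passing through the congruence by $\hat Z^{-1/2}$ (which is positive definite, hence invertible) preserves the subspace $\operatorname{range}(B^T)$ appropriately — or, alternatively, avoid this entirely by noting $\sigma_0(G)^{-1} = \|(G|_{\operatorname{range}(B)})^{-1}\|_2$ and bounding $\|(G|_{\operatorname{range}(B)})^{-1}\|$ directly via the variational characterization $\sigma_0(G) = \min\{ u^T G u : \|u\|_2 = 1,\ u \in \operatorname{range}(B)\}$ and writing $u = Bw/\|Bw\|$. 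The computation itself is elementary once the linear-algebra setup is pinned down; I expect the linear-algebra setup (choosing whether to reason on $\operatorname{range}(B)$ or $\operatorname{range}(B^T)$ and handling the non-invertibility of $A_1$, which need not have full row rank after the partition) to be the only place where care is genuinely needed.
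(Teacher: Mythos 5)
Your proposal is correct and reaches the paper's bound by a genuinely different route. The paper writes $G = J^T(JJ^T+D_1)^{-1}J$ with $J = B^T\hat H^{-1/2}$, takes the full SVD of $J$, reduces to a $2\times 2$ block matrix, applies a Schur-complement identity together with an eigenvalue-interlacing lemma (Lemma~\ref{lemma: interlace block matrix}), and then bounds $\sigma_1(\Sigma_0^{-1}\Phi\Sigma_0^{-1})$ by submultiplicativity. You instead use the $MN$ versus $NM$ spectral identity to pass to $\hat Z^{-1/2}(\hat h^{-1}B^TB)\hat Z^{-1/2}$ and bound the generalized Rayleigh quotient $\tfrac{w^T C w}{w^T C w + w^T D_1 w}$; this avoids the Schur complement and the interlacing lemma entirely and is arguably more transparent. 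Both arguments ultimately consume exactly the same three ingredients: $\sigma_1(D_1)\le 1$, $\sigma_1(F_1)\le 2$ from Lemma~\ref{LOlemma: bound: Psi F D}, and the product bound $\sigma_0(A_1F_1^{-1/2})\ge \sigma_0(A_1)\sigma_0(F_1^{-1/2})$ (Lemma~\ref{lemma: least nonzero singular value of matrix product}).

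On the one technical point you flag: it does need patching, but the patch is standard. It is not true that the eigenvectors of $W=\hat Z^{-1/2}C\hat Z^{-1/2}$ for nonzero eigenvalues pull back under $\hat Z^{-1/2}$ to vectors in $\operatorname{range}(B^T)$ (they land in $\hat Z^{-1}\operatorname{range}(B^T)$, and $D_1$ need not preserve $\operatorname{range}(B^T)$). The correct justification is Courant--Fischer max--min: with $k=\operatorname{rank}(B)=\operatorname{rank}(W)$, choose the $k$-dimensional test subspace $S=\hat Z^{1/2}\operatorname{range}(B^T)$, so that
\begin{equation*}
\sigma_0(G)=\lambda_k(W)\;\ge\;\min_{u\in S\setminus\{0\}}\frac{u^TWu}{u^Tu}\;=\;\min_{w\in\operatorname{range}(B^T)\setminus\{0\}}\frac{w^TCw}{w^T\hat Z w},
\end{equation*}
after which your estimate goes through verbatim since $\|Bw\|_2\ge\sigma_0(B)\|w\|_2$ on $\ker(B)^{\perp}=\operatorname{range}(B^T)$. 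Your alternative suggestion of substituting $u=Bw/\|Bw\|_2$ directly into $u^TGu$ does not give the same quotient (it yields $\hat h^{-1}\,w^TB^TB\hat Z^{-1}B^TBw/\|Bw\|_2^2$, not $w^TCw/w^T\hat Z w$), so I would stick with the max--min route above.
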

\begin{proof}
Let $J = B^T \hat{H}^{-\frac{1}{2}}$ and denote the singular value decomposition (SVD) of $J$ by $J = Q\Sigma P^T = Q_0 \Sigma_0 P_0^T$, where the first SVD is the full SVD and the second one is the reduced SVD. Then matrix $G$ can be expressed using $J$ and $D_1$,
\begin{equation*}
    \begin{aligned}
    G = J^T(JJ^T + D_1)^{-1}J = P\Sigma^T (\Sigma\Sigma^T +  Q^T D_1 Q)^{-1}\Sigma P^T.
\end{aligned}
\end{equation*}
Let $G^\prime = \Sigma\Sigma^T +  Q^T D_1 Q$. Then $G^\prime$ and $G^{\prime {-1}}$ can be expressed as
\begin{align}
    G^\prime = \begin{bmatrix}
    \Sigma_0^2 + \Phi_{11} & \Phi_{12}\\ \Phi_{21} & \Phi_{22}
    \end{bmatrix} \text{ and }
    G^{\prime {-1}} = \begin{bmatrix}
    (G^{\prime {-1}})_{11} & (G^{\prime{-1}})_{12}\\
    (G^{\prime {-1}})_{21} & (G^{\prime {-1}})_{22}
    \end{bmatrix},
\end{align}
where $\Phi_{ij}$ are submatrices of the matrix $\Phi = Q^T D_1 Q$ and $(G^{\prime -1})_{11} = (\Sigma_0^2 + (\Phi/\Phi_{22}))^{-1}$. 
So matrix $G$ can be expressed as
\begin{align}\label{LOmatrixG2}
    G&=P\begin{bmatrix}
    (I + \Sigma_0^{-1}(\Phi/\Phi_{22})\Sigma_0^{-1})^{-1} &0\\0&0
    \end{bmatrix}P^T
\end{align}
and the singular values of matrix $G$ are the same as the singular values of the matrix in the middle of the right-hand-side expression.
By Lemma \ref{lemma: nonzero singular value of block matrix with zero wrap}, the nonzero singular values of the matrix in the middle equals to the singular values of $\left(I + \Sigma_0^{-1}\left(\Phi/\Phi_{22}\right)\Sigma_0^{-1}\right)^{-1}$, i.e., the reciprocal of the result of the singular values of $\Sigma_0^{-1}\left(\Phi/\Phi_{22}\right)\Sigma_0^{-1}$ plus $1$. By Lemma \ref{lemma: interlace block matrix}, the singular values of $\Sigma_0^{-1}(\Phi/\Phi_{22})\Sigma_0^{-1}$
are bounded from above by the largest singular value of
$\Sigma_0^{-1}\Phi\Sigma_0^{-1}$. Then, by using Lemma \ref{lemma: largest singular value of matrix product}, we have
\begin{align*}
    \sigma_1(\Sigma_0^{-1}\Phi\Sigma_0^{-1})&\leq \sigma_1(\Sigma_0^{-1}) \sigma_1(\Phi)\sigma_1(\Sigma_0^{-1})\\
    &=[\sigma_0(\Sigma_0)]^{-2} \sigma_1(D_1)\\
    &=[\sigma_0(F_1^{-\frac{1}{2}}A_1^T)]^{-2} \hat{h} \sigma_1(D_1)\\
    &\leq [\sigma_0(F_1^{-\frac{1}{2}})\sigma_0(A_1^T)]^{-2}\hat{h}\sigma_1(D_1)\\
    &=\hat{h}\sigma_0^{-2}(A_1)\sigma_1(F_1)\sigma_1(D_1)\\
    &\leq 2\hat{h}\sigma_0^{-2}(A_1),
\end{align*}
where the second inequality follows from Lemma \ref{lemma: least nonzero singular value of matrix product} and the last inequality follows from Lemma \ref{LOlemma: bound: Psi F D}. Thus
\begin{align}
    \sigma_0(G) \geq \frac{1}{1+2\hat{h}\sigma_0^{-2}(A_1)}.
\end{align}
The proof is complete.
\end{proof}

\begin{lemma}\label{LOlemmaGupperbound}
The largest singular value of $G$ satisfies $\sigma_1(G)\leq \left(1+\frac{\hat{h}\gamma_1\mu}{\omega^2}\sigma_1^{-2}(A_1)\right)^{-1}$.
\end{lemma}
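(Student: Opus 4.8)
The plan is to mirror the structure of the proof of Lemma~\ref{LOlemma: Glowerbound}, but now obtain an \emph{upper} bound on $\sigma_1(G)$ by producing a \emph{lower} bound on the smallest relevant eigenvalue appearing in the denominator of~\eqref{LOmatrixG2}. Recall from that proof that the nonzero singular values of $G$ coincide with the singular values of $\left(I + \Sigma_0^{-1}(\Phi/\Phi_{22})\Sigma_0^{-1}\right)^{-1}$, where $\Phi = Q^TD_1Q$ and $\Sigma_0$ collects the nonzero singular values of $J = B^T\hat{H}^{-1/2} = F_1^{-1/2}A_1^T\hat{h}^{-1/2}$. Hence $\sigma_1(G) = \left(1 + \sigma_m\!\left(\Sigma_0^{-1}(\Phi/\Phi_{22})\Sigma_0^{-1}\right)\right)^{-1}$, so it suffices to show that the smallest singular value (equivalently, since the matrix is symmetric positive semidefinite, the smallest eigenvalue) of $\Sigma_0^{-1}(\Phi/\Phi_{22})\Sigma_0^{-1}$ is at least $\frac{\hat h\gamma_1\mu}{\omega^2}\sigma_1^{-2}(A_1)$.

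The key steps, in order, would be: (i) Observe that $\Phi = Q^TD_1Q$ is symmetric positive definite since $D_1 \succ 0$ (by Lemma~\ref{LOlemma: bound: Psi F D}, $D_{1,[i]} \geq \gamma_1\mu/\omega^2 > 0$), so its Schur complement $(\Phi/\Phi_{22})$ is also symmetric positive definite; moreover the eigenvalues of the Schur complement of a symmetric positive definite matrix are bounded below by the smallest eigenvalue of the whole matrix — this is a standard interlacing-type fact, which I would invoke via the appropriate lemma in the appendix (analogous to the use of Lemma~\ref{lemma: interlace block matrix} in the previous proof, but for the lower end of the spectrum), or directly from $(\Phi/\Phi_{22}) \succeq \lambda_{\min}(\Phi) I$ after restricting the quadratic form. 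Since $\Phi$ is orthogonally similar to $D_1$, $\lambda_{\min}(\Phi) = \sigma_m(D_1) = \min_i D_{1,[i]} \geq \gamma_1\mu/\omega^2$ by Lemma~\ref{LOlemma: bound: Psi F D}. (ii) Bound the conjugation by $\Sigma_0^{-1}$: for any symmetric positive semidefinite $W$ and any matrix $\Sigma_0$ of full row rank, $\sigma_{\min}(\Sigma_0^{-1}W\Sigma_0^{-1}) \geq \sigma_{\min}(W)\,\sigma_1^{-2}(\Sigma_0)$, which follows from the min–max characterization of eigenvalues or from Lemma~\ref{lemma: least nonzero singular value of matrix product}–type estimates. (iii) Bound $\sigma_1(\Sigma_0)$ from above: $\sigma_1(\Sigma_0) = \sigma_1(J) = \hat h^{-1/2}\sigma_1(F_1^{-1/2}A_1^T) \leq \hat h^{-1/2}\sigma_1(F_1^{-1/2})\sigma_1(A_1^T) = \hat h^{-1/2}\sigma_1(A_1)\,\sigma_m(F_1)^{-1/2} \leq \hat h^{-1/2}\sigma_1(A_1)$, using $\sigma_m(F_1) = \min_i F_{1,[i]} \geq 1$ from Lemma~\ref{LOlemma: bound: Psi F D}. (iv) Combine: $\sigma_{\min}\!\left(\Sigma_0^{-1}(\Phi/\Phi_{22})\Sigma_0^{-1}\right) \geq \frac{\gamma_1\mu}{\omega^2}\cdot\frac{\hat h}{\sigma_1^2(A_1)} = \frac{\hat h\gamma_1\mu}{\omega^2}\sigma_1^{-2}(A_1)$, and therefore $\sigma_1(G) \leq \left(1 + \frac{\hat h\gamma_1\mu}{\omega^2}\sigma_1^{-2}(A_1)\right)^{-1}$.

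I would be careful about one subtlety: the dimensions. $\Phi/\Phi_{22}$ is a $k\times k$ matrix where $k$ is the rank of $B$ (equivalently of $A_1$), matching the size of $\Sigma_0$, so the conjugation $\Sigma_0^{-1}(\Phi/\Phi_{22})\Sigma_0^{-1}$ is well-defined and $\Sigma_0$ is square and invertible; thus $\sigma_1^{-1}(\Sigma_0) = \sigma_{\min}(\Sigma_0^{-1})$ and the chain of inequalities in step (ii) is clean, with no null-space pathologies. I also need the fact, used implicitly, that $\sigma_0(A_1) \le \sigma_1(A_1)$ is not what is invoked here; rather the reduced-SVD bookkeeping from Lemma~\ref{LOlemma: Glowerbound} carries over verbatim, so I would simply reference it.

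The main obstacle is step (i): establishing that the smallest eigenvalue of a Schur complement $(\Phi/\Phi_{22})$ of a symmetric positive definite matrix $\Phi$ is bounded below by $\lambda_{\min}(\Phi)$. This is true (it is the lower-end counterpart of the eigenvalue interlacing used for the upper bound in Lemma~\ref{LOlemma: Glowerbound}), but it needs either a cited lemma or a short argument: for any unit vector $u$, $u^T(\Phi/\Phi_{22})u = \min_{w} \begin{bmatrix} u \\ w \end{bmatrix}^T \Phi \begin{bmatrix} u \\ w \end{bmatrix} \big/ \ldots$ — more precisely $(\Phi/\Phi_{22})$ equals the inverse of the leading block of $\Phi^{-1}$, so $\lambda_{\max}\!\left((\Phi/\Phi_{22})^{-1}\right) \le \lambda_{\max}(\Phi^{-1}) = \lambda_{\min}(\Phi)^{-1}$, giving $\lambda_{\min}(\Phi/\Phi_{22}) \ge \lambda_{\min}(\Phi)$. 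I expect the paper to package this as one of the block-matrix lemmas in its appendix and cite it; in the write-up I would state it as such and keep the rest of the computation to the few lines sketched above.
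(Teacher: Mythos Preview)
Your proposal is correct and follows essentially the same approach as the paper: both reduce to lower-bounding $\sigma_0\bigl(\Sigma_0^{-1}(\Phi/\Phi_{22})\Sigma_0^{-1}\bigr)$ via the Schur-complement interlacing (Lemma~\ref{lemma: interlace block matrix}), the bound $\sigma_0(D_1)\ge\gamma_1\mu/\omega^2$ from Lemma~\ref{LOlemma: bound: Psi F D}, and $\sigma_1(\Sigma_0)\le\hat h^{-1/2}\sigma_1(A_1)$ using $\sigma_0(F_1)\ge 1$. The only cosmetic difference is that you interlace on $\Phi$ first and then conjugate by $\Sigma_0^{-1}$, whereas the paper conjugates first (writing, with a slight abuse of notation, $\sigma_0(\Sigma_0^{-1}\Phi\Sigma_0^{-1})$) and then applies Lemma~\ref{lemma: least nonzero singular value of matrix product} twice; the two orderings yield the identical bound.
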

\begin{proof}
Similar to the proof for Lemma \ref{LOlemma: Glowerbound}, we have $\sigma_1(G)\leq \left(1+\sigma_0(\Sigma_0^{-1}\Phi\Sigma_0^{-1})\right)^{-1}$. Apply Lemma \ref{lemma: least nonzero singular value of matrix product} twice, then it follows that
\begin{align*}
    \sigma_0(\Sigma_0^{-1}\Phi\Sigma_0^{-1})&\geq \sigma_0(\Sigma_0^{-1}) \sigma_0(\Phi)\sigma_0(\Sigma_0^{-1})\\
    &=[\sigma_1(\Sigma_0)]^{-2} \sigma_0(D_1)\\
    &=[\sigma_1(F_1^{-\frac{1}{2}}A_1^T)]^{-2} \hat{h} \sigma_0(D_1)\\
    &\geq [\sigma_1(F_1^{-\frac{1}{2}})\sigma_1(A_1^T)]^{-2}\hat{h}\sigma_0(D_1)\\
    &=\hat{h}\sigma_1^{-2}(A_1)\sigma_0(F_1)\sigma_0(D_1)\\
    &\geq \frac{\hat{h}\gamma_1\mu}{\omega^2}\sigma_1^{-2}(A_1),
\end{align*}
that completes the proof.
\end{proof}

With the spectral properties of matrix $G$, we then analyze matrix $Y$.  For simplicity of the analysis we denote the SVD of matrix $G$ as
$G = R\Psi R^T = R_0\Psi_0 R_0^T$,
where the former SVD is the full SVD and the latter one is the reduced SVD. To study the properties of matrix $Y$, it is worth mentioning that $Y$ can be represented in the following way
\begin{equation}
    \begin{aligned}\label{LOmatrixYsimplified}
    R^T Y R &= R^T G R + R^T (I-G)\frac{1}{\hat{h}}HR\\
    &= R^T G R + R^T (I-G)RR^T\frac{1}{\hat{h}}HR\\
    &= \Psi + \left(I - \Psi\right) \frac{1}{\hat{h}}R^THR\\
    &= \begin{bmatrix}
           \Psi_0 & 0\\0 &0
    \end{bmatrix} + 
    \begin{bmatrix}
           I - \Psi_0 & 0\\0&I
    \end{bmatrix}\mathcal{A}\\
    &= \begin{bmatrix}
    \Psi_0 + \mu(I - \Psi_0)&\\&\mu I
    \end{bmatrix} + \begin{bmatrix}
    I-\Psi_0&\\&I
    \end{bmatrix}(\mathcal{A} - \mu I),
\end{aligned}
\end{equation}
where
\begin{equation*}
    \begin{aligned}
    \mathcal{A} &= \frac{1}{\hat{h}}R^THR.
\end{aligned}
\end{equation*}
We present the spectral properties of matrix $\mathcal{A}$ in the following lemma and then we study the spectral properties of matrix $Y$.
\begin{lemma}\label{LOlemma: Acal}
The smallest nonzero singular value of $\mathcal{A}$ satisfies $\sigma_0(\mathcal{A})\geq \mu$
and the largest singular value of $\mathcal{A}$ satisfies $\sigma_1(\mathcal{A})\leq \frac{(1-\gamma_1)n + \gamma_1}{\gamma_1\hat{h}} \sigma_1^2(A)$.
\end{lemma}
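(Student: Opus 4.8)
The plan is to recall that $\mathcal{A} = \frac{1}{\hat h} R^T H R$, where $R$ is orthogonal (it comes from the full SVD $G = R\Psi R^T$), so $\mathcal{A}$ is an orthogonal conjugation of $\frac{1}{\hat h} H$; hence its singular values (and eigenvalues, since $H$ is symmetric positive definite) coincide with those of $\frac{1}{\hat h} H$. Therefore it suffices to bound the extreme eigenvalues of $H = BB^T + A_2 D_2^{-1} A_2^T$ from below and above, and then divide by $\hat h = \gamma_1 \sigma_0^2(A)/\omega^2$.

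For the lower bound, the key observation is that $H \succeq A_2 D_2^{-1} A_2^T$ since $BB^T \succeq 0$. By Lemma~\ref{LOlemma: bound: Psi F D}, every diagonal entry of $D_2$ is at most $\omega^2/(\gamma_1\mu)$, so $D_2^{-1} \succeq \frac{\gamma_1\mu}{\omega^2} I$, giving $H \succeq \frac{\gamma_1\mu}{\omega^2} A_2 A_2^T$. Now $A_2$ is a column submatrix of $A$, and $A$ has rank $m$; I would argue (this is the one point needing a little care) that the relevant bound uses $\sigma_0(A)$ — i.e. that $\sigma_{\min}(A_2 A_2^T)$, interpreted via the smallest nonzero singular value, is at least $\sigma_0^2(A)$, or more directly that $H$ is positive definite with $\lambda_{\min}(H) \geq \frac{\gamma_1\mu}{\omega^2}\sigma_0^2(A)$ because $H = A\,\mathrm{diag}(F_1^{-1}, D_2^{-1})A^T$ and the middle diagonal matrix is bounded below; indeed $F_1^{-1} \succeq \frac{1}{2}I$ and $D_2^{-1} \succeq \frac{\gamma_1\mu}{\omega^2}I \leq \frac12 I$ for small $\mu$, so $\mathrm{diag}(F_1^{-1},D_2^{-1}) \succeq \frac{\gamma_1\mu}{\omega^2} I$ (taking the worse of the two, which for $\mu$ small is the $D_2$ block), whence $\lambda_{\min}(H) \geq \frac{\gamma_1\mu}{\omega^2}\lambda_{\min}(AA^T) = \frac{\gamma_1\mu}{\omega^2}\sigma_0^2(A)$ since $\mathrm{rank}(A)=m$ makes $AA^T$ nonsingular. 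Dividing by $\hat h$ gives $\sigma_0(\mathcal{A}) = \lambda_{\min}(\mathcal{A}) \geq \frac{\gamma_1\mu}{\omega^2}\sigma_0^2(A) \big/ \big(\gamma_1\sigma_0^2(A)/\omega^2\big) = \mu$, as claimed.

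For the upper bound, write $H = A\,\mathrm{diag}(F_1^{-1}, D_2^{-1})A^T$ and bound the middle diagonal matrix above: by Lemma~\ref{LOlemma: bound: Psi F D}, $F_1^{-1} \preceq \frac{\omega^2}{\gamma_1\mu + \omega^2}I \preceq I$ and $D_2^{-1} \preceq \frac{(1-\gamma_1)n+\gamma_1}{\gamma_1} I$, so $\mathrm{diag}(F_1^{-1},D_2^{-1}) \preceq \frac{(1-\gamma_1)n+\gamma_1}{\gamma_1} I$ (the $D_2$ block dominates since that factor is $\geq 1$). Hence $\lambda_{\max}(H) \leq \frac{(1-\gamma_1)n+\gamma_1}{\gamma_1}\lambda_{\max}(AA^T) = \frac{(1-\gamma_1)n+\gamma_1}{\gamma_1}\sigma_1^2(A)$, and therefore $\sigma_1(\mathcal{A}) = \lambda_{\max}(H)/\hat h \leq \frac{(1-\gamma_1)n+\gamma_1}{\gamma_1\hat h}\sigma_1^2(A)$.

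The main obstacle — really the only subtle point — is the lower bound: one must be careful about whether $A_2$ alone has full row rank (it may not, so $A_2 A_2^T$ can be singular), which is why it is cleaner to keep the full $H = A\,\mathrm{diag}(F_1^{-1},D_2^{-1})A^T$ representation and bound the diagonal weight matrix below uniformly, rather than discarding the $BB^T$ term. I would also double-check the regime assumption implicit in "taking the worse of the two blocks" — namely that $\mu$ is small enough that $\gamma_1\mu/\omega^2 \leq 1/2$ and $\gamma_1\mu/\omega^2 \leq \gamma_1/((1-\gamma_1)n+\gamma_1)$ — which holds near optimality and can be assumed w.l.o.g., or handled by noting the bound $\sigma_0(\mathcal{A}) \geq \mu$ only needs $D_2^{-1} \succeq \frac{\gamma_1\mu}{\omega^2}I$ and $F_1^{-1} \succeq \frac{\gamma_1\mu}{\omega^2}I$, the latter being immediate since $F_1 \preceq 2I \preceq \frac{\omega^2}{\gamma_1\mu}I$ for small $\mu$.
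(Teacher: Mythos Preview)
Your proposal is correct and follows essentially the same route as the paper: both recognize that $R$ is orthogonal so the spectrum of $\mathcal{A}$ equals that of $\frac{1}{\hat h}H$, write $H = A\,\mathrm{diag}(F_1^{-1},D_2^{-1})A^T$, bound the diagonal weight matrix below by $\frac{\gamma_1\mu}{\omega^2}I$ and above by $\frac{(1-\gamma_1)n+\gamma_1}{\gamma_1}I$ via Lemma~\ref{LOlemma: bound: Psi F D}, and then divide by $\hat h$. The only cosmetic difference is that the paper factors $H=(AM_{FD})(AM_{FD})^T$ with $M_{FD}=\mathrm{diag}(F_1^{-1/2},D_2^{-1/2})$ and invokes the product lemma $\sigma_0(AM_{FD})\ge\sigma_0(A)\sigma_0(M_{FD})$, whereas you phrase the same step through the Loewner ordering $H\succeq \frac{\gamma_1\mu}{\omega^2}AA^T$; your caution about not dropping the $BB^T$ term (since $A_2$ need not have full row rank) and your remark on the implicit regime assumption $\gamma_1\mu/\omega^2\le 1/2$ are both apt and match what the paper does (and tacitly assumes).
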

\begin{proof}
Denote $\begin{bmatrix} F_1^{-\frac{1}{2}}&0\\0&D_2^{-\frac{1}{2}}\end{bmatrix}$ by $M_{FD}$. By Lemma \ref{lemma: least nonzero singular value of matrix product}, it follows that
\begin{align*}
    \sigma_0(\mathcal{A}) = \frac{1}{\hat{h}} \sigma_0(AM_{FD})^2   \geq \frac{1}{\hat{h}}\sigma_0(A)^2 \sigma_0(M_{FD})^2 \geq \frac{\gamma_1 \mu}{\hat{h} \omega^2}\sigma_0(A)^2\geq \mu,
\end{align*}
where the second inequality follows from Lemma \ref{LOlemma: bound: Psi F D}. 
Similarly, by Lemma \ref{lemma: largest singular value of matrix product} and Lemma \ref{LOlemma: bound: Psi F D},
\begin{align*}
    \sigma_1(\mathcal{A}) \leq \frac{(1-\gamma_1)n + \gamma_1}{\gamma_1\hat{h}} \sigma_1^2(A),
\end{align*}
that completes the proof.
\end{proof}
Now we are ready to study the condition number of $Y$. 
\begin{lemma}\label{LOlemma: condition number Y}
Using the partition rule in Definition \ref{LOdef: rule}, the singular values of $Y$ satisfy $\sigma_0(Y) = \Omega\left(\mu\right)$
and
\begin{equation*}
    \sigma_1(Y)\leq 1 + \frac{(1-\gamma_1)n + \gamma_1}{\gamma_1\hat{h}} \sigma_1^2(A),
\end{equation*}
thus the condition number of $Y$ satisfies
\begin{equation*}
    \kappa(Y) = \mathcal{O}\left(\frac{1}{\mu} \left(1 + \frac{(1-\gamma_1)n + \gamma_1}{\gamma_1\hat{h}} \sigma_1^2(A)\right) \right).
\end{equation*}
\end{lemma}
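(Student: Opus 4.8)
Since $R$ is orthogonal, $\sigma_i(Y)=\sigma_i(R^{T}YR)$ for every $i$, so it suffices to analyze $R^{T}YR$. From the third line of \eqref{LOmatrixYsimplified} we have $R^{T}YR=\Psi+(I-\Psi)\mathcal{A}$; writing $W:=I-\Psi=\mathrm{diag}(I-\Psi_0,I)$ (this is exactly the block matrix appearing in the last line of \eqref{LOmatrixYsimplified}) and recalling $\mathcal{A}=\hat h^{-1}R^{T}HR\succ 0$, this reads $R^{T}YR=\Psi+W\mathcal{A}$ with $W\succ 0$. By Lemmas \ref{LOlemma: Glowerbound}--\ref{LOlemmaGupperbound} every singular value of $G$ lies in the interval $[\sigma_0(G),\sigma_1(G)]\subset(0,1)$, hence $\|\Psi\|_2=\sigma_1(G)<1$ and $\|W\|_2=1$.

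For the upper bound I would simply use the triangle inequality, submultiplicativity of the operator norm, and the bound on $\sigma_1(\mathcal{A})$ from Lemma \ref{LOlemma: Acal}:
\[
\sigma_1(Y)=\|\Psi+W\mathcal{A}\|_2\le\|\Psi\|_2+\|W\|_2\|\mathcal{A}\|_2<1+\sigma_1(\mathcal{A})\le 1+\tfrac{(1-\gamma_1)n+\gamma_1}{\gamma_1\hat h}\sigma_1^2(A),
\]
which already gives the claimed estimate for $\sigma_1(Y)$.

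The substance of the lemma is the lower bound $\sigma_0(Y)=\sigma_{\min}(Y)=\|Y^{-1}\|_2^{-1}=\Omega(\mu)$; note that $Y$ is nonsingular because $P_c^{-1}K$ is a product of two nonsingular matrices and has the block‑triangular form $\bigl[\begin{smallmatrix}Y&0\\ \ast&I\end{smallmatrix}\bigr]$. The plan is to invert $R^{T}YR=\Psi+W\mathcal{A}$ in the $2\times 2$ block structure induced by the reduced SVD of $G$, using the Schur complement with respect to the second diagonal block $\widetilde Y_{22}=\mathcal{A}_{22}$. Since $\widetilde Y_{11}=\Psi_0+(I-\Psi_0)\mathcal{A}_{11}$, $\widetilde Y_{12}=(I-\Psi_0)\mathcal{A}_{12}$, $\widetilde Y_{21}=\mathcal{A}_{21}$, the Schur complement is
\[
\mathcal{D}=(R^{T}YR/\widetilde Y_{22})=\widetilde Y_{11}-\widetilde Y_{12}\mathcal{A}_{22}^{-1}\widetilde Y_{21}=\Psi_0+(I-\Psi_0)(\mathcal{A}/\mathcal{A}_{22}),
\]
and $(R^{T}YR)^{-1}$ is the usual block‑inverse expression built from $\mathcal{A}_{22}^{-1}$, $\mathcal{A}_{12}$, $\mathcal{A}_{21}$, $I-\Psi_0$ and $\mathcal{D}^{-1}$. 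The quantities that a priori could blow up faster than $1/\mu$ are then controlled termwise: $\lambda_{\min}(\mathcal{A}_{22})\ge\lambda_{\min}(\mathcal{A})=\sigma_0(\mathcal{A})\ge\mu$ by Lemma \ref{LOlemma: Acal}, so $\|\mathcal{A}_{22}^{-1}\|_2\le 1/\mu$; $(\mathcal{A}/\mathcal{A}_{22})$, being the Schur complement of the SPD matrix $\mathcal{A}$, satisfies $\sigma_0(\mathcal{A})I\preceq(\mathcal{A}/\mathcal{A}_{22})$ and $\|(\mathcal{A}/\mathcal{A}_{22})\|_2\le\sigma_1(\mathcal{A})$; and for $\mathcal{D}^{-1}$ one uses the factorization $\mathcal{D}=(I-\Psi_0)\bigl((I-\Psi_0)^{-1}\Psi_0+(\mathcal{A}/\mathcal{A}_{22})\bigr)$ together with the uniform lower bound $(I-\Psi_0)^{-1}\Psi_0\succeq\frac{\sigma_0(G)}{1-\sigma_0(G)}I$ (valid because each diagonal entry $\psi_i$ of $\Psi_0$ obeys $\psi_i\ge\sigma_0(G)>0$ by Lemma \ref{LOlemma: Glowerbound}) and the estimate $1-\sigma_1(G)=\Omega(\mu)$ implied by Lemma \ref{LOlemmaGupperbound}. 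Assembling these bounds in the block‑inverse formula gives $\|Y^{-1}\|_2=O(1/\mu)$, i.e. $\sigma_0(Y)=\Omega(\mu)$, and dividing the two bounds yields $\kappa(Y)=\mathcal{O}\!\bigl(\tfrac1\mu\bigl(1+\tfrac{(1-\gamma_1)n+\gamma_1}{\gamma_1\hat h}\sigma_1^2(A)\bigr)\bigr)$.

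The step I expect to be the real obstacle is exactly the lower bound on $\sigma_0(Y)$. Because $R^{T}YR=\Psi+W\mathcal{A}$ is nonsymmetric and both $W$ and (in the directions that matter) $\mathcal{A}$ degenerate on the scale of $\mu$, the naive submultiplicative estimate $\sigma_{\min}(\Psi+W\mathcal{A})\ge\sigma_{\min}(W)\,\sigma_{\min}(\mathcal{A})$ loses an extra factor of $\mu$ and would only give $\Omega(\mu^2)$, which is no improvement over the unpreconditioned system. The structural fact that must be exploited — and quantified through the Schur‑complement bookkeeping above — is that the directions in which $W=I-\Psi$ is nearly singular are precisely the directions in which $\Psi$ is bounded away from $0$ (namely $\sigma_0(G)$ is a positive constant, Lemma \ref{LOlemma: Glowerbound}), so the two summands $\Psi$ and $W\mathcal{A}$ cannot cancel and the relevant cancellations inside $(R^{T}YR)^{-1}$ keep every block at size $O(1/\mu)$.
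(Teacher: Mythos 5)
Your upper bound on $\sigma_1(Y)$ is correct and essentially the paper's own argument. The lower bound, however, has a genuine gap at precisely the step you flag as the obstacle. In the block inverse of $R^TYR$ taken with respect to $\widetilde Y_{22}=\mathcal{A}_{22}$, the $(2,1)$ block is
\[
-\widetilde Y_{22}^{-1}\widetilde Y_{21}\mathcal{D}^{-1}
=-\mathcal{A}_{22}^{-1}\mathcal{A}_{21}\bigl((I-\Psi_0)^{-1}\Psi_0+(\mathcal{A}/\mathcal{A}_{22})\bigr)^{-1}(I-\Psi_0)^{-1}.
\]
The estimates you list give $\|\mathcal{A}_{22}^{-1}\mathcal{A}_{21}\|_2\le\sqrt{\sigma_1(\mathcal{A})}\,\mu^{-1/2}$ (or, more crudely, $\sigma_1(\mathcal{A})/\mu$), an $O(1)$ middle factor, and $\|(I-\Psi_0)^{-1}\|_2=O(1/\mu)$ since $1-\sigma_1(G)=\Omega(\mu)$; termwise assembly therefore yields only $O(\mu^{-3/2})$ for this block, not $O(1/\mu)$. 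The block is in fact $O(1/\mu)$, but only because of a cancellation between the leading $\mathcal{A}_{22}^{-1}\mathcal{A}_{21}$ and the trailing $(I-\Psi_0)^{-1}$ that your bounds do not capture; you acknowledge that ``cancellations'' are needed but never exhibit them, and without them the Schur-complement route does not close. (A secondary issue: your $(2,2)$ block only comes out as $O(\sigma_1(\mathcal{A})/\mu)$, which would degrade the factor multiplying $1/\mu$ in $\kappa(Y)$ from $\sigma_1(\mathcal{A})$ to $\sigma_1(\mathcal{A})^2$.)

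The paper avoids block inversion entirely, and its trick is worth internalizing. Starting from the last line of \eqref{LOmatrixYsimplified}, write $R^TYR=P+W(\mathcal{A}-\mu I)$ with $P=\mathrm{diag}\bigl(\Psi_0+\mu(I-\Psi_0),\,\mu I\bigr)$ positive diagonal and $W=\mathrm{diag}(I-\Psi_0,\,I)$, then right-multiply by $W$: the result is $PW+W(\mathcal{A}-\mu I)W$, where $PW$ is positive diagonal with entries at least $\min\{\sigma_0(G)(1-\sigma_1(G))+\mu(1-\sigma_1(G))^2,\ \mu\}=\Omega(\mu)$ and $W(\mathcal{A}-\mu I)W$ is symmetric positive semidefinite because $\sigma_0(\mathcal{A})\ge\mu$ (Lemma \ref{LOlemma: Acal}). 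Hence $\sigma_0\bigl((R^TYR)W\bigr)=\Omega(\mu)$ by Lemma \ref{lemma: interlace}, and dividing out $W^{-1}$, whose smallest singular value is $\ge 1$, via Lemma \ref{lemma: least nonzero singular value of matrix product} gives $\sigma_0(Y)=\Omega(\mu)$. The essential point your decomposition misses is to peel off the additive term $\mu W$ from $W\mathcal{A}$ so that the non-diagonal remainder becomes PSD after a one-sided multiplication by $W$, rather than attempting to control the unsymmetric product $W\mathcal{A}$ through a block factorization.
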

\begin{proof}
Notice that
\begin{align*}
    &\quad\begin{bmatrix}
    \Psi_0 + \mu (I - \Psi_0)&0\\0&\mu  I
    \end{bmatrix} + \begin{bmatrix}
    I-\Psi_0&0\\0&I
    \end{bmatrix}(\mathcal{A} - \mu  I)\\
    &=\left(\left(\begin{bmatrix}
    \Psi_0 + \mu (I - \Psi_0)&0\\0&\mu  I
    \end{bmatrix} + \begin{bmatrix}
    I-\Psi_0&0\\0&I
    \end{bmatrix}(\mathcal{A} - \mu  I)\right)\begin{bmatrix}
    I-\Psi_0&0\\0&I
    \end{bmatrix}\right)\begin{bmatrix}
    I-\Psi_0&0\\0&I
    \end{bmatrix}^{-1}.
\end{align*}
The matrix in the outer parentheses is nonsingular because it is the summation of a  positive diagonal matrix and a positive semidefinite matrix. The matrix outside the parentheses is positive diagonal. Applying Lemma \ref{lemma: least nonzero singular value of matrix product}, it follows that
\begin{align*}
    \sigma_0(Y) &\geq \sigma_0\left(\left(\begin{bmatrix}
    \Psi_0 + \mu (I - \Psi_0)&0\\0&\mu  I
    \end{bmatrix} + \begin{bmatrix}
    I-\Psi_0&0\\0&I
    \end{bmatrix}(\mathcal{A} - \mu  I)\right)\begin{bmatrix}
    I-\Psi_0&0\\0&I
    \end{bmatrix}\right)\\
    &\qquad \times\sigma_0\left(\begin{bmatrix}
    I-\Psi_0&0\\0&I
    \end{bmatrix}^{-1}\right)\\
    &\geq \sigma_0\left(\begin{bmatrix}
    \Psi_0 + \mu (I - \Psi_0)&0\\0&\mu  I
    \end{bmatrix}\begin{bmatrix}
    I-\Psi_0&0\\0&I
    \end{bmatrix}\right)\sigma_0\left(\begin{bmatrix}
    I-\Psi_0&0\\0&I
    \end{bmatrix}^{-1}\right),
\end{align*}
where the second inequality follows from the fact that,  the first matrix in the first line is the summation of a positive diagonal matrix and a positive semidefinite matrix, and thus we only keep the positive diagonal one. Then, using Lemma \ref{LOlemmaGupperbound} and Lemma \ref{LOlemma: Glowerbound}, we can get
\begin{equation*}
    \begin{aligned}
        \sigma_0(Y) &= \min\left\{ \sigma_0(G)(1-\sigma_1(G)) + \mu (1-\sigma_1(G))^2,\ \mu \right\} \times 1={\Omega}\left(\mu\right).
    \end{aligned}
\end{equation*}
Recall that \eqref{LOsys: mnes} is equivalent to \eqref{LOsys: normal equation system}, thus $Y$ has no zero singular value, so $\sigma_m(Y) = \sigma_0(Y)$.
As for the largest singular value, we have the following bound:
\begin{align*}
    \sigma_1(Y) \leq 1 + 1\times\left(\sigma_1(\mathcal{A}) - \mu \right) \leq 1 + \frac{(1-\gamma_1)n + \gamma_1}{\gamma_1\hat{h}} \sigma_1^2(A).
\end{align*}
The proof is complete.
\end{proof}
We have proved that the condition number of the coefficient matrix of \eqref{LOsys: mnes} is $\mathcal{O}(1/\mu).$ However, when constructing the coefficient matrix, the inverse matrix of $\hat{Z}$ is also needed. Thus the condition number of $\hat{Z}$ also matters.
%.
\begin{lemma}\label{LOlemma: condition number Zhat}
The singular values of $\hat{Z}$ satisfy $\sigma_0(\hat{Z})\geq {\gamma_1\mu }/{\omega^2}$ and $\sigma_1(\hat{Z})\leq 1+\frac{1}{\hat{h}}\sigma_1^2(A_1)$; the condition number of $\hat{Z}$ satisfies $\kappa(\hat{Z}) \leq \frac{1}{\mu } \frac{\omega^2}{\gamma_1}(1 + \frac{1}{\hat{h}}\sigma_1(A)^2)$.
\end{lemma}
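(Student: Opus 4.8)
The plan is to bound $\sigma_0(\hat Z)$ and $\sigma_1(\hat Z)$ directly from the definition $\hat Z = B^T \hat H^{-1} B + D_1 = \frac{1}{\hat h} B^T B + D_1$, using the fact that $\hat Z$ is symmetric positive definite (so singular values coincide with eigenvalues) and that it is a sum of the positive semidefinite matrix $\frac{1}{\hat h} B^T B$ and the positive diagonal matrix $D_1$. First I would recall $B = A_1 F_1^{-1/2}$, so $B^T B = F_1^{-1/2} A_1^T A_1 F_1^{-1/2}$, and collect the elementary facts I need from earlier in the excerpt: Lemma \ref{LOlemma: bound: Psi F D} gives $D_{1,[i]} \in [\gamma_1\mu/\omega^2,\,1]$ and $F_{1,[i]}\in[\gamma_1\mu/\omega^2 + 1,\,2]$; the multiplicativity bounds Lemma \ref{lemma: largest singular value of matrix product} and Lemma \ref{lemma: least nonzero singular value of matrix product} for singular values of products; and the identity $\hat h = \gamma_1 \sigma_0^2(A)/\omega^2$.

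For the lower bound: since $\frac{1}{\hat h}B^T B \succeq 0$, Weyl's inequality (or simply the Rayleigh-quotient argument) gives $\sigma_0(\hat Z) = \lambda_{\min}(\hat Z) \ge \lambda_{\min}(D_1) = \min_i D_{1,[i]} \ge \gamma_1\mu/\omega^2$, using Lemma \ref{LOlemma: bound: Psi F D}. For the upper bound: $\sigma_1(\hat Z) = \lambda_{\max}(\hat Z) \le \lambda_{\max}\!\big(\tfrac{1}{\hat h}B^TB\big) + \lambda_{\max}(D_1) \le \tfrac{1}{\hat h}\sigma_1^2(B) + 1$. Then I bound $\sigma_1(B) = \sigma_1(A_1 F_1^{-1/2}) \le \sigma_1(A_1)\,\sigma_1(F_1^{-1/2}) = \sigma_1(A_1)/\sigma_0(F_1)^{1/2} \le \sigma_1(A_1)$ since $F_{1,[i]} \ge 1$ by Lemma \ref{LOlemma: bound: Psi F D}. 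This yields $\sigma_1(\hat Z) \le 1 + \tfrac{1}{\hat h}\sigma_1^2(A_1)$, and since $A_1$ is a submatrix of $A$ we have $\sigma_1(A_1)\le\sigma_1(A)$, giving the stated form $1 + \tfrac{1}{\hat h}\sigma_1^2(A)$ for the final condition-number bound.

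Dividing the two bounds gives
\begin{equation*}
    \kappa(\hat Z) = \frac{\sigma_1(\hat Z)}{\sigma_0(\hat Z)} \le \frac{\omega^2}{\gamma_1\mu}\left(1 + \frac{1}{\hat h}\sigma_1^2(A)\right),
\end{equation*}
which is the claim. I do not expect any serious obstacle here: the only mild subtlety is being careful that $\hat Z$ is genuinely positive definite so that its singular values are its eigenvalues and the additive Weyl bounds apply cleanly — this follows because $D_1$ is positive definite (its diagonal entries are bounded below by $\gamma_1\mu/\omega^2 > 0$) and $\frac{1}{\hat h}B^TB$ is positive semidefinite, and it is also already asserted in the cited work \cite{chai2007preconditioning} that $\hat Z$ (there $Z$, with $H$ replaced by $\hat H$) is nonsingular. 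A secondary point to state carefully is the submatrix inequality $\sigma_1(A_1)\le\sigma_1(A)$, which is immediate from the variational characterization of the largest singular value restricted to a coordinate subspace.
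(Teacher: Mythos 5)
Your proposal is correct and follows essentially the same route as the paper's proof: lower-bound $\sigma_0(\hat Z)$ by $\sigma_0(D_1)\ge\gamma_1\mu/\omega^2$, upper-bound $\sigma_1(\hat Z)$ by $\tfrac{1}{\hat h}\sigma_1^2(B)+1$ with $\sigma_1(B)\le\sigma_1(A_1)\,\sigma_0(F_1)^{-1/2}\le\sigma_1(A_1)\le\sigma_1(A)$, and divide. Your explicit appeal to Weyl's inequality and positive definiteness merely makes precise the justification the paper states more tersely ("both terms in $\hat Z$ are positive definite").
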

\begin{proof}
By the definition of $\hat{Z}$ that $\hat{Z} = B\hat{H}^{-1}B^T + D_1$ and Lemma \ref{LOlemma: bound: Psi F D}, the smallest nonzero singular value of $\hat{Z}$ satisfies $\sigma_0(\hat{Z}) \geq 0 + \sigma_0(D_1) \geq {\gamma_1\mu }/{\omega^2}$. The largest singular value of $\hat{Z}$ satisfies
\begin{align*}
    \sigma_1(\hat{Z}) &\leq \frac{1}{\hat{h}}\sigma_1(BB^T) + \sigma_1(D_1)\\
                    &\leq \frac{1}{\hat{h}}\sigma_1(A_1)^2\sigma_0(F_1)^{-1} + \sigma_1(D_1)\\
                    &\leq \frac{1}{\hat{h}}\sigma_1(A_1)^2  + 1,
\end{align*}
where the first inequality holds because both terms in $\hat{Z}$ are positive definite, the second inequality follows from Lemma \ref{lemma: largest singular value of matrix product}, and the third follows from Lemma \ref{LOlemma: bound: Psi F D}. Combining the above with $\sigma_1(A_1)\leq \sigma_1(A)$, we have the claimed result.
\end{proof}

In the sequel we use $\kappa_Y$ and $\kappa(Y)$ (resp. $\kappa_{\hat{Z}}$ and $\kappa(\hat{Z})$) interchangeably.  In the next section, we describe how to use QLSAs and QTAs to solve \eqref{LOsys: RP-RAS}.

%%%%%%%%%%%%%%%%%%%%%%%%%%%%%%%%%%%%%%%%%%%%%%%%%%%%%

%%%%%%%%%%%%%%%%%%%%%%%%%%%%%%%%%%%%%%%%%%%%%%%%%%%%%
\subsection{Solve \eqref{LOsys: RP-RAS} w. QLSAs \& QTAs}\label{LOsec: solve}
%%%%%%%%%%%%%%%%%%%%%%%%%%%%%%%%%%%%%%%%%%%%%%%%%%%%%
In this work, we follow the II-QIPM introduced in \cite{mohammadisiahroudi2022efficient} and use the QLSA proposed in \cite{gilyen2018quantum} as well as the QTA proposed in \cite{van2023quantum} to solve \eqref{LOsys: RP-RAS}. 
In this section, we discuss the block-encoding of  \eqref{LOsys: RP-RAS}.
We assume that we have access to quantum RAM (QRAM) and have the initial data stored in QRAM.
{For those operations that contribute polylogarithmic overhead, we say those operations can be implemented \textit{efficiently} without spelling out their complexity.}
We also assume we can efficiently construct diagonal matrices from vectors with the diagonal entries being powers of the entries of vectors. For example, if we have a vector $x$ in QRAM, then we can efficiently construct $X$ and $X^{-1/2}$ in QRAM. Finally, we assume that we can efficiently construct submatrices of any matrix stored in QRAM.

Provided access to QRAM, the complexity associated with block-encoding the involved matrices and preparing a quantum state encoding of the right-hand-side amounts to polylogarithmic overhead, which is dominated by the cost of the block-encoding of negative powers of matrices and that of QLSAs and QTAs.
{Thus, we} ignore the cost of the block-encoding of matrices here and, in turn, we can ignore the accuracy parameter of block-encoding in our analysis -- we keep the accuracy parameters of block-encoding in our analysis but we do not analyze their value.

We start with the block-encoding of $B$. Recall that the definition of $B$ is provided in Lemma \ref{LOlemma: rae} as $B = A_1 F_1^{-1/2}$. We have the following lemma.
\begin{lemma}[Block-encodings of $A$ and $A_1$]\label{LOlemma: BE: A}
    An $\left(\|A\|_F, \log(n) +2, \epsilon_A \right)$-block-encoding of $A$ and an $\left(\|A_1\|_F, \log(n)+2,\epsilon_{A_1}  \right)$-block-encoding of $A_1$ can be implemented efficiently.
\end{lemma}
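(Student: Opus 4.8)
The plan is to invoke the standard QRAM-based block-encoding construction for a matrix stored in quantum memory, and then to treat $A_1$ as a submatrix of $A$. Concretely, under Assumption~\ref{LOassumption: integer data} the entries of $A$ are integers stored in QRAM, so by the data-structure results underpinning QLSAs (e.g. the constructions in \cite{chakraborty2018power, gilyen2018quantum}), one can implement in time polylogarithmic in $n$, $m$, and the bit-length of the data a unitary $U_A$ acting on a register of $\log(mn) + O(1)$ qubits such that the top-left block of $U_A$ equals $A/\|A\|_F$ up to spectral error $\epsilon_A$. Since $m\le n$, the ancilla count $\log(mn)+O(1)$ is subsumed by $\log(n)+2$ after rounding, which gives the claimed $(\|A\|_F, \log(n)+2, \epsilon_A)$-block-encoding of $A$.

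For $A_1$, I would not re-encode from scratch but instead use the assumption, stated at the beginning of Section~\ref{LOsec: solve}, that submatrices of any matrix in QRAM can be constructed efficiently. Thus $A_1$, being the column submatrix of $A$ corresponding to the index set ``$1$'' from Definition~\ref{LOdef: rule}, is itself available in QRAM (the partition is recomputed at each iteration but that is a classical bookkeeping step), and the same QRAM block-encoding construction yields a $(\|A_1\|_F, \log(n_1)+2, \epsilon_{A_1})$-block-encoding, where $n_1$ is the number of columns in the ``$1$'' block. Since $n_1 \le n$, the normalization $\|A_1\|_F \le \|A\|_F$ and the ancilla bound $\log(n_1)+2 \le \log(n)+2$ both hold, so padding the ancilla register up to $\log(n)+2$ qubits (which only affects the encoding by trivial identity tensor factors) gives exactly the stated form.

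The only real subtlety — and the step I would flag as the main obstacle, though it is a modelling point rather than a hard computation — is making precise what ``efficiently'' means and confirming that the error parameters $\epsilon_A, \epsilon_{A_1}$ can be taken as free inputs absorbed into the polylogarithmic overhead. This is already handled by the discussion preceding the lemma: the paper explicitly declares that the cost of block-encoding the involved matrices is polylogarithmic and that the block-encoding accuracy parameters are carried through the analysis symbolically without being optimized. Given that convention, the lemma is essentially a direct citation of the QRAM block-encoding primitive specialized to $A$ and one of its submatrices, and no further estimates are needed.
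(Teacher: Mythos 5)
Your proposal matches the paper's proof, which likewise just invokes the QRAM block-encoding primitive (Lemma 50 of \cite{gilyen2018quantum}) together with the section's standing assumptions that the data and its submatrices are efficiently accessible in QRAM. The extra bookkeeping you supply on ancilla counts and padding is consistent with, and slightly more explicit than, the paper's one-line argument.
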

\begin{proof}
    The results follow directly from the two assumptions we just made earlier in this section and Lemma 50 from \cite{gilyen2018quantum}.
\end{proof}
The following lemma indicates that the block-encoding of a diagonal matrix with entries bounded between $-1$ and $1$ can be implemented easily.
\begin{lemma}[Block encode diagonal matrices]\label{LOlemma: BE: diagonal}
    Let $M\in\mathbb{R}^{n\times n}$ and $\Tilde{M}\in\mathbb{R}^{n\times n}$ be two diagonal matrices and let us assume that for all $i\in\{1,\dots,n\}$, the following conditions hold:
    $$M_{[i]}\in [-1,1],\ \Tilde{M}_{[i]}\in [-1,1],\  M_{[i]}^2 + \Tilde{M}_{[i]}^2 = 1.$$
    Then, a $\left(1, \mathcal{O}(\log(n)), 0 \right)$-block-encoding of $M$ can be implemented efficiently\footnote{Note that a $(p_1, p_2, p_3)$-block-encoding can be implemented efficiently indicates that the cost of the block-encoding is polylogarithmic in terms of $p_1, p_2$, and $p_3$.}.
\end{lemma}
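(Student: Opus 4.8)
The plan is to write down an explicit unitary that block-encodes $M$ with subnormalization $1$ and zero error, and then to observe that its circuit size is polylogarithmic in $n$ under the QRAM access assumed at the start of this section. Recall that a $(1,\mathcal{O}(\log n),0)$-block-encoding of $M\in\mathbb{R}^{n\times n}$ is a unitary $U$ acting on the $\lceil\log_2 n\rceil$-qubit index register together with an $\mathcal{O}(\log n)$-qubit ancilla register such that $(\langle 0|\otimes I)\,U\,(|0\rangle\otimes I)=M$ exactly. The natural candidate is a single-qubit \emph{multiplexed} (uniformly controlled) rotation: put
\[
U=\sum_{i=1}^{n} |i\rangle\langle i|\otimes\begin{bmatrix} M_{[i]} & -\tilde M_{[i]}\\ \tilde M_{[i]} & M_{[i]}\end{bmatrix},
\]
where the $2\times2$ block is orthogonal — hence unitary, with operator norm $1$ — precisely because the hypotheses give $M_{[i]},\tilde M_{[i]}\in[-1,1]$ and $M_{[i]}^2+\tilde M_{[i]}^2=1$. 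Projecting the ancilla qubit onto $|0\rangle$ then yields $(\langle 0|\otimes I)\,U\,(|0\rangle\otimes I)=\sum_i M_{[i]}\,|i\rangle\langle i|=M$, so the subnormalization is $1$ and the error is $0$.

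It remains to implement $U$ efficiently. Writing the block as a rotation by the angle $\theta_i$ with $\cos\theta_i=M_{[i]}$ and $\sin\theta_i=\tilde M_{[i]}$, one first coherently computes $\theta_i$ (to a fixed number of bits) into an $\mathcal{O}(\log n)$-qubit scratch register, controlled on the index $|i\rangle$ — this uses QRAM reads of the entries of $M$ and $\tilde M$ (the latter being determined by $M$ up to signs) together with simple arithmetic, all allowed by the access model assumed earlier in this section; then one applies the single-qubit rotation controlled by the scratch register; and finally one uncomputes the scratch register. The three stages cost polylog in $n$, and the scratch register together with the rotation qubit use $\mathcal{O}(\log n)$ ancilla, all returned to $|0\rangle$; this is exactly what "implemented efficiently" means in the sense of the footnote. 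One could equivalently obtain the statement by specializing Lemma 50 of \cite{gilyen2018quantum} to diagonal matrices, in the spirit of the proof of Lemma \ref{LOlemma: BE: A}.

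The only delicate point is the claim that the error is exactly $0$. Strictly, $\theta_i$ is not representable in a finite register, so the realized circuit block-encodes a diagonal matrix that is $\varepsilon$-close to $M$ rather than $M$ itself. This is handled exactly as the paper handles every other block-encoding accuracy: such $\varepsilon$ can be pushed below any prescribed threshold at only polylogarithmic extra cost, and, consistent with the convention announced earlier in this section, we retain block-encoding accuracy parameters in the statements without analyzing their numerical value; in the exact-arithmetic idealization the construction is literally error-free. I expect this bookkeeping, rather than the construction itself, to be the only mild obstacle, since checking the subnormalization $1$ and the $\mathcal{O}(\log n)$ ancilla count is immediate from the multiplexed-rotation form.
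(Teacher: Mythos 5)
Your proof is correct and follows essentially the same route as the paper: both exhibit the $2n\times 2n$ orthogonal dilation of $M$ (the paper as the reflection $\left[\begin{smallmatrix} M & \tilde M\\ \tilde M & -M\end{smallmatrix}\right]$, you as the equivalent multiplexed rotation), verify unitarity from $M_{[i]}^2+\tilde M_{[i]}^2=1$, and appeal to the QRAM access model / Lemma 50 of \cite{gilyen2018quantum} for efficient implementation. Your closing remark that the "$0$ error" holds only in the exact-arithmetic idealization is a fair caveat the paper leaves implicit, but it does not change the argument.
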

\begin{proof}
    By construction,
    \begin{equation*}
        \begin{bmatrix}
            M & \Tilde{M}\\ \Tilde{M}& -M
        \end{bmatrix}
        \begin{bmatrix}
            M & \Tilde{M}\\ \Tilde{M}& -M
        \end{bmatrix}^T
        =
        \begin{bmatrix}
            M^2 + \Tilde{M}^2 & 0\\ 0& M^2 + \Tilde{M}^2
        \end{bmatrix}= I.
    \end{equation*}
    Then according Lemma 50 from \cite{gilyen2018quantum} the conclusion holds.
\end{proof}
Based on this lemma, we can have the following corollary.
\begin{corollary}\label{LOcoro: BE diagonal}
Let $M\in \mathbb{R}^{n\times n}$ be a diagonal matrix and $\|M\|_{\max}$ be the maximum of the absolute value of the entries of $M$. Then, an $\left(\alpha_M, \mathcal{O}(\log(n)), 0 \right)$-block-encoding of $M$ can be implemented efficiently, where $\alpha_M\geq \|M\|_{\max}$.
\end{corollary}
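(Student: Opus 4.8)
The plan is to reduce Corollary~\ref{LOcoro: BE diagonal} to Lemma~\ref{LOlemma: BE: diagonal} by rescaling. Given a diagonal matrix $M$ with $\|M\|_{\max}\le\alpha_M$, set $M' = M/\alpha_M$; then every diagonal entry of $M'$ lies in $[-1,1]$, so the hypothesis $M'_{[i]}\in[-1,1]$ is satisfied. First I would construct the ``complementary'' diagonal matrix $\tilde M'$ with entries $\tilde M'_{[i]} = \sqrt{1-(M'_{[i]})^2}$, which is well-defined and real precisely because $(M'_{[i]})^2\le 1$, and which satisfies $(M'_{[i]})^2 + (\tilde M'_{[i]})^2 = 1$ and $\tilde M'_{[i]}\in[0,1]\subseteq[-1,1]$. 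By the efficient-construction assumptions made at the start of Section~\ref{LOsec: solve} — in particular that diagonal matrices whose entries are powers of the entries of a stored vector can be built efficiently — the matrix $\tilde M'$ can be assembled efficiently from (the vector underlying) $M$.

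Next I would invoke Lemma~\ref{LOlemma: BE: diagonal} with the pair $(M', \tilde M')$: this yields a $(1, \mathcal{O}(\log n), 0)$-block-encoding of $M'$, implemented efficiently. Finally I would undo the rescaling: a $(1,a,0)$-block-encoding of $M' = M/\alpha_M$ is, by the definition of block-encoding, an $(\alpha_M, a, 0)$-block-encoding of $M$ (multiplying the subnormalization constant by $\alpha_M$ exactly compensates for the division). Hence an $(\alpha_M, \mathcal{O}(\log n), 0)$-block-encoding of $M$ is obtained, and the whole construction costs only polylogarithmic overhead, so it is efficient in the sense defined in this section. This completes the argument.

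The argument is almost entirely bookkeeping, so there is no serious obstacle; the only point requiring a moment's care is checking that the rescaling is legitimate — i.e.\ that passing from a $(\alpha,a,\epsilon)$-block-encoding of $cM$ to a $(\alpha/c,\,a,\,\epsilon/c)$-block-encoding of $M$ (here with $\epsilon = 0$, $c = 1/\alpha_M$) is a valid and efficient operation, which follows directly from the definition of block-encoding and does not degrade the ancilla count or the error. One should also note the mild subtlety that $\alpha_M$ is only assumed to be an \emph{upper} bound on $\|M\|_{\max}$, not equal to it; this is harmless, since $M/\alpha_M$ still has all entries in $[-1,1]$, which is all Lemma~\ref{LOlemma: BE: diagonal} requires.
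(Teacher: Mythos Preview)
Your argument is correct and is exactly the intended reduction: the paper does not spell out a proof for this corollary but simply states it ``based on this lemma'' and immediately remarks that one can ``efficiently implement the block-encoding of diagonal matrices with proper scaling,'' which is precisely the rescale-and-apply-Lemma~\ref{LOlemma: BE: diagonal} step you carried out.
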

Then we can efficiently implement the block-encoding of diagonal matrices with proper scaling. Now we discuss the block-encoding of matrix $B$.
\begin{lemma}\label{LOlemma: BE B}
  An \[\left(\|A_1\|_F \|F_1^{-1/2}\|_{\max}, \mathcal{O}(\log(n)), \epsilon_{B} \right) \]-block-encoding of matrix $B$ can be implemented efficiently. 
\end{lemma}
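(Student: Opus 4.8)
The plan is to express $B = A_1 F_1^{-1/2}$ as a product of two matrices whose block-encodings are already available, and then apply the standard product rule for block-encodings. First I would invoke Lemma~\ref{LOlemma: BE: A} to obtain an $\left(\|A_1\|_F, \log(n)+2, \epsilon_{A_1}\right)$-block-encoding of $A_1$, implementable efficiently.

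Next I would block-encode the diagonal factor $F_1^{-1/2}$. Since $F_1 = I + D_1$ with $D_1 = X_1^{-1}S_1$ diagonal, the matrix $F_1^{-1/2}$ is diagonal, and by Lemma~\ref{LOlemma: bound: Psi F D} each $F_{1,[i]} \ge 1$, so $\|F_1^{-1/2}\|_{\max} \le 1$ and the matrix is well-scaled. Using the QRAM assumptions of this section — that submatrices of stored data and diagonal matrices whose entries are powers of stored vector entries (here computed from $x$ and $s$) can be formed efficiently — Corollary~\ref{LOcoro: BE diagonal} provides an $\left(\|F_1^{-1/2}\|_{\max}, \mathcal{O}(\log n), 0\right)$-block-encoding of $F_1^{-1/2}$, also implementable efficiently.

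Then I would compose the two via the product-of-block-encodings construction of \cite{gilyen2018quantum}: if $U$ is an $(\alpha, a, \delta)$-block-encoding of a matrix $M$ and $V$ is a $(\beta, b, \varepsilon)$-block-encoding of a matrix $M'$ acting on the same (padded) register, then their composition is an $(\alpha\beta,\, a+b,\, \alpha\varepsilon + \beta\delta)$-block-encoding of $MM'$. Applying this with $M = A_1$ and $M' = F_1^{-1/2}$ yields a block-encoding of $B = A_1 F_1^{-1/2}$ with subnormalization $\|A_1\|_F\,\|F_1^{-1/2}\|_{\max}$, with $\log(n)+2+\mathcal{O}(\log n) = \mathcal{O}(\log n)$ ancilla qubits, and with accuracy $\|A_1\|_F\cdot 0 + \|F_1^{-1/2}\|_{\max}\,\epsilon_{A_1} =: \epsilon_B$. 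Since each ingredient is efficient and the composition adds only a constant number of uses plus polylogarithmic bookkeeping, the resulting block-encoding of $B$ is implementable efficiently, which is exactly the statement (with $\epsilon_B$ tracked but, consistent with the convention of this section, not evaluated).

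The step I would be most careful about is dimensional bookkeeping rather than anything deep: $A_1$ is $m\times \ell$ (with $\ell$ the size of partition~$1$) while $F_1^{-1/2}$ is $\ell\times\ell$, so both must be embedded as square operators on a common register of $\lceil \log_2(\max\{m,\ell,n\})\rceil$ qubits (with zero padding) before the product rule applies; I would check that this padding is consistent with the embeddings produced by Lemma~\ref{LOlemma: BE: A} and Corollary~\ref{LOcoro: BE diagonal} and does not alter the reported subnormalization, ancilla count, or error. A minor secondary point is confirming that $F_1^{-1/2}$ is preparable in QRAM under the stated assumptions; the vector of its diagonal entries is an $n$-vector computable from $x$ and $s$, so this contributes at most polylogarithmic overhead and is covered by the ``efficient'' convention adopted here.
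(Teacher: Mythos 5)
Your proposal is correct and follows essentially the same route as the paper: block-encode $A_1$ via Lemma~\ref{LOlemma: BE: A}, block-encode the diagonal factor $F_1^{-1/2}$ via Corollary~\ref{LOcoro: BE diagonal}, and compose them with the product-of-block-encodings rule of \cite{gilyen2018quantum} to get subnormalization $\|A_1\|_F\|F_1^{-1/2}\|_{\max}$, $\mathcal{O}(\log n)$ ancillas, and error $\epsilon_B$. Your extra care about padding the rectangular $A_1$ and the diagonal factor to a common register is a reasonable bookkeeping point that the paper leaves implicit.
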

\begin{proof}
    Using Lemma \ref{LOlemma: BE: A}, we can implement an \[\left(\|A_1\|_F, \log(n_1)+2,\epsilon_{A_1}  \right)\]-block-encoding of $A_1$ efficiently.
    From Corollary \ref{LOcoro: BE diagonal}, we can implement an \[\left(\|F_1^{-1/2}\|_{\max}, \mathcal{O}(\log(n)), 0\right)\]-block-encoding of $F_1^{-1/2}$ efficiently. Then, by Lemma 50 of \cite{gilyen2018quantum}, we can implement an \[\left(\|A_1\|_F \|F_1^{-1/2}\|_{\max}, \mathcal{O}(\log(n)), \epsilon_{B} \right) \]-block-encoding of $B$ efficiently.
\end{proof}
Now we are ready to discuss the block-encoding of  matrix $H$ that is defined in Lemma \ref{LOlemma: rae} as $H=A\;{\rm diag}\left(F_1^{-1}, D_2^{-1} \right)A^T$.

\begin{lemma}\label{LOlemma: BE H}
    An \[\left(\|A\|_F^2\max\{\|F_1^{-1}\|_{\max} , \|D_2^{-1}\|_{\max} \}, \mathcal{O}(\log(n)), \epsilon_{H}  \right)\]-block-encoding of matrix $H$ can be implemented efficiently. 
\end{lemma}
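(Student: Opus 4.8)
The plan is to build the block-encoding of $H = A\,{\rm diag}(F_1^{-1},D_2^{-1})\,A^T$ by composing block-encodings of its three factors: a block-encoding of $A$ (equivalently $A^T$), a block-encoding of the middle diagonal matrix $M_D := {\rm diag}(F_1^{-1},D_2^{-1})$, and again a block-encoding of $A^T$ (equivalently $A$). The raw ingredients are already available: Lemma~\ref{LOlemma: BE: A} gives an $(\|A\|_F,\log(n)+2,\epsilon_A)$-block-encoding of $A$, and taking its adjoint/transpose (which preserves the normalization factor and additive error and costs only polylogarithmic overhead) gives a block-encoding of $A^T$ with the same parameters. For the middle factor, $M_D$ is diagonal with $\|M_D\|_{\max} = \max\{\|F_1^{-1}\|_{\max},\|D_2^{-1}\|_{\max}\}$, so Corollary~\ref{LOcoro: BE diagonal} furnishes an $(\alpha_{M_D},\mathcal{O}(\log(n)),0)$-block-encoding with $\alpha_{M_D} = \max\{\|F_1^{-1}\|_{\max},\|D_2^{-1}\|_{\max}\}$.

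First I would invoke the product rule for block-encodings (Lemma~50 of \cite{gilyen2018quantum}, exactly as used in the proofs of Lemma~\ref{LOlemma: BE B} and the earlier lemmas) to multiply these three block-encodings together. Multiplying an $(\alpha_1,a_1,\epsilon_1)$-block-encoding of $U_1$ with an $(\alpha_2,a_2,\epsilon_2)$-block-encoding of $U_2$ yields an $(\alpha_1\alpha_2, a_1+a_2, \alpha_1\epsilon_2+\alpha_2\epsilon_1)$-block-encoding of $U_1U_2$. Applying this twice to the chain $A^T \cdot M_D \cdot A$ (or $A \cdot M_D \cdot A^T$, whichever orientation matches $H$) gives the normalization factor $\|A\|_F \cdot \alpha_{M_D}\cdot\|A\|_F = \|A\|_F^2\,\max\{\|F_1^{-1}\|_{\max},\|D_2^{-1}\|_{\max}\}$, an ancilla count of $\mathcal{O}(\log(n)) + (\log(n)+2) + (\log(n)+2) = \mathcal{O}(\log(n))$, and some accumulated additive error that I would simply name $\epsilon_H$ (a linear combination of $\epsilon_A$ and the middle encoding's error $0$, scaled by the normalization factors) — consistent with the excerpt's stated convention of carrying the block-encoding accuracy parameters symbolically without pinning down their values.

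The remaining bookkeeping steps are: confirm that each composition is applied in the correct order so the ancilla registers do not clash (a standard convention), and note that since $H$ is symmetric positive definite and both orientations $A\,M_D\,A^T$ are equal, there is no ambiguity. The cost at each step is dominated by the block-encoding of $A$ from Lemma~\ref{LOlemma: BE: A} together with the diagonal block-encoding from Corollary~\ref{LOcoro: BE diagonal}, both of which are efficient by hypothesis (QRAM access plus the assumptions stated at the start of Section~\ref{LOsec: solve}), and the product rule itself adds only polylogarithmic overhead; hence the whole construction is efficient.

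I do not anticipate a genuine obstacle here — this is a routine application of the composition rules — but the one point requiring mild care is the normalization factor. One must use $\|A\|_F$ (not $\|A\|_2$) on \emph{both} occurrences, so that the product normalization is $\|A\|_F^2\,\alpha_{M_D}$ exactly as claimed, and one must make sure the diagonal middle factor is block-encoded with the subscript-max norm $\|\cdot\|_{\max}$ rather than the spectral norm, which is precisely what Corollary~\ref{LOcoro: BE diagonal} provides. Everything else is a transcription of the same argument already carried out for $B$ in Lemma~\ref{LOlemma: BE B}.
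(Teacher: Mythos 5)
Your proposal is correct and follows essentially the same route as the paper: block-encode ${\rm diag}(F_1^{-1},D_2^{-1})$ via Corollary~\ref{LOcoro: BE diagonal} with normalization $\max\{\|F_1^{-1}\|_{\max},\|D_2^{-1}\|_{\max}\}$, block-encode $A$ via Lemma~\ref{LOlemma: BE: A} with normalization $\|A\|_F$, and compose by the product rule for block-encodings from \cite{gilyen2018quantum}, multiplying normalizations and adding ancillas. The only cosmetic difference is the citation of the product lemma's number (the paper's own proof of Lemma~\ref{LOlemma: BE H} cites Lemma~53 of \cite{gilyen2018quantum}, the product-of-block-encodings statement), which does not affect the substance.
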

\begin{proof}
    From Corollary \ref{LOcoro: BE diagonal}, we can implement a \[\left( \max\{\|F_1^{-1}\|_{\max} , \|D_2^{-1}\|_{\max} \}, \mathcal{O}(\log(n)), 0 \right) \]-block-encoding of ${\rm diag}\left(F_1^{-1}, D_2^{-1} \right)$ efficiently. Following Lemma 53 of \cite{gilyen2018quantum} and Lemma \ref{LOlemma: BE: A}, we can implement a \[\left(\|A\|_F^2\max\{\|F_1^{-1}\|_{\max} , \|D_2^{-1}\|_{\max} \}, \mathcal{O}(\log(n)),  \epsilon_{H} \right)\]-block-encoding of matrix $H$ efficiently.
\end{proof}
Then we discuss the block-encoding of matrix
$\hat{Z} = B^T\hat{H}^{-1}B + D_1 = \frac{1}{\hat{h}} B^TB + D_1$.
\begin{lemma}\label{LOlemma: BE Z hat}
    An  \[\left(\alpha_H\|A_1\|_F^2 \|F_1^{-1/2}\|_{\max}^2, \mathcal{O}(\log(n)), \epsilon_{\hat{Z}} \right)\]-block-encoding of $\hat{Z}$ can be implemented efficiently, where $\alpha_H = \mathcal{O}\left( 1/\hat{h}\right)$.
\end{lemma}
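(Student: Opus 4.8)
The plan is to assemble a block-encoding of $\hat{Z} = B^{T}\hat{H}^{-1}B + D_{1} = \tfrac{1}{\hat{h}}B^{T}B + D_{1}$ from the block-encodings already in hand, using the standard composition rules for block-encodings in \cite{gilyen2018quantum} (adjoint, product, scalar multiple, and linear combination), and then to control the subnormalization this produces by invoking the entrywise estimates of Lemma \ref{LOlemma: bound: Psi F D}. Nothing here is deep; the work lies in tracking how subnormalizations multiply and add.

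First I would take the adjoint of the $\left(\|A_{1}\|_{F}\|F_{1}^{-1/2}\|_{\max}, \mathcal{O}(\log n), \epsilon_{B}\right)$-block-encoding of $B$ from Lemma \ref{LOlemma: BE B}, which is an efficient block-encoding of $B^{T}$ with the same parameters. Applying the product rule (Lemma 53 of \cite{gilyen2018quantum}) to the block-encodings of $B^{T}$ and $B$ then yields, efficiently, a $\left(\|A_{1}\|_{F}^{2}\|F_{1}^{-1/2}\|_{\max}^{2}, \mathcal{O}(\log n), \epsilon'\right)$-block-encoding of $B^{T}B$. Since $\hat{H}^{-1} = \tfrac{1}{\hat{h}}I$ is diagonal, Corollary \ref{LOcoro: BE diagonal} provides an efficient $\left(\alpha_{H}, \mathcal{O}(\log n), 0\right)$-block-encoding of it for any $\alpha_{H} \geq \|\hat{H}^{-1}\|_{\max} = 1/\hat{h}$, so we fix $\alpha_{H} = \mathcal{O}(1/\hat{h})$; multiplying this into the block-encoding of $B^{T}B$ produces a block-encoding of $B^{T}\hat{H}^{-1}B$ with subnormalization $\alpha_{H}\|A_{1}\|_{F}^{2}\|F_{1}^{-1/2}\|_{\max}^{2}$. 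Next I would block-encode $D_{1}$ with Corollary \ref{LOcoro: BE diagonal}; since Lemma \ref{LOlemma: bound: Psi F D} gives $\|D_{1}\|_{\max}\leq 1$, its subnormalization may be taken to be $1$. Finally, the linear-combination rule (Lemma 52 of \cite{gilyen2018quantum}) applied to the block-encodings of $B^{T}\hat{H}^{-1}B$ and $D_{1}$ outputs an efficient block-encoding of $\hat{Z}$ with $\mathcal{O}(\log n)$ ancilla qubits, accuracy $\epsilon_{\hat{Z}}$ determined by the constituent accuracies (kept symbolic, as per the convention of this section), and subnormalization $\alpha_{H}\|A_{1}\|_{F}^{2}\|F_{1}^{-1/2}\|_{\max}^{2} + 1$.

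The only point requiring care --- and the closest thing to an obstacle --- is that additive $+1$ from the $D_{1}$ summand, whereas the statement asserts the subnormalization is simply $\alpha_{H}\|A_{1}\|_{F}^{2}\|F_{1}^{-1/2}\|_{\max}^{2}$. This is handled by absorbing the constant into the $\mathcal{O}(\cdot)$ hidden in $\alpha_{H}$: because $\hat{h} = \gamma_{1}\sigma_{0}^{2}(A)/\omega^{2}$ is small, so $1/\hat{h}$ is large, while $\|D_{1}\|_{\max}\leq 1$ by Lemma \ref{LOlemma: bound: Psi F D}, the $D_{1}$ contribution is of lower order than the $\tfrac{1}{\hat{h}}B^{T}B$ contribution and is swallowed by enlarging $\alpha_{H}$ by at most an $\mathcal{O}(1/\hat{h})$ amount. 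This yields the claimed $\left(\alpha_{H}\|A_{1}\|_{F}^{2}\|F_{1}^{-1/2}\|_{\max}^{2}, \mathcal{O}(\log n), \epsilon_{\hat{Z}}\right)$-block-encoding of $\hat{Z}$ with $\alpha_{H} = \mathcal{O}(1/\hat{h})$.
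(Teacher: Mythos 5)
Your proposal is correct and follows essentially the same route as the paper: build a block-encoding of $B^TB$ from Lemma~\ref{LOlemma: BE B} via Lemma 53 of \cite{gilyen2018quantum}, block-encode $D_1$ using $\|D_1\|_{\max}\leq 1$ from Lemma~\ref{LOlemma: bound: Psi F D}, combine via Lemma 52, and absorb the resulting additive $+1$ in the subnormalization into $\alpha_H = \mathcal{O}(1/\hat{h})$. The only (cosmetic) difference is that you introduce the factor $1/\hat{h}$ by multiplying in a separate diagonal block-encoding of $\hat{H}^{-1}$, whereas the paper carries it as a coefficient in the linear-combination step; both yield the same final subnormalization.
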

\begin{proof}
    From Lemma 53 of \cite{gilyen2018quantum} and Lemma \ref{LOlemma: BE B}, we can implement an \[\left(\|A_1\|_F^2 \|F_1^{-1/2}\|_{\max}^2, \mathcal{O}(\log(n)), \epsilon_{B^TB} \right) \]-block-encoding of $B^TB$ efficiently.
    From Lemma \ref{LOlemma: bound: Psi F D}, we have $\|D_1\|_{\max}\leq 1$. Then from Corollary \ref{LOcoro: BE diagonal}, we can implement an \[\left(\|A_1\|_F^2 \|F_1^{-1/2}\|_{\max}^2, \mathcal{O}(\log(n)), 0  \right)\]-block-encoding of $\|A_1\|_F^2 \|F_1^{-1/2}\|_{\max}^2D_1$ efficiently. 
    From Lemma 52 of \cite{gilyen2018quantum}, we can implement an \[\left(\alpha_H\|A_1\|_F^2 \|F_1^{-1/2}\|_{\max}^2, \mathcal{O}(\log(n)), \epsilon_{\hat{Z}} \right)\]-block-encoding of $\hat{Z}$ efficiently, where
    \begin{equation*}
        \begin{aligned}
            \alpha_H &=  1/\hat{h}+ 1/\|A_1\|_F^2 \|F_1^{-1/2}\|_{\max}^2 =\mathcal{O}\left( 1/\hat{h}  \right).
        \end{aligned}
    \end{equation*}
    The last equation follows from Lemma \ref{LOlemma: bound: Psi F D}.
\end{proof}
Then we discuss the block-encoding of  $\hat{Z}^{-1}$. 
\begin{lemma}\label{LOlemma: BE Z inv}
    A \[\left(2/\|\hat{Z}\|_2, \mathcal{O}\left({{\rm poly}\log(n)} +{\rm poly}\log\kappa_{\hat{Z}} \right), \epsilon_{\hat{Z}^{-1}} \right)\]-block-encoding of $\hat{Z}^{-1}$ can be implemented with cost
    \[\tilde{\mathcal{O}}_{n, \epsilon}\left(\kappa_{\hat{Z}}\left(\alpha_H\|A_1\|_F^2 \|F_1^{-1/2}\|_{\max}^2\right)/\|\hat{Z}\|_2\right).\]
\end{lemma}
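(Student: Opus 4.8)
The plan is to apply a standard quantum matrix-inversion (variable-time amplitude amplification / QLSA) result to the block-encoding of $\hat{Z}$ obtained in Lemma \ref{LOlemma: BE Z hat}, tracking how the scaling factor of the input block-encoding and the condition number $\kappa_{\hat{Z}}$ enter the cost and the output scaling factor. Concretely, I would invoke the block-encoded matrix-inversion lemma of \cite{gilyen2018quantum} (their Lemma/Corollary on implementing $M^{-1}$ from a block-encoding of $M$, e.g. Lemma 9 / the negative-power construction): given an $(\alpha_{\hat Z}, a_{\hat Z}, \epsilon_{\hat Z})$-block-encoding of $\hat Z$ with $\alpha_{\hat Z} = \alpha_H\|A_1\|_F^2\|F_1^{-1/2}\|_{\max}^2$, one obtains a block-encoding of $\hat Z^{-1}$ whose subnormalization constant is $\Theta(\kappa_{\hat Z}/\alpha_{\hat Z})$ — which, since $\alpha_{\hat Z}/\|\hat Z\|_2 \ge \kappa(\hat Z)$ is in general not what we want, I would instead normalize so that the reported scaling is $2/\|\hat Z\|_2$ by absorbing the extra factor into the ancilla count and the runtime. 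The runtime of the inversion is $\tilde{\mathcal{O}}$ of (the cost of one use of the input block-encoding) $\times\ \kappa_{\hat Z}$, which is where $\alpha_{\hat Z}$ multiplies $\kappa_{\hat Z}$ to give the claimed $\tilde{\mathcal{O}}_{n,\epsilon}\!\left(\kappa_{\hat Z}\,\alpha_H\|A_1\|_F^2\|F_1^{-1/2}\|_{\max}^2/\|\hat Z\|_2\right)$; the division by $\|\hat Z\|_2$ reflects the rescaling of the target block-encoding to the normalization $2/\|\hat Z\|_2$.

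The key steps, in order, are: (i) recall from Lemma \ref{LOlemma: BE Z hat} the explicit $(\alpha_{\hat Z}, \mathcal O(\log n), \epsilon_{\hat Z})$-block-encoding of $\hat Z$; (ii) recall from Lemma \ref{LOlemma: condition number Zhat} that $\kappa_{\hat Z}$ is finite (so $\hat Z^{-1}$ is well-defined and the QLSA applies), and note $\|\hat Z\|_2 = \sigma_1(\hat Z)$; (iii) apply the block-encoded inversion result of \cite{gilyen2018quantum}, which produces a block-encoding of $\hat Z^{-1}/\big(2\alpha_{\hat Z}\big)$ up to error $\epsilon_{\hat Z^{-1}}$ using $\tilde{\mathcal O}\big(\kappa_{\hat Z}\big)$ calls to the block-encoding of $\hat Z$ and $\tilde{\mathcal O}\big(\kappa_{\hat Z}\,\mathrm{poly}\log(1/\epsilon_{\hat Z^{-1}})\big)$ additional gates, with $\mathcal O(\mathrm{poly}\log\kappa_{\hat Z})$ extra ancillas for the variable-time amplitude amplification; (iv) rescale the output block-encoding from subnormalization $2\alpha_{\hat Z}$ to $2/\|\hat Z\|_2$ by multiplying through by $\alpha_{\hat Z}\|\hat Z\|_2$, which is a scalar $\le \alpha_{\hat Z}\sigma_1(\hat Z)$ absorbed into the runtime as the stated factor; (v) collect the gate/ancilla/error bookkeeping into the stated $(2/\|\hat Z\|_2,\ \mathcal O(\mathrm{poly}\log n + \mathrm{poly}\log\kappa_{\hat Z}),\ \epsilon_{\hat Z^{-1}})$-block-encoding with cost $\tilde{\mathcal O}_{n,\epsilon}\big(\kappa_{\hat Z}\alpha_{\hat Z}/\|\hat Z\|_2\big)$.

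The main obstacle I anticipate is the precise bookkeeping of the subnormalization constant: the generic inversion lemma of \cite{gilyen2018quantum} returns a block-encoding of $\hat Z^{-1}$ scaled by roughly $1/(2\kappa_{\hat Z})$ relative to spectral norm $1/\|\hat Z\|_2$ (i.e. subnormalization $\Theta(\alpha_{\hat Z})$ or $\Theta(\kappa_{\hat Z})$ depending on the normalization convention used), and one has to reconcile the paper's stated target subnormalization $2/\|\hat Z\|_2$ with what the lemma natively yields, paying the difference in the runtime prefactor. A secondary subtlety is that the inversion lemma technically applies after one has normalized $\hat Z/\alpha_{\hat Z}$ to have spectral norm $\le 1$ and singular values bounded below by $1/\kappa_{\hat Z}$; verifying the lower singular-value bound uses exactly $\sigma_0(\hat Z) \ge \gamma_1\mu/\omega^2$ from Lemma \ref{LOlemma: condition number Zhat}, so this must be cited. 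Everything else — the $\mathrm{poly}\log$ accounting of ancillas and the $\tilde{\mathcal O}$ suppression of $\log$ factors — is routine and follows the same pattern already used in Lemmas \ref{LOlemma: BE B}--\ref{LOlemma: BE Z hat}.
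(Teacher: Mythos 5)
Your proposal follows essentially the same route as the paper: take the block-encoding of $\hat{Z}$ from Lemma~\ref{LOlemma: BE Z hat}, normalize by $\|\hat{Z}\|_2$, apply the generic block-encoded negative-power/inversion result (the paper invokes Lemma 10 of \cite{chakraborty2018power}, which is the same QSVT machinery you cite from \cite{gilyen2018quantum}), and rescale, with the cost being $\kappa_{\hat{Z}}$ times the input subnormalization over $\|\hat{Z}\|_2$. The only differences — which reference's inversion lemma is named, and whether the $\|\hat{Z}\|_2$-rescaling is done before or after inversion — are immaterial to the bookkeeping.
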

\begin{proof}
    By applying Lemma 52 of \cite{gilyen2018quantum} to $\hat{Z}$, we can efficiently implement an \[\left(\left(\alpha_H\|A_1\|_F^2 \|F_1^{-1/2}\|_{\max}^2\right)/\|\hat{Z}\|_2, \mathcal{O}(\log(n)), \left(\alpha_H\|A_1\|_F^2 \|F_1^{-1/2}\|_{\max}^2\right)/\|\hat{Z}\|_2\epsilon_{\hat{Z}} \right)\]-block-encoding of $\hat{Z}/\|\hat{Z}\|_2$. By applying Lemma 10 of \cite{chakraborty2018power} to $\hat{Z}/\|\hat{Z}\|_2$, a
    \[\left(2, \mathcal{O}\left({{\rm poly}\log(n)} +{\rm poly}\log\kappa_{\hat{Z}} \right), \epsilon_{\|\hat{Z}\|_2\hat{Z}^{-1}}  \right)\]
    -block-encoding of $\|\hat{Z}\|_2\hat{Z}^{-1}$ can be implemented with cost
    \[\tilde{O}_{n, \epsilon}\left(\kappa_{\hat{Z}}\left(\alpha_H\|A_1\|_F^2 \|F_1^{-1/2}\|_{\max}^2\right)/\|\hat{Z}\|_2\right).\]
    Then the result follows from Lemma 52 of \cite{gilyen2018quantum}.
\end{proof}
Then we discuss the block-encoding of  $B\left(\hat{Z}^{-1} \right)B^T$.
\begin{lemma}\label{LOlemma: BE BZinvB}
    A 
    \[\left(2\left(\|A_1\|_F^2 \|F_1^{-1/2}\|_{\max}^2\right)/\|\hat{Z}\|_2, \mathcal{O}\left({{\rm poly}\log(n)} +{\rm poly}\log\kappa_{\hat{Z}} \right), \epsilon_{BZB} \right) \]
    -block-encoding of $B\left(\hat{Z}^{-1} \right)B^T$ can be implemented efficiently.
\end{lemma}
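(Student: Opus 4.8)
The plan is to build the block-encoding of $B\hat{Z}^{-1}B^T$ as a product of three block-encodings already at our disposal. First I would invoke Lemma~\ref{LOlemma: BE B} to get an $\left(\|A_1\|_F\|F_1^{-1/2}\|_{\max}, \mathcal{O}(\log n), \epsilon_B\right)$-block-encoding of $B$, and note that the same unitary, read with its input and output registers interchanged, is a block-encoding of $B^T$ with identical parameters (the entries are real, so transpose and conjugate transpose coincide), so no separate construction via Lemma~\ref{LOlemma: BE: A} is needed. Next I would invoke Lemma~\ref{LOlemma: BE Z inv} to obtain a $\left(2/\|\hat{Z}\|_2, \mathcal{O}(\mathrm{poly}\log n + \mathrm{poly}\log\kappa_{\hat{Z}}), \epsilon_{\hat{Z}^{-1}}\right)$-block-encoding of $\hat{Z}^{-1}$, whose construction cost is $\tilde{\mathcal{O}}_{n,\epsilon}\!\left(\kappa_{\hat{Z}}\left(\alpha_H\|A_1\|_F^2\|F_1^{-1/2}\|_{\max}^2\right)/\|\hat{Z}\|_2\right)$.

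The second step is to compose these three factors via the product rule for block-encodings (Lemma~53 of \cite{gilyen2018quantum}), applied twice: first to $\hat{Z}^{-1}$ and $B^T$, then to $B$ and the result. The subnormalization constant of a product of block-encodings is the product of the individual subnormalizations, which gives $\left(\|A_1\|_F\|F_1^{-1/2}\|_{\max}\right)\cdot\left(2/\|\hat{Z}\|_2\right)\cdot\left(\|A_1\|_F\|F_1^{-1/2}\|_{\max}\right) = 2\left(\|A_1\|_F^2\|F_1^{-1/2}\|_{\max}^2\right)/\|\hat{Z}\|_2$, exactly the constant claimed. The ancilla registers add, so the total number of ancillas is $\mathcal{O}(\log n) + \mathcal{O}(\mathrm{poly}\log n + \mathrm{poly}\log\kappa_{\hat{Z}}) + \mathcal{O}(\log n) = \mathcal{O}(\mathrm{poly}\log n + \mathrm{poly}\log\kappa_{\hat{Z}})$. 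The errors propagate additively, each scaled by the subnormalizations of the remaining factors; collecting these contributions I would simply rename the aggregate as $\epsilon_{BZB}$, consistent with the convention of this section of carrying but not evaluating block-encoding accuracies.

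For the cost, the block-encodings of $B$ and $B^T$ from Lemma~\ref{LOlemma: BE B} are implemented efficiently (polylogarithmic overhead on top of QRAM access), and the two applications of the product rule add only a constant number of controlled uses of the constituent unitaries. Hence the only non-polylogarithmic cost is inherited from the $\hat{Z}^{-1}$ block-encoding of Lemma~\ref{LOlemma: BE Z inv}, and the composite construction is ``efficient'' in the sense used throughout this section, the dominant cost being that already accounted for in Lemma~\ref{LOlemma: BE Z inv}.

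The main obstacle I anticipate is purely bookkeeping rather than analytic: tracking the subnormalization factor tightly through the two applications of the product rule so that one lands on $2\left(\|A_1\|_F^2\|F_1^{-1/2}\|_{\max}^2\right)/\|\hat{Z}\|_2$ rather than a looser bound, keeping the ancilla count and the accumulated error in the stated forms, and confirming that the $B^T$ block-encoding genuinely comes for free from the $B$ block-encoding. There is no substantive difficulty beyond the correct iterated use of Lemmas 50, 52, and 53 of \cite{gilyen2018quantum} together with Lemma 10 of \cite{chakraborty2018power}.
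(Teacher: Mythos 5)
Your proposal is correct and follows exactly the paper's route: the paper's proof is a one-line citation of Lemma~\ref{LOlemma: BE B}, Lemma~\ref{LOlemma: BE Z inv}, and the product rule (Lemma~53 of \cite{gilyen2018quantum}), and your multiplication of subnormalizations $\left(\|A_1\|_F\|F_1^{-1/2}\|_{\max}\right)\cdot\left(2/\|\hat{Z}\|_2\right)\cdot\left(\|A_1\|_F\|F_1^{-1/2}\|_{\max}\right)$ reproduces the claimed constant. You simply make explicit the bookkeeping (transpose for free, additive ancillas, error propagation) that the paper leaves implicit.
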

\begin{proof}
    The result follows from Lemma \ref{LOlemma: BE B} and Lemma \ref{LOlemma: BE Z inv} and Lemma 53 of \cite{gilyen2018quantum}.
\end{proof}
Then we discuss the block-encoding $H + B\hat{Z}^{-1}B^T$.
\begin{lemma}\label{LOlemma: BE HBZB}
    A
    \begin{equation*}
        \begin{pmatrix}
            2\left(\|A_1\|_F^2 \|F_1^{-1/2}\|_{\max}^2\right)/\|\hat{Z}\|_2 + \|A\|_F^2\max\{\|F_1^{-1}\|_{\max} ,
            \|D_2^{-1}\|_{\max} \}, \\
            \mathcal{O}\left({\rm poly}\log(n)+{\rm poly}\log\kappa_{\hat{Z}} \right), \\
            \epsilon_{HBZB}
        \end{pmatrix}
    \end{equation*}
    -block-encoding of $H+ B\hat{Z}^{-1}B^T$ can be implemented efficiently.
\end{lemma}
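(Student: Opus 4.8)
The plan is to invoke the linear-combination-of-block-encodings primitive (Lemma 52 of \cite{gilyen2018quantum}) applied to the two ingredients already in hand: the $\left(\|A\|_F^2\max\{\|F_1^{-1}\|_{\max},\|D_2^{-1}\|_{\max}\},\mathcal{O}(\log n),\epsilon_H\right)$-block-encoding of $H$ from Lemma \ref{LOlemma: BE H}, and the $\left(2(\|A_1\|_F^2\|F_1^{-1/2}\|_{\max}^2)/\|\hat{Z}\|_2,\mathcal{O}({\rm poly}\log(n)+{\rm poly}\log\kappa_{\hat{Z}}),\epsilon_{BZB}\right)$-block-encoding of $B\hat{Z}^{-1}B^T$ from Lemma \ref{LOlemma: BE BZinvB}. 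Since both summands enter $H+B\hat{Z}^{-1}B^T$ with coefficient $1$, the state-preparation pair encoding the coefficient vector $(1,1)$ is a single fixed one-qubit gate and contributes no $n$- or $\kappa_{\hat{Z}}$-dependence.

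First I would pad the two block-encodings up to a common number of ancilla qubits, namely the maximum of the two, which is still $\mathcal{O}({\rm poly}\log(n)+{\rm poly}\log\kappa_{\hat{Z}})$, so that the hypotheses of Lemma 52 of \cite{gilyen2018quantum} are met verbatim. Applying that lemma then produces a block-encoding of $H+B\hat{Z}^{-1}B^T$ whose subnormalization constant is the \emph{sum} of the two individual subnormalizations, i.e.\ $2(\|A_1\|_F^2\|F_1^{-1/2}\|_{\max}^2)/\|\hat{Z}\|_2+\|A\|_F^2\max\{\|F_1^{-1}\|_{\max},\|D_2^{-1}\|_{\max}\}$, exactly as claimed; the number of ancilla qubits picks up only an additive $\mathcal{O}(1)$ and remains $\mathcal{O}({\rm poly}\log(n)+{\rm poly}\log\kappa_{\hat{Z}})$; and the accuracy $\epsilon_{HBZB}$ is the corresponding subnormalization-weighted combination of $\epsilon_H$ and $\epsilon_{BZB}$ (together with the errors already absorbed inside Lemmas \ref{LOlemma: BE H} and \ref{LOlemma: BE BZinvB}), which, consistent with our convention of not tracking block-encoding accuracies, we carry along symbolically without evaluating.

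For the cost claim: the $H$ block-encoding is efficient by Lemma \ref{LOlemma: BE H}, the $B\hat{Z}^{-1}B^T$ block-encoding is efficient in the sense of Lemma \ref{LOlemma: BE BZinvB} (which has already absorbed the single inversion of $\hat{Z}$ via Lemma \ref{LOlemma: BE Z inv}), and the combination step adds only a constant number of calls to each block-encoding plus one fixed one-qubit gate, so the overall construction remains efficient. There is no real obstacle here; the only point requiring care — and it is pure bookkeeping — is to note that the subnormalization constants are \emph{added} rather than multiplied, precisely because this is a sum and not a product of block-encoded matrices, and to align the ancilla registers before invoking the combination lemma. Everything else is a routine instantiation of the pattern already used in Lemmas \ref{LOlemma: BE B}--\ref{LOlemma: BE BZinvB}.
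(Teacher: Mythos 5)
Your proposal is correct and follows exactly the paper's route: the paper's proof is the one-line observation that the result follows from Lemma~\ref{LOlemma: BE H}, Lemma~\ref{LOlemma: BE BZinvB}, and the linear-combination-of-block-encodings lemma (Lemma 52 of \cite{gilyen2018quantum}), which is precisely what you instantiate. Your additional bookkeeping — that the subnormalizations add, the ancilla registers are padded to the maximum plus $\mathcal{O}(1)$, and the error is carried symbolically per the paper's convention — is accurate and simply makes explicit what the paper leaves implicit.
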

\begin{proof}
    The result follows from Lemma \ref{LOlemma: BE H}, Lemma \ref{LOlemma: BE BZinvB}, and Lemma 52 of \cite{gilyen2018quantum}.
\end{proof}
\begin{lemma}\label{LOlemma: BE BZBH}
    A 
    \begin{equation*}
        \begin{aligned}
            \begin{pmatrix}
                &2\left(\|A_1\|_F^2 \|F_1^{-1/2}\|_{\max}^2\right)\|A\|_F^2\max\{\|F_1^{-1}\|_{\max} ,
            \|D_2^{-1}\|_{\max} \} /\|\hat{Z}\|_2,\\
            &\mathcal{O}\left(\{{\rm poly}\log(n)+{\rm poly}\log\kappa_{\hat{Z}} \right), \\
            &\epsilon_{BZBH}
            \end{pmatrix}
        \end{aligned}
    \end{equation*}
    -block-encoding of $B\hat{Z}^{-1}B^TH$ can be implemented efficiently.
\end{lemma}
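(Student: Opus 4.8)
The plan is to obtain the block-encoding of $B\hat{Z}^{-1}B^TH$ as a \emph{product} of two block-encodings that have already been constructed in the preceding lemmas: one for the factor $B\hat{Z}^{-1}B^T$ and one for the factor $H$. Concretely, Lemma \ref{LOlemma: BE BZinvB} supplies a block-encoding of $B\hat{Z}^{-1}B^T$ with subnormalization $2\left(\|A_1\|_F^2\|F_1^{-1/2}\|_{\max}^2\right)/\|\hat{Z}\|_2$ and $\mathcal{O}\left({\rm poly}\log(n)+{\rm poly}\log\kappa_{\hat{Z}}\right)$ ancilla qubits, while Lemma \ref{LOlemma: BE H} supplies a block-encoding of $H$ with subnormalization $\|A\|_F^2\max\{\|F_1^{-1}\|_{\max},\|D_2^{-1}\|_{\max}\}$ and $\mathcal{O}(\log n)$ ancilla qubits. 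Applying the product rule for block-encodings (Lemma 53 of \cite{gilyen2018quantum}), the subnormalization factors multiply, the ancilla counts add, and the block-encoding errors combine (weighted by the respective subnormalizations) into a single accumulated error.

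The first step is to check that the product of the two subnormalization constants matches the one claimed in the statement, namely $2\left(\|A_1\|_F^2\|F_1^{-1/2}\|_{\max}^2\right)\|A\|_F^2\max\{\|F_1^{-1}\|_{\max},\|D_2^{-1}\|_{\max}\}/\|\hat{Z}\|_2$; this is immediate by inspection. The second step is to verify that adding the ancilla counts $\mathcal{O}\left({\rm poly}\log(n)+{\rm poly}\log\kappa_{\hat{Z}}\right)$ and $\mathcal{O}(\log n)$ still yields $\mathcal{O}\left({\rm poly}\log(n)+{\rm poly}\log\kappa_{\hat{Z}}\right)$, since the $\log n$ term is absorbed. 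The third step is to collect the propagated error into the single parameter $\epsilon_{BZBH}$; consistent with the paper's convention of tracking but not analyzing the accuracy parameters of block-encodings, we simply name the resulting error $\epsilon_{BZBH}$ without spelling out its dependence on $\epsilon_{BZB}$ and $\epsilon_H$. Because the construction reuses only objects whose block-encoding cost is polylogarithmic (all the heavy cost was already paid in Lemma \ref{LOlemma: BE Z inv}), the composition itself remains \emph{efficient} in the sense used throughout this section.

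The one point requiring care — and it is minor — is the \emph{ordering} of the factors: since the target is $B\hat{Z}^{-1}B^TH$ and not $HB\hat{Z}^{-1}B^T$, one must feed the block-encoding of $B\hat{Z}^{-1}B^T$ as the left argument and that of $H$ as the right argument to Lemma 53 of \cite{gilyen2018quantum}, because the product of block-encodings is not symmetric in its inputs. I do not anticipate any real obstacle; this lemma is purely a composition step assembling one of the ingredients needed later to block-encode the coefficient matrix $(P_c^{-1}K)_{11}$ of \eqref{LOsys: RP-RAS}.
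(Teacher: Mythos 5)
Your proposal matches the paper's proof exactly: the paper also obtains the block-encoding of $B\hat{Z}^{-1}B^TH$ by composing the block-encodings from Lemma~\ref{LOlemma: BE BZinvB} and Lemma~\ref{LOlemma: BE H} via the product rule (Lemma 53 of \cite{gilyen2018quantum}), with subnormalizations multiplying and ancillas adding. Your additional remarks on factor ordering and error bookkeeping are consistent with the paper's conventions and require no changes.
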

\begin{proof}
    The result follows from Lemma \ref{LOlemma: BE H}, Lemma \ref{LOlemma: BE BZinvB}, and Lemma 53 of \cite{gilyen2018quantum}.
\end{proof}
Finally, we have the block-encoding of \eqref{LOsys: RP-RAS}.
\begin{lemma}\label{LOlemma: BE for RPRAS}
Let
\begin{equation*}
    \begin{aligned}
        C_{\rm RPRAS} = \frac{\left(2\hat{h}\frac{\left(\|A_1\|_F^2 \|F_1^{-1/2}\|_{\max}^2\right)}{\left\|\hat{Z} \right\|_2}+1\right)\left(\|A\|_F^2\max\{\|F_1^{-1}\|_{\max} , \|D_2^{-1}\|_{\max} \}+1\right)-1}{\hat{h}^2 \left\| \left( P_c^{-1}K\right)_{11} \right\|_2}.
    \end{aligned}
\end{equation*}
Then a \[\left(C_{\rm RPRAS},
\mathcal{O}\left({{\rm poly}\log(n)} +{\rm poly}\log\kappa_{\hat{Z}} \right),
\epsilon_{\rm RPRAS} \right) \]-block-encoding of $\Xi$ can be implemented {with} $
    \tilde{O}_{n, \epsilon}\left(\kappa_{\hat{Z}}\left(\alpha_H\|A_1\|_F^2 \|F_1^{-1/2}\|_{\max}^2\right)/\|\hat{Z}\|_2\right)$ {queries to QRAM}.
% where \[C_{\rm RPRAS} = \frac{2\hat{h}\left(\|A_1\|_F^2 \|F_1^{-1/2}\|_{\max}^2\right)(\|A\|_F^2\max\{\|F_1^{-1}\|_{\max} , \|D_2^{-1}\|_{\max} \}+1)/\|\hat{Z}\|_2 + \|A\|_F^2\max\{\|F_1^{-1}\|_{\max} , \|D_2^{-1}\|_{\max} \}}{\hat{h}^2\left\| \left( P_c^{-1}K\right)_{11} \right\|_2}.\]
\end{lemma}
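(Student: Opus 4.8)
The plan is to assemble the claimed block-encoding of $\Xi$ from the block-encodings of its constituent matrices established in the preceding part of this subsection, using only the standard arithmetic of block-encodings — linear combinations (Lemma 52 of \cite{gilyen2018quantum}) and products (Lemma 53 of \cite{gilyen2018quantum}) — together with a final rescaling. The first step is to rewrite the coefficient matrix in a form adapted to these operations. Since $\hat H = \hat h I$, expanding $(P_c^{-1}K)_{11} = \hat H^{-1}H + \hat H^{-1}B\hat Z^{-1}B^T - \hat H^{-1}B\hat Z^{-1}B^T\hat H^{-1}H$ gives
$$\hat h^2(P_c^{-1}K)_{11} = \hat h H + \hat h B\hat Z^{-1}B^T - B\hat Z^{-1}B^T H ,$$
a short linear combination of (products of) the matrices $H$, $B\hat Z^{-1}B^T$ and $B\hat Z^{-1}B^T H$ and the identity, each of which already carries an efficiently implementable block-encoding; equivalently $(P_c^{-1}K)_{11} = I - (I - \hat H^{-1}B\hat Z^{-1}B^T)(I - \hat H^{-1}H)$, verified by expanding, which exhibits the single matrix product responsible for the multiplicative structure of $C_{\rm RPRAS}$.

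First I would invoke Lemma \ref{LOlemma: BE BZinvB} and Lemma \ref{LOlemma: BE H} to obtain a block-encoding of $B\hat Z^{-1}B^T$, with subnormalisation $2\|A_1\|_F^2\|F_1^{-1/2}\|_{\max}^2/\|\hat Z\|_2$, and a block-encoding of $H$, with subnormalisation $\|A\|_F^2\max\{\|F_1^{-1}\|_{\max},\|D_2^{-1}\|_{\max}\}$. Using the linear-combination rule of \cite{gilyen2018quantum} (with the scalar $\hat h$ and a trivial block-encoding of the identity), I would form block-encodings of the two ``shifted'' matrices entering the factorisation above, whose subnormalisations are exactly the two factors $2\hat h\|A_1\|_F^2\|F_1^{-1/2}\|_{\max}^2/\|\hat Z\|_2 + 1$ and $\|A\|_F^2\max\{\|F_1^{-1}\|_{\max},\|D_2^{-1}\|_{\max}\} + 1$ appearing in $C_{\rm RPRAS}$; multiplying them via the product rule multiplies the subnormalisations, and one further linear combination against the identity produces a block-encoding of $\hat h^2(P_c^{-1}K)_{11}$ (up to the fixed scalar $\hat h^2$) whose subnormalisation is the numerator of $C_{\rm RPRAS}$. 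Throughout, the number of ancilla qubits stays at $\mathcal{O}({\rm poly}\log(n)+{\rm poly}\log\kappa_{\hat Z})$: the block-encodings of $A$, $A_1$, $B$, $H$ and of the relevant diagonal matrices use $\mathcal{O}(\log n)$ ancillas, the sole source of the $\kappa_{\hat Z}$ dependence is the inversion of $\hat Z$ in Lemma \ref{LOlemma: BE Z inv}, and each arithmetic step of \cite{gilyen2018quantum} adds at most $\mathcal{O}(\log n)$ further ancillas.

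To conclude, I would divide the subnormalisation and the error by the fixed positive scalar $\hat h^2\|(P_c^{-1}K)_{11}\|_2$; this is admissible because every $(\alpha, a, \epsilon)$-block-encoding of a matrix $M$ is simultaneously an $(\alpha/c, a, \epsilon/c)$-block-encoding of $M/c$ for each $c > 0$, and the resulting subnormalisation $C_{\rm RPRAS}$ still dominates $\|\Xi\|_2 = 1$ since the subnormalisation before rescaling dominates $\|\hat h^2(P_c^{-1}K)_{11}\|_2$. This yields the stated $(C_{\rm RPRAS}, \mathcal{O}({\rm poly}\log(n)+{\rm poly}\log\kappa_{\hat Z}), \epsilon_{\rm RPRAS})$-block-encoding of $\Xi$. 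For the query complexity, every block-encoding used above except that of $\hat Z^{-1}$ is implemented efficiently, i.e.\ at polylogarithmic cost, by the earlier lemmas of this subsection, and the linear-combination and product rules only compose a constant number of the underlying unitaries; hence the total number of queries to QRAM is, up to polylogarithmic factors, that of the $\hat Z^{-1}$ block-encoding in Lemma \ref{LOlemma: BE Z inv}, namely $\tilde{\mathcal{O}}_{n,\epsilon}\bigl(\kappa_{\hat Z}(\alpha_H\|A_1\|_F^2\|F_1^{-1/2}\|_{\max}^2)/\|\hat Z\|_2\bigr)$, with $\alpha_H = \mathcal{O}(1/\hat h)$ the subnormalisation of $\hat Z$ from Lemma \ref{LOlemma: BE Z hat}. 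The accuracy $\epsilon_{\rm RPRAS}$ is obtained by propagating $\epsilon_{A_1}, \epsilon_B, \epsilon_H, \epsilon_{\hat Z}, \epsilon_{\hat Z^{-1}}, \dots$ through the same composition rules; consistent with the convention of this section, we record it symbolically without estimating its value.

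The step I expect to be the main obstacle is the exact bookkeeping of the subnormalisation constant: one must choose the order and grouping of the linear-combination and product operations so that the subnormalisations combine into precisely the product-and-shift form $(\,\cdot\,+1)(\,\cdot\,+1)-1$ of $C_{\rm RPRAS}$ — in particular keeping the scalar $\hat h$ attached to the $B\hat Z^{-1}B^T$ block, whose subnormalisation blows up as $\mu \to 0$, rather than to $H$ — instead of the looser product a more naive grouping (block-encoding $H + B\hat Z^{-1}B^T$ and $B\hat Z^{-1}B^T H$ separately, as in Lemmas \ref{LOlemma: BE HBZB} and \ref{LOlemma: BE BZBH}) would yield. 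The remaining ingredients — the ancilla count, the collapse of the query cost onto that of the $\hat Z^{-1}$ block-encoding, and the admissibility of the final rescaling — are routine given the preceding lemmas.
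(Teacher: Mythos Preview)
Your proposal is correct and uses the same ingredients as the paper—block-encodings of $H$ and $B\hat Z^{-1}B^T$ assembled via the linear-combination and product rules of \cite{gilyen2018quantum}—but you organise them differently. The paper's own proof is a single sentence: it invokes Lemma~\ref{LOlemma: BE HBZB} (the block-encoding of $H+B\hat Z^{-1}B^T$), Lemma~\ref{LOlemma: BE BZBH} (the block-encoding of $B\hat Z^{-1}B^TH$), and Lemma~52 of \cite{gilyen2018quantum} to form the linear combination $\frac{1}{\hat h}(H+B\hat Z^{-1}B^T)-\frac{1}{\hat h^2}B\hat Z^{-1}B^TH$, then rescales by $\|(P_c^{-1}K)_{11}\|_2$. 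This is exactly what you flag in your last paragraph as the ``naive grouping.''

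Your multiplicative factorisation $(P_c^{-1}K)_{11}=I-(I-\hat H^{-1}B\hat Z^{-1}B^T)(I-\hat H^{-1}H)$ is a genuinely different (and cleaner) decomposition: it explains \emph{why} the numerator of $C_{\rm RPRAS}$ has the product-and-shift shape $(\,\cdot+1)(\,\cdot+1)-1$, whereas the paper's additive route obscures this. The price is one extra product step; the benefit is that the subnormalisation bookkeeping lines up with the stated constant more transparently than the straight linear combination of Lemmas~\ref{LOlemma: BE HBZB} and~\ref{LOlemma: BE BZBH} does. In either approach the ancilla count and the query cost collapse onto that of Lemma~\ref{LOlemma: BE Z inv}, exactly as you argue, so the two routes are equivalent at the level of the complexity bounds that are actually used downstream (Lemma~\ref{LOlemma: bound C} and Theorem~\ref{LOtheorem: main}).
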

\begin{proof}
The result follows from Lemma \ref{LOlemma: BE HBZB}, Lemma \ref{LOlemma: BE BZBH} and Lemma 52 of \cite{gilyen2018quantum}.
\end{proof}
We have the following lemma for the value of $C_{\rm RPRAS}$.
\begin{lemma}\label{LOlemma: bound C}
    $C_{RPRAS} = \mathcal{O}\left(\frac{\|A\|_F^4}{\hat{h} \left\|\hat{Z}\right\|_2  \left\| Y \right\|_2  }  \right).$
\end{lemma}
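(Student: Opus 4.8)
The plan is to prove the estimate by unwinding the product in the numerator of $C_{\rm RPRAS}$ and bounding each resulting term with the spectral estimates already in hand. Write the two bracketed factors as $p := 2\hat h\,\|A_1\|_F^2\,\|F_1^{-1/2}\|_{\max}^2/\|\hat Z\|_2$ and $q := \|A\|_F^2\max\{\|F_1^{-1}\|_{\max},\|D_2^{-1}\|_{\max}\}$, so that the numerator is $(p+1)(q+1)-1 = pq+p+q$. Since $Y=(P_c^{-1}K)_{11}$ by construction, this gives $C_{\rm RPRAS} = (pq+p+q)/(\hat h^2\|Y\|_2)$, and it therefore suffices to establish the numerator bound $pq+p+q = \mathcal{O}\!\big(\hat h\,\|A\|_F^4/\|\hat Z\|_2\big)$; dividing through by $\hat h^2\|Y\|_2$ then yields exactly the claimed right-hand side, with $\|Y\|_2$ untouched.

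To control the individual pieces I would appeal to Lemma~\ref{LOlemma: bound: Psi F D}, which gives $1\le F_{1,[i]}\le 2$ and hence $\|F_1^{-1/2}\|_{\max}\le 1$, $\|F_1^{-1}\|_{\max}\le 1$, $\|D_1\|_{\max}\le 1$, together with the corresponding bound on the entries of $D_2^{-1}$. Combining these with the two elementary facts that $A_1$ is a column submatrix of $A$ (so $\|A_1\|_F\le\|A\|_F$) and that $\|A\|_F\ge 1$ by Assumption~\ref{LOassumption: integer data} and ${\rm rank}(A)=m$, one reduces $p\le 2\hat h\,\|A\|_F^2/\|\hat Z\|_2$ and $q=\mathcal{O}(\|A\|_F^2)$. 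I would also record the complementary upper bound $\|\hat Z\|_2\le\hat h^{-1}\|B\|_2^2+\|D_1\|_2\le\hat h^{-1}\|A\|_F^2+1$, which comes straight from $\hat Z=\hat h^{-1}B^TB+D_1$ and the same lemma.

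The last step is to assemble these estimates: the product term obeys $pq\le (2\hat h\,\|A\|_F^2/\|\hat Z\|_2)\cdot\mathcal{O}(\|A\|_F^2)=\mathcal{O}(\hat h\,\|A\|_F^4/\|\hat Z\|_2)$; the term $p$ is already $\le 2\hat h\,\|A\|_F^2/\|\hat Z\|_2\le 2\hat h\,\|A\|_F^4/\|\hat Z\|_2$ since $\|A\|_F\ge1$; and the standalone term $q=\mathcal{O}(\|A\|_F^2)$ is absorbed into $\mathcal{O}(\hat h\,\|A\|_F^4/\|\hat Z\|_2)$ by means of the recorded upper bound on $\|\hat Z\|_2$. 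Summing the three estimates gives the numerator bound, and the lemma follows.

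I expect the main obstacle to be precisely the bookkeeping in this final assembly: one has to track carefully which of $pq$, $p$, $q$ dominates, and the powers of $\hat h$ and of $\|\hat Z\|_2$ propagated through the chain of block-encoding subnormalizations in Lemmas~\ref{LOlemma: BE H}--\ref{LOlemma: BE for RPRAS}, so that the auxiliary ``$+1$'' summands and the factor $\max\{\|F_1^{-1}\|_{\max},\|D_2^{-1}\|_{\max}\}$ --- whose worst-case size is governed by Lemma~\ref{LOlemma: bound: Psi F D} and hence by how well the partition of Definition~\ref{LOdef: rule} approximates the optimal partition --- really are swallowed by the $\|A\|_F^4$ in the numerator together with the $\hat h$ and $\|\hat Z\|_2$ factors, rather than leaving behind a stray dimension- or $\hat h$-dependent factor that would spoil the stated $\mathcal{O}$.
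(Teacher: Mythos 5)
Your proposal follows essentially the same route as the paper: the paper's entire proof is the observation from Lemma~\ref{LOlemma: bound: Psi F D} that $\|F_1^{-1/2}\|_{\max}^2=\mathcal{O}(1)$ and $\max\{\|F_1^{-1}\|_{\max},\|D_2^{-1}\|_{\max}\}=\mathcal{O}(1)$, followed immediately by the asserted bound; your expansion of the numerator as $pq+p+q$ is just a more explicit rendering of that computation, with the product term $pq$ giving exactly the claimed $\mathcal{O}\!\left(\hat h\,\|A\|_F^4/\|\hat Z\|_2\right)$. The one place where you go beyond the paper --- justifying that the standalone term $q$ is ``absorbed'' --- is also the one place your argument does not close: absorbing $q=\mathcal{O}(\|A\|_F^2)$ into $\mathcal{O}\!\left(\hat h\,\|A\|_F^4/\|\hat Z\|_2\right)$ requires $\|\hat Z\|_2=\mathcal{O}(\hat h\,\|A\|_F^2)$, whereas your recorded bound $\|\hat Z\|_2\le \hat h^{-1}\|A\|_F^2+1$ points in the opposite direction and only suffices when $\hat h=\Omega(1)$, which is not guaranteed since $\hat h=\gamma_1\sigma_0^2(A)/\omega^2$. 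The paper's proof silently treats $pq$ as the dominant term and does not address this either, so your proposal is faithful to, and no weaker than, the published argument; it simply makes visible a step the paper leaves implicit.
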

\begin{proof}
From Lemma \ref{LOlemma: bound: Psi F D}, we have
\begin{equation*}
    \begin{aligned}
        \left\|F_1^{-1/2}\right\|_{\max}^2 = \mathcal{O}(1)
    \end{aligned}
\end{equation*}
and
\begin{equation*}
    \begin{aligned}
        \max\{\|F_1^{-1}\|_{\max} , \|D_2^{-1}\|_{\max} \} = \mathcal{O}(1). 
    \end{aligned}
\end{equation*}
Then
\begin{equation*}
    \begin{aligned}
        C_{RPRAS} &= \mathcal{O}\left(\frac{\|A\|_F^4}{\hat{h} \left\|\hat{Z}\right\|_2  \left\| \left( P_c^{-1}K\right)_{11} \right\|_2  }  \right) = \mathcal{O}\left(\frac{\|A\|_F^4}{\hat{h} \left\|\hat{Z}\right\|_2  \left\| Y \right\|_2  }  \right).
    \end{aligned}
\end{equation*}
The proof is complete.
\end{proof}

%%%%%%%%%%%%%%%%%%%%%%%%%%%%%%%%%%%%%%%%%%%%%%%%%%%%%
\subsection{Worst-case Complexity}
%%%%%%%%%%%%%%%%%%%%%%%%%%%%%%%%%%%%%%%%%%%%%%%%%%%%%

In this section, we analyze the complexity of Algorithm \ref{LOalg: II-QIPM pre}. 
In each iteration of Algorithm \ref{LOalg: II-QIPM pre}, we use Definition \ref{LOdef: rule} to partition the variables. We use the QLSA of \cite{chakraborty2018power} to build and solve \eqref{LOsys: RP-RAS}, use the QTA from \cite{van2023quantum} to readout the solution, and then use line search to update the iterates. The complexity is dominated by the complexity of QLSA and QTA.
\begin{theorem}\label{LOtheorem: main}
In the $k^{\rm th}$ iteration of Algorithm \ref{LOalg: II-QIPM pre}, the QLSA by \cite{chakraborty2018power} and the QTA by \cite{van2023quantum} can build and solve \eqref{LOsys: RP-RAS} with a solution satisfying $\|\hat{r}^k\|_2\leq \eta{\sqrt{\mu^k/{n}}}$ with 
\[\tilde{\mathcal{O}}_{n,\bar{\omega}, \frac{1}{\epsilon}}\left( \frac{ n^{1.5}\bar{\omega}^{13}}{\epsilon^6}\kappa_A^5 \|A\|_F^6 \left( \left\|\hat{A}  \right\|_2 + \left\|\hat{b}  \right\|_2 \right)\right)\]
queries to QRAM.
The total complexity for Algorithm \ref{LOalg: II-QIPM pre} to generate an $\epsilon$-approximate solution is
\[\tilde{\mathcal{O}}_{n, \bar{\omega}, \frac{1}{\epsilon}}\left(n^2\cdot\left( \frac{ n^{1.5}\bar{\omega}^{13}}{\epsilon^6}\kappa_A^5 \|A\|_F^6 \left( \left\|\hat{A}  \right\|_2 + \left\|\hat{b}  \right\|_2 \right)\right)\right)\]
queries to QRAM and $\tilde{\mathcal{O}}_{n,\bar{\omega},\frac{1}{\epsilon}}(n^3m)$ classical arithmetic operations.
\end{theorem}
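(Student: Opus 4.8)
The plan is to assemble the per-iteration query complexity by tracking the accuracy requirement from~\eqref{LOeq: convergence condition pre} through the block-encoding cost in Lemma~\ref{LOlemma: BE for RPRAS} and the QLSA/QTA cost, and then multiply by the $\tilde{\mathcal{O}}(n)$ iteration bound inherited from the II-QIPM framework of~\cite{mohammadisiahroudi2022efficient}. First I would invoke Lemma~\ref{LOlemma: convergence condition} (which still applies because, as noted after Step~4, the inexact Newton direction we actually compute is designed to meet the same residual condition $\|\hat{r}^k\|_2\le\eta\sqrt{\mu^k/n}$ on the relevant residual) to reduce the goal to solving~\eqref{LOsys: RP-RAS} to accuracy $\epsilon_q = \frac{\eta}{\kappa_{Y}\,\|\xi\|_2}\sqrt{\mu/n}$ — here I must re-derive the analogue of~\eqref{LOeq: convergence condition pre} with $M_{\rm MNES}M_B^{-1}$ replaced by the normalized preconditioned matrix $\Xi$, so $\kappa_{M_{\rm MNES}M_B^{-1}}$ becomes $\kappa_Y$ and $\|v_{\rm MNES}\|_2$ becomes $\|\xi\|_2$, after which I bound $\kappa_Y = \mathcal{O}(\mu^{-1}(1 + n\sigma_1^2(A)/(\gamma_1\hat h)))$ by Lemma~\ref{LOlemma: condition number Y} and bound $\|\xi\|_2$ crudely in terms of $\|A\|_F$, $\bar\omega$, $\mu$, $\hat b$, $\hat A$ and the normalization $\|(P_c^{-1}K)_{11}\|_2 = \|Y\|_2$.

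Second, I would count the QLSA cost. The standard QLSA-with-block-encoding cost is $\tilde{\mathcal{O}}(\kappa_\Xi \cdot C_{\rm RPRAS} \cdot \log(1/\epsilon_{\rm state}))$ queries to the block-encoding oracle of $\Xi$, where $\kappa_\Xi = \kappa_Y$ (normalization does not change the condition number) and $C_{\rm RPRAS}$ is the subnormalization factor from Lemma~\ref{LOlemma: BE for RPRAS}, itself bounded in Lemma~\ref{LOlemma: bound C} by $\mathcal{O}(\|A\|_F^4/(\hat h\,\|\hat Z\|_2\,\|Y\|_2))$; each such oracle call in turn costs $\tilde{\mathcal{O}}(\kappa_{\hat Z}\,\alpha_H\|A_1\|_F^2\|F_1^{-1/2}\|_{\max}^2/\|\hat Z\|_2)$ QRAM queries by the last statement of Lemma~\ref{LOlemma: BE for RPRAS}. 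The QTA cost from~\cite{van2023quantum} multiplies in a factor $\tilde{\mathcal{O}}(1/\epsilon_{\rm QTA})$ (or the dimension-scaled variant, depending on which tomography routine is used), and we set $\epsilon_{\rm QTA}=\epsilon_q$. So the per-iteration query complexity is the product
\begin{equation*}
    \tilde{\mathcal{O}}\!\left(\kappa_Y \cdot C_{\rm RPRAS} \cdot \frac{\kappa_{\hat Z}\,\alpha_H\|A_1\|_F^2\|F_1^{-1/2}\|_{\max}^2}{\|\hat Z\|_2}\cdot \frac{1}{\epsilon_q}\right).
\end{equation*}
Now I substitute: $\alpha_H = \mathcal{O}(1/\hat h)$, $\|F_1^{-1/2}\|_{\max}^2 = \mathcal{O}(1)$ and $\|A_1\|_F\le\|A\|_F$ (Lemma~\ref{LOlemma: bound: Psi F D}), $\kappa_{\hat Z} = \mathcal{O}(\frac{\omega^2}{\gamma_1\mu}(1+\sigma_1^2(A)/\hat h))$ (Lemma~\ref{LOlemma: condition number Zhat}), $\hat h = \gamma_1\sigma_0^2(A)/\omega^2$ so that $\sigma_1^2(A)/\hat h = \kappa_A^2\omega^2/\gamma_1$, and finally $1/\mu = \mathcal{O}(n\bar\omega^2/\epsilon)$ from the termination condition of the II-QIPM (the iterates stay in $\mathcal{N}(\gamma_1,\gamma_2)$ until $x^Ts\le n\epsilon$, i.e.\ $\mu\ge\epsilon/\text{poly}$). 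Collecting the powers of $n$, $\bar\omega$, $1/\epsilon$, $\kappa_A$, $\|A\|_F$, and the $\|\hat A\|_2+\|\hat b\|_2$ from $\|\xi\|_2$, and cancelling the $\|\hat Z\|_2$ and $\|Y\|_2$ denominators against the numerator factors (this is where the bookkeeping is delicate — several $\|\hat Z\|_2^{-1}$ and $\|Y\|_2^{-1}$ factors from $C_{\rm RPRAS}$, the block-encoding cost, and the normalized right-hand side must be matched against $\|\hat Z\|_2\ge\sigma_0(\hat Z)\ge\gamma_1\mu/\omega^2$ and $\|Y\|_2\ge\sigma_0(Y)=\Omega(\mu)$), should yield exactly the claimed $\tilde{\mathcal{O}}_{n,\bar\omega,1/\epsilon}(n^{1.5}\bar\omega^{13}\epsilon^{-6}\kappa_A^5\|A\|_F^6(\|\hat A\|_2+\|\hat b\|_2))$.

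Third, for the total complexity I would multiply the per-iteration bound by the iteration count $\bar k = \tilde{\mathcal{O}}(n)$ — this is the $\mathcal{O}(n\log(1/\epsilon))$ bound for infeasible IPMs cited in the introduction and established for this framework in~\cite{mohammadisiahroudi2022efficient}, and is exactly the extra factor $n^2$ appearing in the theorem once one accounts for... wait, the theorem states $n^2$, so I must be careful: the factor is $\bar k$ times any additional per-iteration readout/reconstruction overhead; I would reconcile $n^2 = n \cdot n$ as iteration count $\tilde{\mathcal{O}}(n)$ times an $\tilde{\mathcal{O}}(n)$ factor hidden in forming the classical Newton direction (Steps 2--4: products like $A^T\Delta y$ cost $\tilde{\mathcal{O}}(n)$ amplified), and state the classical side as $\tilde{\mathcal{O}}(n^3 m)$, coming from the one-time $\mathcal{O}(mn^2)$ computation of $\hat A,\hat b$ (noted after Assumption~\ref{LOassumption: A basis}) times the $\tilde{\mathcal{O}}(n)$ iterations, plus the per-iteration $\mathcal{O}(n^2)$ classical arithmetic for the line search and direction assembly. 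The main obstacle I anticipate is the constant-chasing in the second step: correctly propagating every subnormalization factor, every $\kappa$, and especially the cancellations involving $\|\hat Z\|_2$ and $\|Y\|_2$ (which appear both in numerators via $C_{\rm RPRAS}$ and in denominators, and whose lower bounds scale like $\mu$) so that the final exponent of $\bar\omega$ comes out to exactly $13$ and of $1/\epsilon$ to exactly $6$ — getting the polynomial degrees right, rather than the structure of the argument, is the delicate part.
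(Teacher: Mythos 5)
Your overall architecture (per-iteration cost $=$ condition number $\times$ block-encoding subnormalization $\times$ block-encoding cost $\times$ tomography factor, then multiply by the iteration count) matches the paper, but three concrete steps go wrong. First, the accuracy requirement: the algorithm, and the paper's proof, keep the \emph{original} precision formula~\eqref{LOeq: convergence condition pre}, so the factor entering $n/\epsilon_{\rm QTA}$ is $n^{1.5}\kappa_{M_{\rm MNES}M_B^{-1}}\|v_{\rm MNES}\|_2/\sqrt{\mu}$, which is then bounded via $\kappa_{M_{\rm MNES}M_B^{-1}}\le\kappa_A^3\kappa_D^{1.5}\le\kappa_A^3\omega^6/\mu^3$ and $\|v_{\rm MNES}\|_2/\sqrt{\mu}=\mathcal{O}\bigl(\omega^3(\|\hat A\|_2+\|\hat b\|_2)/\mu\bigr)$; this is precisely where the $\kappa_A^3$, the $(\|\hat A\|_2+\|\hat b\|_2)$, and most of the powers of $\bar\omega$ and $1/\epsilon$ come from. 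Your proposed re-derivation with $\kappa_Y$ and $\|\xi\|_2$ in place of these quantities produces different exponents and cannot ``yield exactly'' the claimed $\bar\omega^{13}/\epsilon^6$. Second, the cancellation of the $\hat h^2\,\sigma_1(\hat Z)^2$ sitting in the denominator: you propose to handle $\|\hat Z\|_2$ via $\sigma_0(\hat Z)\ge\gamma_1\mu/\omega^2$, but $\|\hat Z\|_2=\sigma_1(\hat Z)$ and a lower bound on the \emph{smallest} singular value is far too weak here (it would inject an extra factor of order $\omega^4/(\gamma_1^2\mu^2\hat h^2)$). The paper instead proves $\sigma_1(\hat Z)\ge\sigma_1(B^T\hat H^{-1}B)\ge\tfrac{1}{2\hat h}$ using Lemma~\ref{lemma: interlace}, Lemma~\ref{LOlemma: bound: Psi F D}, and crucially Assumption~\ref{LOassumption: integer data} (integrality of $A$ gives a unit lower bound on a diagonal entry of $A_1^TA_1$); this is the one place integrality is used in the whole paper, and it is absent from your argument.

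Third, the factor $n^2$ in the total is simply the iteration bound of this inexact-infeasible framework, $\mathcal{O}(n^2\log(1/\epsilon))$ iterations inherited from \cite{al2009convergence,mohammadisiahroudi2022efficient}; the $\mathcal{O}(\sqrt{n}\log(1/\epsilon))$ and $\mathcal{O}(n\log(1/\epsilon))$ bounds quoted in the introduction are for feasible and exact infeasible IPMs, respectively. Your attempt to reconcile $n^2$ as $\tilde{\mathcal{O}}(n)$ iterations times an extra $n$ of ``classical readout overhead'' cannot work: the $n^2$ multiplies QRAM queries, and classical work is accounted for separately. Relatedly, the $\tilde{\mathcal{O}}(n^3m)$ classical cost is the $n^2$ iterations times the $\mathcal{O}(nm)$ per-iteration matrix--vector work, not a one-time $\mathcal{O}(mn^2)$ reduction multiplied by the iteration count.
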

\begin{proof}
Let $T_{\rm RPRAS}$ be the cost of implementing the block-encoding of $\Xi$ as in Lemma \ref{LOlemma: BE for RPRAS}. Then, we have
\begin{equation*}
    T_{\rm RPRAS} = \tilde{O}_{n, \epsilon}\left(\kappa_{\hat{Z}}\left(\alpha_H\|A_1\|_F^2 \|F_1^{-1/2}\|_{\max}^2\right)/\|\hat{Z}\|_2\right).
\end{equation*}
Let $T_{\xi}$ be the cost of implementing state-preparation of $\xi$ in equation \eqref{LOsys: RP-RAS} as introduced in \cite{chakraborty2018power}. 
Then, the complexity for QLSA + QTA, that we denote by $T_{\rm iter}$, is 
\begin{equation*}
    \begin{aligned}
    T_{\rm iter} = \tilde{\mathcal{O}}_{n, \kappa_{\hat{Z}}, \kappa_{Y}}\left(\kappa_{Y}\left(C_{\rm RPRAS} T_{\rm RPRAS} \log^2\left(\frac{\kappa}{\epsilon_{QLSA}}\right) + T_{\xi}\right)\frac{n}{\epsilon_{QTA}}\right)
\end{aligned}
\end{equation*}
{queries to QRAM.}
According to Corollary 2 of \cite{chakraborty2018power}, $T_{\xi}$ is $\mathcal{O}\left({\rm polylog} n/\epsilon
 \right)$, which is dominated by $T_{\rm RPRAS}$. 
 Thus we ignore $T_{\xi}$ in the complexity of QLSA + QTA.
Following from Lemma \ref{LOlemma: bound C}, and Lemma \ref{LOlemma: BE Z inv}, we have
\begin{equation*}
    \begin{aligned}
    T_{\rm iter}&= \tilde{\mathcal{O}}_{n,\omega, \frac{1}{\epsilon}}\Bigg(\kappa_{Y} \frac{\|A\|_F^4}{\hat{h} \left\|\hat{Z}\right\|_2  \left\| Y \right\|_2  } \kappa_{\hat{Z}}\left(\alpha_H\|A_1\|_F^2 \|F_1^{-1/2}\|_{\max}^2\right)/\|\hat{Z}\|_2 \frac{n}{\epsilon_{QTA}}\Bigg).
    \end{aligned}
\end{equation*}
Then, using Lemma \ref{LOlemma: bound: Psi F D}, we have
\begin{equation*}
    \begin{aligned}
    T_{\rm iter}&=\tilde{\mathcal{O}}_{n,\omega, \frac{1}{\epsilon}}\Bigg(\kappa_{Y} \frac{\|A\|_F^6}{\hat{h} \left\|\hat{Z}\right\|_2  \left\| Y \right\|_2  } \kappa_{\hat{Z}}\alpha_H/\|\hat{Z}\|_2 \frac{n}{\epsilon_{QTA}}\Bigg).
    \end{aligned}
\end{equation*}
Now we can use Lemma \ref{LOlemma: BE Z hat} to derive
\begin{equation*}
    \begin{aligned}
        T_{\rm iter}&=\tilde{\mathcal{O}}_{n,\omega, \frac{1}{\epsilon}}\Bigg(\kappa_{Y} \frac{\|A\|_F^6}{\hat{h}^2 \left\|\hat{Z}\right\|_2  \left\| Y \right\|_2  } \kappa_{\hat{Z}}/\|\hat{Z}\|_2 \frac{n}{\epsilon_{QTA}}\Bigg).
    \end{aligned}
\end{equation*}
Then, by the definition of the condition number, we have
\begin{equation*}
    \begin{aligned}
        T_{\rm iter}&=\tilde{\mathcal{O}}_{n,\omega, \frac{1}{\epsilon}}\left(\frac{\|A\|_F^6\kappa_{\hat{Z}}}{\hat{h}^2 \sigma_1(\hat{Z})^2   \sigma_0(Y)  } \frac{n}{\epsilon_{QTA}}\right).
    \end{aligned}
\end{equation*}
Following Lemma \ref{lemma: interlace}, we have
\begin{equation*}
    \sigma_1(\hat{Z}) \geq \sigma_1(B^T\hat{H}^{-1}B).
\end{equation*}
By the definition of the largest singular value, we have
\begin{equation*}
    \begin{aligned}
        \sigma_1(B^T\hat{H}^{-1}B) = \max_{\|v\|_2=1} v^T B^T\hat{H}^{-1}B v \geq \frac{1}{\hat{h}}\begin{bmatrix}
            1&0&\cdots&0
        \end{bmatrix} B^TB\begin{bmatrix}
            1&0&\cdots&0
        \end{bmatrix}^T.
    \end{aligned}
\end{equation*}
Combining this with Assumption \ref{LOassumption: integer data} and Lemma \ref{LOlemma: bound: Psi F D}, we have
\begin{equation*}
    \sigma_1(\hat{Z}) \geq \frac{1}{2\hat{h}}.
\end{equation*}
Then, using the required precision set by equation~\eqref{LOeq: convergence condition pre}, we have the {total number of queries to QRAM in } the $k^{\rm th}$ iteration of Algorithm \ref{LOalg: II-QIPM pre} equals to 
\begin{equation*}
    \begin{aligned}
        T_{\rm iter}=&\tilde{\mathcal{O}}_{n,\omega, \frac{1}{\epsilon}}\Bigg(\|A\|_F^6\frac{\kappa_{\hat{Z}}}{  \sigma_0(Y)  } \frac{n}{\epsilon_{QTA}}\Bigg) \\
        =&\tilde{\mathcal{O}}_{n,\omega, \frac{1}{\epsilon}}\Bigg(\|A\|_F^6\frac{n^{1.5}\kappa_{\hat{Z}}}{ \sigma_0(Y)  }  \frac{\kappa_{M_{\rm MNES} M_{B}^{-1}} \|v_{\rm MNES}\|_2}{\sqrt{\mu}}\Bigg) \\
        =& \tilde{\mathcal{O}}_{n,\omega, \frac{1}{\epsilon}}\Bigg(\|A\|_F^6\frac{n^{1.5}}{ \mu  } \frac{1}{\mu } \frac{\omega^2}{\gamma_1}(1 + \frac{1}{\hat{h}}\sigma_1(A)^2) \frac{\kappa_{M_{\rm MNES} M_{B}^{-1}} \|v_{\rm MNES}\|_2}{\sqrt{\mu}}\Bigg)\\
        =& \tilde{\mathcal{O}}_{n,\omega, \frac{1}{\epsilon}}\Bigg(\|A\|_F^6\frac{n^{1.5} \omega^2}{ \mu^2  } \frac{1}{\hat{h}}\sigma_1(A)^2 \frac{\kappa_{M_{\rm MNES} M_{B}^{-1}} \|v_{\rm MNES}\|_2}{\sqrt{\mu}}\Bigg),
    \end{aligned}
\end{equation*}
where the second equality follows from Lemma \ref{LOlemma: condition number Zhat}.
By the definition of $\hat{h}$, we have
\begin{equation*}
    \begin{aligned}
        \frac{1}{\hat{h}}\sigma_1(A)^2 = \frac{\kappa_A^2\omega^2}{\gamma_1}.
    \end{aligned}
\end{equation*}
By the definition of the condition number and the sub-multiplicativity of spectral norm, we have
\begin{equation*}
    \begin{aligned}
        \kappa_{M_{\rm MNES} M_{B}^{-1}} &= \kappa_{M_{B}^T M_{\rm NES}}\\
        &\leq \kappa_{M_B} \kappa_{\rm NES}\\
        &\leq \kappa_A^3 \kappa_D^{1.5},
    \end{aligned}
\end{equation*}
where the last inequality holds due to Assumption~\ref{LOassumption: A basis} and the fact that $D$ is a positive diagonal matrix.
It follows from Lemma~\ref{LOlemma: bound: Psi F D} that
\begin{equation*}
    \begin{aligned}
       \kappa_D &\leq  \frac{\omega^4}{\mu^2}.
    \end{aligned}
\end{equation*}
From the proof of part (ii) of Theorem 4.2 in \cite{mohammadisiahroudi2022efficient}, we have
\begin{equation*}
    \begin{aligned}
        \frac{\|v_{\rm MNES}\|_2}{\sqrt{\mu}}=\mathcal{O}\left(\frac{\omega^3 \left( \left\|\hat{A}  \right\|_2 + \left\|\hat{b}  \right\|_2 \right)}{\mu}\right).
    \end{aligned}
\end{equation*}
Putting all this together, {in each IPM iteration we need}
\begin{equation*}
    \begin{aligned}
        \tilde{\mathcal{O}}_{n,\omega, \frac{1}{\epsilon}}\Bigg(\frac{n^{1.5} \omega^{13}}{ \mu^6  }\kappa_A^5  \|A\|_F^6 \left( \left\|\hat{A}  \right\|_2 + \left\|\hat{b}  \right\|_2 \right)\Bigg)
    \end{aligned}
\end{equation*}
{queries to QRAM.}
Recall the relationship between $\omega$, $\omega^k$ and $\bar{\omega}$ in equation \eqref{LOdef:omega}, and the stopping criteria, $\mu\leq \epsilon$. The total { number of queries to QRAM} is
\[\tilde{\mathcal{O}}_{n, \bar{\omega}, \frac{1}{\epsilon}}\left(n^2\cdot\left( \frac{ n^{1.5}\bar{\omega}^{13}}{\epsilon^6}\kappa_A^5 \|A\|_F^6 \left( \left\|\hat{A}  \right\|_2 + \left\|\hat{b}  \right\|_2 \right)\right)\right).\]
{And the total classical arithmetic operations for building \eqref{LOsys: RP-RAS} and updating variables is
\begin{equation*}
    \tilde{\mathcal{O}}_{n, \bar{\omega}, \frac{1}{\epsilon}}\left(n^3m\right),
\end{equation*}
where $\mathcal{O}(nm)$ is the number of classical arithmetic operations needed for matrix-vector multiplication in the updating process.
}
\end{proof}

The {total number of queries to QRAM} of the preconditioned II-QIPM is better than the original II-QIPM proposed in \cite{mohammadisiahroudi2022efficient} with a improvement factor $\mathcal{O}\left( \frac{\bar{\omega}^6}{\epsilon^3} \right)$ {when $\|A\|_F = \mathcal{O}(\|A\|_2)$}. The improvement comes from two sources: one is the preconditioning and the other is different rescaling. 

Preconditioning mainly improves the complexity of the QLSA, which relies on the condition number of the linear system to be solved and the complexity to build the linear system using block-encoding. Due to preconditioning, both of these two terms are improved by $\mathcal{O}\left( \frac{\bar{\omega}^2}{\epsilon}  \right)$.

Rescaling mainly affects the complexity of QTA, which is almost linear with respect to the reciprocal of the accuracy of QTA. The accuracy of QTA is determined by the convergence condition and the residual of \eqref{LOsys: mnes}, see Lemma \ref{LOlemma: convergence condition}. In \cite{mohammadisiahroudi2022efficient}, they solve \eqref{LOsys: mnes} to get $z$ directly. They then analyze the residual of \eqref{LOsys: mnes} to determine the required accuracy of QTA, which is linear in the reciprocal of the condition number of \eqref{LOsys: mnes}.

In our algorithm, we solve \eqref{LOsys: RP-RAS} to get $\Delta y$ and then plug $\Delta y$ into \eqref{LOsys: mnes} using \eqref{LOeq: delta y and z}, which gives a linear system different from, but similar to \eqref{LOsys: mnes}. Similar analysis shows that the required accuracy of our QTA is linear in the reciprocal of the condition number of the different linear system, whose condition number is better than \eqref{LOsys: mnes} by $\mathcal{O}\left(\frac{\bar{\omega}^2}{\epsilon} \right)$.
The total improvement is the product of the improvements from these different sources.

%%%%%%%%%%%%%%%%%%%%%%%%%%%%%%%%%%%%%%%%%%%%%%%%%%%%%
\section{Conclusion}\label{sec: conclusion}
%%%%%%%%%%%%%%%%%%%%%%%%%%%%%%%%%%%%%%%%%%%%%%%%%%%%%

To mitigate the negative implication that the complexity of QLSA has linear dependence on condition number, and that the condition number of the Newton systems in II-QIPMs tend to infinity as optimality is approached, a preconditioned II-QIPM is proposed in this work. 
With the preconditioning method introduced by Chai and Toh in \cite{chai2007preconditioning}, we prove that the condition numbers in II-QIPMs are improved from $\mathcal{O}(1/\mu^2)$ to $\mathcal{O}(1/\mu)$. In each II-QIPM iteration, {the computational complexity dependence on $n$ comparable to the $\mathcal{O}(n^2)$ in \cite{mohammadisiahroudi2022efficient} and comparable to the {computational complexity of matrix-vector multiplication} in conjugate gradient methods for solving linear systems when $\|A\|_F=\mathcal{O}(\|A\|_2)$}.  
The complexity dependence on $\epsilon$ is $\mathcal{O}(1/\epsilon^6)$, which is worse than most classical II-IPMs but is quadratically better than the II-QIPM in \cite{mohammadisiahroudi2022efficient}. It is possible to further improve the dependence on $\epsilon$ by using the iterative refinement methods as discussed in \cite{mohammadisiahroudi2022efficient} but it is out of the scope of this paper.

\section*{Declarations}

\begin{itemize}
\item Funding: This work was supported by Defense Advanced Research Projects Agency as part of the project W911NF2010022: {\em {The Quantum
Computing Revolution and Optimization: Challenges and Opportunities}.} {This work was also supported by National Science Foundation CAREER DMS-2143915.} {This research also used resources of the Oak Ridge Leadership Computing Facility, which is a DOE Office of Science User Facility supported under Contract DE-AC05-00OR22725.}

\item Competing interests: The corresponding author is an editor for the journal Computation Optimization and Applications.
\item Ethics approval: Not applicable.
\item Consent to participate: Not applicable.
\item Consent for publication: Not applicable.
\item Availability of data and materials: Not applicable.
\item Code availability: Not applicable.
\item Authors' contributions: Not applicable.
\end{itemize}

\begin{appendices}

\section{}\label{secA1}
The following lemma is a special case of Theorem 4 from \cite{smith1992some} by considering positive semi-definite matrices instead of Hermitian matrices.
\begin{lemma}[Special case of Theorem 4 in \cite{smith1992some}]\label{lemma: interlace}
Consider two symmetric positive semi-definite matrices $(M_1, M_2) \in \mathbb{R}^{n\times n}\times \mathbb{R}^{n\times n}$, and let ${\rm rank}(M_2)=k$. Then their singular values satisfy
\begin{enumerate}
    \item $\sigma_i(M_1 + M_2)\geq \sigma_i(M_1)$, for $i=1,2,\dots,n$, and
    \item $\sigma_{i+k}(M_1 + M_2) \leq \sigma_i(M_1)$, for $i=1,2,\dots, n-k$.
\end{enumerate}
\end{lemma}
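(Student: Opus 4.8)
The plan is to reduce everything to eigenvalues. Since $M_1$ and $M_2$ are symmetric positive semi-definite, their singular values coincide with their eigenvalues, which I sort in non-increasing order $\lambda_1(M)\ge\lambda_2(M)\ge\cdots\ge\lambda_n(M)\ge 0$; so it suffices to prove the two claimed inequalities with every $\sigma_i$ replaced by $\lambda_i$. The whole argument will be driven by the Courant--Fischer min--max characterizations of the eigenvalues of a symmetric matrix, namely $\lambda_i(M)=\max_{\dim V=i}\min_{0\neq v\in V} v^TMv/\|v\|_2^2$ and, equivalently, $\lambda_i(M)=\min_{\dim V=n-i+1}\max_{0\neq v\in V} v^TMv/\|v\|_2^2$.

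For part (i), I would note that $M_2\succeq 0$ implies $M_1+M_2\succeq M_1$ in the Loewner order, i.e. $v^T(M_1+M_2)v\ge v^TM_1v$ for every $v$. Plugging this pointwise inequality into the max--min formula, for any fixed subspace $V$ the inner minimum over $v\in V$ can only increase when passing from $M_1$ to $M_1+M_2$, and hence so does the outer maximum over $i$-dimensional $V$; this gives $\lambda_i(M_1+M_2)\ge\lambda_i(M_1)$ for all $i=1,\dots,n$.

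For part (ii), I would exploit that $\operatorname{rank}(M_2)=k$. Write $M_2=\sum_{j=1}^k\mu_j u_ju_j^T$ with $\mu_j>0$ and set $U=\mathrm{span}\{u_1,\dots,u_k\}$, so that $\dim U^\perp\ge n-k$ and $v^TM_2v=0$ for all $v\in U^\perp$. Let $V_i$ be an $(n-i+1)$-dimensional subspace realizing the min--max value for $\lambda_i(M_1)$, i.e. $\max_{0\neq v\in V_i}v^TM_1v/\|v\|_2^2=\lambda_i(M_1)$ (for instance the span of the eigenvectors of $M_1$ for $\lambda_i,\dots,\lambda_n$). Then $W:=V_i\cap U^\perp$ has dimension at least $(n-i+1)+(n-k)-n=n-i-k+1$, so $W$ (or, if it is larger, a subspace of it of exactly that dimension) is admissible in the min--max formula for $\lambda_{i+k}(M_1+M_2)$; and since $W\subseteq U^\perp$ we have $v^T(M_1+M_2)v=v^TM_1v\le\lambda_i(M_1)\|v\|_2^2$ on $W$. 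Taking the minimum over admissible subspaces yields $\lambda_{i+k}(M_1+M_2)\le\lambda_i(M_1)$ for $i=1,\dots,n-k$.

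The only delicate points are bookkeeping: matching the eigenvalue ordering convention to the statement, selecting the correct form of Courant--Fischer (the max--min form for part (i), the min--max form for part (ii)), and getting the subspace-dimension arithmetic right so that the test subspace constructed in part (ii) is exactly of the dimension the min--max formula requires. There is no genuine analytic obstacle; as an alternative one could simply invoke Weyl's inequalities, using $\lambda_i(M_1+M_2)\ge\lambda_i(M_1)+\lambda_n(M_2)\ge\lambda_i(M_1)$ for part (i) and $\lambda_{i+k}(M_1+M_2)\le\lambda_i(M_1)+\lambda_{k+1}(M_2)=\lambda_i(M_1)$ for part (ii), the last equality holding because $\operatorname{rank}(M_2)=k$ forces $\lambda_{k+1}(M_2)=0$. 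I would, however, prefer the self-contained min--max argument above.
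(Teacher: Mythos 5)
Your proof is correct. Note that the paper itself does not prove this lemma at all: it simply states it as a special case of Theorem~4 in the cited reference of Smith (1992), restricted from Hermitian to real symmetric positive semi-definite matrices. So your self-contained argument is necessarily a different route from the paper's, which is a bare citation. What you give is the standard Courant--Fischer proof of Weyl's perturbation inequalities $\lambda_i(M_1+M_2)\geq\lambda_i(M_1)+\lambda_n(M_2)$ and $\lambda_{i+j-1}(M_1+M_2)\leq\lambda_i(M_1)+\lambda_j(M_2)$, specialized via $\lambda_n(M_2)\geq 0$ and $\lambda_{k+1}(M_2)=0$; the reduction from singular values to eigenvalues is legitimate since for symmetric PSD matrices the two coincide, and the subspace-dimension count $\dim(V_i\cap U^\perp)\geq n-i-k+1$ in part~(ii) is exactly what the min--max formula for $\lambda_{i+k}$ requires (passing to a subspace of $W$ of precisely that dimension only shrinks the inner maximum, so the bound survives). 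The one benefit of your version over the paper's is that it makes the appendix self-contained and makes explicit where the rank hypothesis on $M_2$ enters; the cost is length for a fact the authors evidently regard as standard.
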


For block matrices, we have the following lemma as a special case of Theorem 5 in \cite{smith1992some} by considering $i=1$ and $i=n-r$.
\begin{lemma}\label{lemma: interlace block matrix}
Let $M$ be a symmetric positive definite matrix with $2\times 2$ block structure,
\begin{equation*}
    \begin{aligned}
        M = \begin{bmatrix}
        M_{11} & M_{12}\\M_{21}&M_{22}
        \end{bmatrix},
    \end{aligned}
\end{equation*}
where $M_{11}$ and $M_{22}$ are nonsingular. Then, the singular values of $M$ and the singular values of its Schur complement $(M/M_{22})$ satisfy
\begin{equation*}
    \begin{aligned}
    \sigma_{0}(M)\leq \sigma_{0}(M/M_{22})\leq \sigma_{1}(M/M_{22})\leq \sigma_{1}(M).
\end{aligned}
\end{equation*}
\end{lemma}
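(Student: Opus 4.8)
The plan is to reduce everything to the fact that $M$ is symmetric positive definite, so that its singular values coincide with its eigenvalues, and the same holds for the Schur complement $(M/M_{22}) = M_{11} - M_{12}M_{22}^{-1}M_{12}^{T}$, which is itself symmetric positive definite whenever $M$ and $M_{22}$ are (and $M_{22}\succ0$ automatically, being a principal submatrix of $M$). Thus $\sigma_{1}(M)=\lambda_{\max}(M)$, $\sigma_{0}(M)=\lambda_{\min}(M)$, and likewise for $(M/M_{22})$; the middle inequality $\sigma_{0}(M/M_{22})\le\sigma_{1}(M/M_{22})$ is then immediate, and only the two outer inequalities require work. Both will follow from a single variational identity.

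First I would record the minimization form of the Schur complement: for every conformable vector $x$, $x^{T}(M/M_{22})\,x = \min_{y}\, \begin{bmatrix} x \\ y \end{bmatrix}^{T} M \begin{bmatrix} x \\ y \end{bmatrix}$, with minimizer $y=-M_{22}^{-1}M_{12}^{T}x$. This is a one-line computation: set the $y$-gradient of the quadratic form to zero (equivalently complete the square in $y$), solve, and substitute, using $M_{21}=M_{12}^{T}$. In passing this identity also re-proves $(M/M_{22})\succ0$, since the right-hand side is positive for $x\neq0$.

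For the lower bound, fix $\|x\|=1$ and bound the quadratic form of $M$ below by $\lambda_{\min}(M)$ times the squared norm: $x^{T}(M/M_{22})\,x = \min_{y}\begin{bmatrix} x \\ y \end{bmatrix}^{T} M \begin{bmatrix} x \\ y \end{bmatrix} \ge \lambda_{\min}(M)\,\min_{y}\big(1+\|y\|^{2}\big) = \lambda_{\min}(M)$, and minimizing the left-hand side over unit $x$ gives $\sigma_{0}(M/M_{22})=\lambda_{\min}(M/M_{22})\ge\lambda_{\min}(M)=\sigma_{0}(M)$. For the upper bound, again fix $\|x\|=1$ and evaluate the inner minimum at $y=0$: $x^{T}(M/M_{22})\,x \le \begin{bmatrix} x \\ 0 \end{bmatrix}^{T} M \begin{bmatrix} x \\ 0 \end{bmatrix} = x^{T}M_{11}x \le \lambda_{\max}(M)$, since $\begin{bmatrix} x \\ 0\end{bmatrix}$ is a unit vector; maximizing over unit $x$ gives $\sigma_{1}(M/M_{22})=\lambda_{\max}(M/M_{22})\le\lambda_{\max}(M)=\sigma_{1}(M)$.

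I do not expect a genuine obstacle: the only care needed is bookkeeping, namely checking that $(M/M_{22})$ is symmetric positive definite so the singular-value/eigenvalue identification is legitimate, and stating the minimization identity with $M_{21}=M_{12}^{T}$. Alternatively one could invoke Theorem~5 of \cite{smith1992some} with $i=1$ and $i=n-r$ as the paper announces, or derive the lower bound from the fact that the principal block of $M^{-1}$ in the position of $M_{11}$ equals $(M/M_{22})^{-1}$, combined with Cauchy interlacing applied to the symmetric positive definite matrix $M^{-1}$; but the variational argument above is self-contained and equally short.
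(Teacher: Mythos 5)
Your proof is correct, but it takes a genuinely different route from the paper: the paper offers no self-contained argument at all, simply stating the lemma as the special case $i=1$, $i=n-r$ of Theorem~5 in Smith's interlacing paper \cite{smith1992some}. Your variational argument is sound. The key identity $x^{T}(M/M_{22})x=\min_{y}\bigl[\begin{smallmatrix}x\\ y\end{smallmatrix}\bigr]^{T}M\bigl[\begin{smallmatrix}x\\ y\end{smallmatrix}\bigr]$ is verified by completing the square in $y$ (legitimate because $M_{22}\succ0$ as a principal submatrix of $M\succ0$), it immediately yields $(M/M_{22})\succ0$ so that singular values coincide with eigenvalues on both sides, and the two outer inequalities then follow from bounding the inner quadratic form below by $\lambda_{\min}(M)(\|x\|^{2}+\|y\|^{2})$ and above by evaluating at $y=0$. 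Note also that the paper's $\sigma_{0}$ means the smallest \emph{nonzero} singular value, which for positive definite matrices is just $\lambda_{\min}$, so your identification is consistent with the paper's conventions. What each approach buys: the citation to Smith's theorem gives, in one stroke, the full interlacing of \emph{all} singular values of the Schur complement between those of $M$ (the lemma as stated only needs the two extremes), whereas your argument is elementary, self-contained, and makes transparent exactly which structural facts (symmetry, positive definiteness, the extremal characterization of the Schur complement) are being used. Either is acceptable; if the paper wanted a proof in the appendix rather than a pointer, yours would be the natural one to include.
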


From Example 5.6.6 in \cite{horn2012matrix}, it is known that the largest singular value of any matrix coincides with its spectral norm. So the following lemma is equivalent to the submultiplicativity of spectral norm of matrices \cite{horn2012matrix}.
\begin{lemma}\label{lemma: largest singular value of matrix product}
Let $M_1\in \mathbb{R}^{m\times n}$ and $M_2 \in \mathbb{R}^{n \times l}$, then the largest singular value of $M_1M_2$ is bounded from above by the largest singular values of $M_1$ and $M_2$, i.e.,
\begin{equation*}
    \sigma_1(M_1M_2) \leq \sigma_1(M_1) \sigma_1(M_2).
\end{equation*}
\end{lemma}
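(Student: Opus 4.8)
`\textbf{Proof proposal.}

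The plan is to reduce the inequality to the statement that the operator ($\ell_2\!\to\!\ell_2$) norm is submultiplicative, using the fact — cited in the excerpt as Example 5.6.6 of \cite{horn2012matrix} — that for any matrix $M$ one has $\sigma_1(M) = \|M\|_2$, where $\|M\|_2 = \sup_{\|v\|_2 = 1}\|Mv\|_2$ is the spectral (operator) norm. So the entire content of the lemma is the single line $\|M_1 M_2\|_2 \le \|M_1\|_2\,\|M_2\|_2$, which I would prove directly from the definition rather than invoke as a black box, to keep the appendix self-contained.

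First I would fix the dimensions: $M_1 \in \mathbb{R}^{m\times n}$, $M_2 \in \mathbb{R}^{n\times l}$, so $M_1 M_2 \in \mathbb{R}^{m\times l}$ is well defined. Then for an arbitrary $v \in \mathbb{R}^l$ with $\|v\|_2 = 1$, I would write $\|M_1 M_2 v\|_2 = \|M_1 (M_2 v)\|_2$. Applying the definition of $\sigma_1(M_1) = \|M_1\|_2$ to the vector $M_2 v \in \mathbb{R}^n$ gives $\|M_1(M_2 v)\|_2 \le \sigma_1(M_1)\,\|M_2 v\|_2$, and applying the definition of $\sigma_1(M_2) = \|M_2\|_2$ to $v$ gives $\|M_2 v\|_2 \le \sigma_1(M_2)\,\|v\|_2 = \sigma_1(M_2)$. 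Chaining these yields $\|M_1 M_2 v\|_2 \le \sigma_1(M_1)\,\sigma_1(M_2)$ for every unit vector $v$; taking the supremum over such $v$ gives $\sigma_1(M_1 M_2) = \|M_1 M_2\|_2 \le \sigma_1(M_1)\,\sigma_1(M_2)$, which is the claim. One small degenerate case worth a remark: if either factor is the zero matrix then both sides are $0$ and the inequality holds trivially, so the argument above (which implicitly uses $\|M_2 v\|_2 \le \sigma_1(M_2)$, valid regardless) covers it.

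There is essentially no obstacle here: the only thing to be careful about is to state precisely which characterization of $\sigma_1$ is being used — the max-singular-value-equals-operator-norm identity — and to make sure the supremum defining $\|M_1M_2\|_2$ is actually attained or at least legitimately bounded, which it is since the unit sphere in $\mathbb{R}^l$ is compact and $v \mapsto \|M_1 M_2 v\|_2$ is continuous. If one prefers to avoid even mentioning operator norms, an alternative is the SVD-based route: write $M_1 M_2 = U \Sigma W^\top$ and bound $\sigma_1(M_1 M_2) = \max_{\|u\|=\|w\|=1} u^\top M_1 M_2 w$ by inserting $\|M_1^\top u\|_2 \le \sigma_1(M_1)$ and $\|M_2 w\|_2 \le \sigma_1(M_2)$ via Cauchy–Schwarz; but the operator-norm argument is shorter and matches the hint already given in the excerpt, so that is the one I would write up.
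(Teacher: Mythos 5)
Your proposal is correct and takes the same route as the paper: identify $\sigma_1$ with the spectral norm (the paper cites Example 5.6.6 of Horn and Johnson) and invoke submultiplicativity, which the paper simply cites while you write out the standard one-line argument $\|M_1M_2v\|_2\le\sigma_1(M_1)\|M_2v\|_2\le\sigma_1(M_1)\sigma_1(M_2)$ followed by taking the supremum over unit vectors. No gaps.
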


Below we prove two lemmas involving the smallest nonzero singular value.
\begin{lemma}\label{lemma: least nonzero singular value of matrix product}
Let $M_1 \in \mathbb{R}^{m\times n}$ be a nonzero matrix and $M_2\in \mathbb{S}_{++}^{n\times n}$ be a diagonal matrix. The smallest nonzero singular value of $M_1M_2$ is bounded by the smallest nonzero singular value of $M_1$ and the smallest singular value, i.e., diagonal element, of $M_2$:
\begin{equation*}
    \sigma_0(M_1M_2) \geq \sigma_0(M_1) \sigma_0(M_2).
\end{equation*}
\end{lemma}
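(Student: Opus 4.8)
The plan is to reduce to Gram matrices and invoke the interlacing inequality of Lemma~\ref{lemma: interlace}. Since $M_2\in\mathbb{S}_{++}^{n\times n}$ it is nonsingular, so right-multiplication by $M_2$ preserves rank; I set $r={\rm rank}(M_1)={\rm rank}(M_1M_2)\geq 1$, where $r\geq 1$ because $M_1\neq 0$. Using that the nonzero singular values of any matrix $N$ are the positive square roots of the nonzero eigenvalues of $NN^\top$, and that the common rank of the symmetric positive semidefinite matrices $M_1M_2(M_1M_2)^\top$ and $M_1M_1^\top$ is exactly $r$, the smallest nonzero singular value of each of them is its $r$-th largest singular value, so
\begin{equation*}
    \sigma_0(M_1M_2)^2=\sigma_r\!\left(M_1M_2M_2^\top M_1^\top\right)\qquad\text{and}\qquad\sigma_0(M_1)^2=\sigma_r\!\left(M_1M_1^\top\right).
\end{equation*}

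Next I would exploit that $M_2$ is diagonal and positive definite: its least diagonal entry is $\sigma_0(M_2)>0$, hence $M_2M_2^\top=M_2^2\succeq\sigma_0(M_2)^2 I$. Conjugation by $M_1$ preserves the Loewner order, which lets me write
\begin{equation*}
    M_1M_2M_2^\top M_1^\top=\sigma_0(M_2)^2\,M_1M_1^\top+M_1\!\left(M_2^2-\sigma_0(M_2)^2 I\right)\!M_1^\top,
\end{equation*}
exhibiting $M_1M_2M_2^\top M_1^\top$ as a sum of the two symmetric positive semidefinite matrices $\sigma_0(M_2)^2 M_1M_1^\top$ and $M_1(M_2^2-\sigma_0(M_2)^2 I)M_1^\top$. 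Applying part~1 of Lemma~\ref{lemma: interlace} to this pair at the index $r$ yields $\sigma_r\!\left(M_1M_2M_2^\top M_1^\top\right)\geq\sigma_r\!\left(\sigma_0(M_2)^2 M_1M_1^\top\right)=\sigma_0(M_2)^2\,\sigma_r\!\left(M_1M_1^\top\right)$. Combining this with the two identities above gives $\sigma_0(M_1M_2)^2\geq\sigma_0(M_2)^2\,\sigma_0(M_1)^2$, and taking square roots finishes the proof.

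I expect the only delicate point to be the bookkeeping that, for both Gram matrices, the $r$-th largest singular value is exactly the smallest nonzero one; this is precisely where nonsingularity of $M_2$ is used, as it forces $M_1$, $M_1M_2$, $M_1M_2(M_1M_2)^\top$ and $M_1M_1^\top$ to have the common rank $r$. If one prefers to avoid Gram matrices, an alternative of comparable length uses the characterization $\sigma_0(N)=\min\{\|Nx\|_2:\|x\|_2=1,\ x\perp\ker N\}$, together with the identity $\ker\!\big((M_1M_2)^\top\big)=\ker\!\big(M_1^\top\big)$ (again valid because $M_2$ is nonsingular), and chains $\|M_2M_1^\top y\|_2\geq\sigma_0(M_2)\,\|M_1^\top y\|_2\geq\sigma_0(M_2)\,\sigma_0(M_1)$ for unit vectors $y$ in that common subspace.
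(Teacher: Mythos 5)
Your proof is correct, and it takes a genuinely different route from the paper's. The paper works directly with the Courant--Fischer min--max characterization of $\sigma_k(M_1M_2)$, factors the Rayleigh quotient $\frac{x^\top M_1M_2M_2^\top M_1^\top x}{x^\top x}$ into a ratio controlled by $M_2$ times a ratio controlled by $M_1$, bounds the first factor below by $\sigma_0(M_2M_2^\top)$ whenever $M_1^\top x\neq 0$, and then closes with a rank argument showing $\sigma_k(M_1)=\sigma_0(M_1)$. You instead pass to the Gram matrix, write
\begin{equation*}
M_1M_2M_2^\top M_1^\top=\sigma_0(M_2)^2\,M_1M_1^\top+M_1\bigl(M_2^2-\sigma_0(M_2)^2I\bigr)M_1^\top
\end{equation*}
as a sum of two positive semidefinite matrices, and invoke part~1 of Lemma~\ref{lemma: interlace} at the index $r={\rm rank}(M_1)={\rm rank}(M_1M_2)$. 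This buys you two things: the subspace optimization and the ``$M_1^\top x\neq 0$ at the optimizer'' point --- which is the slightly delicate spot in the paper's argument --- are absorbed into the already-proved interlacing lemma, and the rank bookkeeping is done once up front via nonsingularity of $M_2$ rather than at the end. What the paper's direct min--max computation buys is self-containedness: it does not rely on Lemma~\ref{lemma: interlace}. Both arguments in fact only use $M_2\succ 0$ (via $M_2M_2^\top\succeq\sigma_0(M_2)^2I$), so neither genuinely needs diagonality; your variational alternative at the end is also valid and is essentially an unpacked form of the same idea.
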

\begin{proof}
Without loss of generality, we can assume $M_1M_2$ has $k$ nonzero singular values:
\begin{equation*}
    \sigma_0(M_1M_2) = \sigma_k(M_1M_2).
\end{equation*}
Let $\mathbb{S}\subseteq \mathbb{R}^n$ be a subspace.
According to Courant-Fischer Min-Max theorem \cite{horn2012matrix}, it follows that
\begin{align*}
    \sigma_k(M_1M_2) &= \sqrt{\min_{\mathbb{S}:\dim(\mathbb{S}) = m-k+1} \max_{x\in \mathbb{S}\setminus{\{0\}}}\frac{x^\top M_1M_2(M_1M_2)^\top x}{x^\top x} }.
\end{align*}
It can be rewritten as
\begin{equation*}
    \sigma_k(M_1M_2) = \sqrt{\min_{\mathbb{S}:\dim(\mathbb{S}) = m-k+1} \max_{x\in \mathbb{S}\setminus{\{0\}}} \frac{(M_1^\top x)^\top M_2M_2^\top M_1^\top x}{(M_1^\top x)^\top M_1^\top x} \frac{(M_1^\top x)^\top M_1^\top x}{x^\top x} }.
\end{equation*}
Notice that
\begin{equation*}
    \frac{(M_1^\top x)^\top M_2M_2^\top M_1^\top x}{(M_1^\top x)^\top M_1^\top x} \geq \sigma_0(M_2M_2^\top)
\end{equation*}
as long as $M_1^\top x \neq 0$, which is true for $\sigma_k(M_1M_2)$ since $M_1M_2$ has $k$ nonzero singular values. So
\begin{align*}
    \sigma_k(M_1M_2) &\geq \sqrt{\min_{\mathbb{S}:\dim(\mathbb{S}) = m-k+1} \max_{x\in \mathbb{S}\setminus{\{0\}}} \frac{(M_1^\top x)^\top M_1^\top x}{x^\top x} \sigma_0(M_2M_2^\top)}\\
                 &= \sigma_k(M_1)\sigma_0(M_2).
\end{align*}
On one hand, if $M_1$ has more than $k$ nonzero singular values, then we can prove that $M_1M_2$ has more than $k$ nonzero singular values, which violates the assumption. On the other hand, if $\sigma_k(M_1) = 0$, then we have found an $x\in \mathbb{S}\setminus{\{0\}}$ with $\dim(\mathbb{S}) = m-k$ such that $M_1^\top x = 0$, which means $\sigma_0(M_1M_2) = \sigma_k(M_1M_2) = 0$ and violates the definition of $\sigma_0(M_1M_2)$. So $\sigma_0(M_1) = \sigma_k(M_1)$ and $\sigma_0(M_1M_2)\geq \sigma_0(M_1) \sigma_0(M_2)$.
\end{proof}

\begin{lemma}\label{lemma: nonzero singular value of block matrix with zero wrap}
Let $M_1\in \mathbb{S}_{++}^n$ and $M_2\in \mathbb{S}_{(n+l)\times (n+l)}$ with $l\in \mathbb{Z}_{+}$ and 
$$
M_2 = \begin{bmatrix}
       M_1 & 0\\
       0 & 0
\end{bmatrix}.
$$ Then the smallest nonzero singular value of $M_2$ equals to the smallest nonzero singular value of $M_1$, i.e., 
\begin{equation*}
    \sigma_0(M_1) = \sigma_0(M_2),
\end{equation*}
and the largest singular value of $M_2$ equals to the largest singular value of $M_1$, i.e.,
\begin{equation*}
    \sigma_1(M_1) = \sigma_1(M_2).
\end{equation*}
\end{lemma}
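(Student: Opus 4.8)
The plan is to reduce everything to the block-diagonal structure of $M_2$ together with the fact that a symmetric positive definite matrix has only strictly positive singular values. First I would write the spectral decomposition of $M_1$ as $M_1 = U\Sigma U^T$ with $U$ orthogonal and $\Sigma = {\rm diag}\left(\sigma_1(M_1),\dots,\sigma_n(M_1)\right)$; since $M_1 \in \mathbb{S}_{++}^n$, this is simultaneously a singular value decomposition and all $n$ diagonal entries of $\Sigma$ are strictly positive, so in particular $\sigma_0(M_1) = \sigma_n(M_1)$. Then I would observe that
\begin{equation*}
    M_2 = \begin{bmatrix} U & 0 \\ 0 & I_l \end{bmatrix}\begin{bmatrix} \Sigma & 0 \\ 0 & 0_{l\times l} \end{bmatrix}\begin{bmatrix} U & 0 \\ 0 & I_l \end{bmatrix}^T,
\end{equation*}
and since both outer factors are orthogonal, this is a singular value decomposition of $M_2$ once the diagonal of the middle factor is permuted into non-increasing order. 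Hence the multiset of singular values of $M_2$ is exactly $\{\sigma_1(M_1),\dots,\sigma_n(M_1)\}$ together with $l$ copies of $0$.

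Next I would read off the two claimed identities from this multiset. Because all $\sigma_i(M_1)$ are strictly positive, the nonzero singular values of $M_2$ are precisely $\sigma_1(M_1)\geq\cdots\geq\sigma_n(M_1)$, so the smallest nonzero singular value of $M_2$ is $\sigma_n(M_1)$, which equals $\sigma_0(M_1)$ by the positive-definiteness remark above; this gives $\sigma_0(M_2) = \sigma_0(M_1)$. For the largest singular value, since $M_1 \ne 0$ we have $\sigma_1(M_1) > 0$, which dominates the value $0$ contributed by the zero block, so the maximum over the combined multiset is $\sigma_1(M_1)$, i.e.\ $\sigma_1(M_2) = \sigma_1(M_1)$.

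There is no substantive obstacle here; the statement is essentially bookkeeping. The only points that require care are (i) invoking that a symmetric positive definite matrix has singular values equal to its positive eigenvalues, which is what legitimizes writing $\sigma_0(M_1) = \sigma_n(M_1)$ and guarantees that appending zeros does not change the smallest \emph{nonzero} singular value, and (ii) being explicit that reordering the diagonal entries of the middle factor (and correspondingly permuting columns of the orthogonal factors) yields a genuine SVD in the non-increasing convention used throughout the paper, so that the displayed multiset really is the singular value list of $M_2$. Alternatively, one could avoid the SVD entirely and argue directly from the Courant–Fischer characterization used in Lemma~\ref{lemma: least nonzero singular value of matrix product}, but the block-SVD argument is the shortest route.
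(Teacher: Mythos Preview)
Your proof is correct and is precisely the approach the paper takes: the paper's own proof is the one-line remark that the results follow by checking the SVD of $M_1$ and $M_2$, and your argument is exactly that check written out in full.
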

\begin{proof}
The results are straightforward by checking the SVD of $M_1$ and $M_2$.
\end{proof}

\end{appendices}

%%===========================================================================================%%
%% If you are submitting to one of the Nature Portfolio journals, using the eJP submission   %%
%% system, please include the references within the manuscript file itself. You may do this  %%
%% by copying the reference list from your .bbl file, paste it into the main manuscript .tex %%
%% file, and delete the associated \verb+\bibliography+ commands.                            %%
%%===========================================================================================%%

\bibliographystyle{abbrv} 
\bibliography{bib}% common bib file
%% if required, the content of .bbl file can be included here once bbl is generated
%%\input sn-article.bbl

\end{document}